\newcommand\absdot[2]{
	% Make a dot of fixed absolute size.
	\node at #1 {\normalsize $\bullet$};
	\node at #1 [below] {$#2$};
}
\newcommand{\plotperm}[1]{
	\foreach \j [count=\i] in {#1} {
		\absdot{(\i,\j)}{};
	};
}
\newcommand{\plotpermgraph}[1]{
	\foreach \j [count=\i] in {#1} {
		\foreach \b [count=\a] in {#1} {
			% Draw edge from (a,b) to (i,j) if they form an inversion.
			\ifthenelse{\a<\i \AND \b>\j}{\draw (\a,\b)--(\i,\j);}{}
		};
	};
	\plotperm{#1};
}
\newtheorem{theorem}{Theorem}
\newtheorem{corollary}[theorem]{Corollary}
\newtheorem{lemma}[theorem]{Lemma}
\newtheorem{proposition}[theorem]{Proposition}
\newtheorem{observation}[theorem]{Observation}
\newtheorem{question}[theorem]{Question}
\theoremstyle{remark}
\newtheorem{definition}[theorem]{Definition}
\newtheorem{remark}[theorem]{Remark}
\DeclareMathOperator{\Grid}{Grid}
\newcommand{\C}{\ensuremath{\mathcal{C}}\xspace}
\newcommand{\D}{\ensuremath{\mathcal{D}}\xspace}
\newcommand{\I}{\ensuremath{\mathcal{I}}\xspace}
\renewcommand{\S}{\ensuremath{\mathfrak{S}}} % Changed to frak because of clash with SSS
\newcommand{\T}{\ensuremath{\mathcal{T}}\xspace}
\newcommand{\X}{\ensuremath{\mathcal{X}}\xspace}
\newcommand{\Av}{\mathop{\mathrm{Av}}}
\newcommand{\sortS}{\ensuremath{\mathbf{S}}\xspace}
\newcommand{\sortB}{\ensuremath{\mathbf{B}}\xspace}
\newcommand{\sortQ}{\ensuremath{\mathbf{Q}}\xspace}
\newcommand{\rank}{\mathop{rank}\xspace}
\newcommand{\card}{\mathop{card}\xspace}
\newcommand{\EFeq}{\sim}
\renewcommand{\th}{^{\mbox{\scriptsize th}}}
\newcommand{\inc}{\iota} % For increasing permutations - not keen,  but have never come up with anything better
\newcommand{\dec}{\delta} % For increasing permutations - not keen,  but have never come up with anything better
\newcommand{\inter}{\ensuremath{\mathsf{TO(TO\hspace*{-0.2em}+\hspace*{-0.2em}OB)}}\xspace}
\newcommand{\TOTO}{\ensuremath{\mathsf{TOTO}}\xspace} 
\newcommand{\TOOB}{\ensuremath{\mathsf{TOOB}}\xspace} 
\newcommand{\TOTOC}{\ensuremath{\mathsf{TOTO}(\mathcal{C})}\xspace} 
\renewcommand{\le}{\leqslant}
\renewcommand{\leq}{\leqslant}
\def\si{\sigma}
\def\la{\lambda}
\def\La{\Lambda}
\def\CCC{\mathcal{C}}
\def\SSS{\mathcal{S}} % same as \S 
\def\MMM{\mathcal{M}} % same as \M
\def\DDD{\mathcal{D}}
\def\BA{\bm{\mathcal{B}}}
\DeclareMathOperator{\ct}{ct}
\DeclareMathOperator{\grow}{gr}
\DeclareMathOperator{\qdepth}{qd}
\def\et{\, \wedge \,}
\def\ou{\, \vee \,}
\newcommand\revision[1]{{#1}}
\title{Two first-order logics of permutations}
\author{Michael Albert, Mathilde Bouvel, Valentin F\'eray}
\date{}
\begin{document}
\maketitle

\begin{abstract}
  We consider two orthogonal points of view on finite permutations,
  seen as pairs of linear orders (corresponding to the usual one line representation of permutations as words) or seen as bijections (corresponding to the
  algebraic point of view).
  For each of them, we define a corresponding first-order logical theory,
  that we call $\mathsf{TOTO}$ (Theory Of Two Orders)
  and $\mathsf{TOOB}$ (Theory Of One Bijection) respectively.
  We consider various expressibility questions in these theories.

  Our main results go in three different direction.
  First, we prove that, for all $k \ge 1$, the set of $k$-stack sortable permutations in the sense of West
  is expressible in $\mathsf{TOTO}$, and that a logical sentence describing this set can be obtained automatically.
  Previously, descriptions of this set were only known for $k \le 3$.
  Next, we characterize permutation classes inside which
  it is possible to express in $\mathsf{TOTO}$ that some given points form a cycle.
  Lastly, we show that sets of permutations that can be described both in $\mathsf{TOOB}$ and $\mathsf{TOTO}$
  are in some sense trivial.
  This gives a mathematical evidence that permutations-as-bijections and permutations-as-words
  are somewhat different objects.
\end{abstract}

{\it Keywords}: permutations, patterns, first order logic, Eurenfest-Fra\"iss\'e games, sorting operators.

\section{Introduction}

This paper being interested in permutations, it should start with a definition of them. 
Some combinatorialists would say that a permutation of size $n$ is a bijection from $[n]= \{1,2, \ldots,n\}$ to itself. 
Others would say that a permutation of size $n$ is a word (sometimes called the \emph{one-line representation} of the permutation) 
on the alphabet $\{1,2, \ldots,n\}$ containing each letter exactly once. 
Both are of course correct. 
The first definition is mostly popular among combinatorialists 
who view permutations as algebraic objects \revision{living} in the permutation group $\S_n$. 
The second one is classical in the \emph{Permutation Patterns} community, where a permutation is also often -- and equivalently -- represented graphically by its \emph{permutation diagram}
%instead of giving a permutation $\sigma$ of size $n$ by the word $\sigma(1) \sigma(2) \ldots \sigma(n)$, 
%we describe it by its diagram, 
which, for $\sigma \in \S_n$ is the $n \times n$ grid containing exactly one dot per row and per column, at coordinates $(i,\sigma(i))_{i \in [n]}$. 

In the huge combinatorics literature on permutations, 
there are, to our knowledge, very few works that consider at the same time 
the algebraic and the pattern view point. 
Some of those we are aware of study the stability of permutation classes by composition of permutations, as in \cite{Karpilovskij}, 
while others are interested in cycles exhibiting a particular pattern structure, see for example \cites{BonaCory,Elizalde} and references therein. 

Although the two (or three -- but words and diagrams are essentially the same thing, as will be clear later in this paper)
definitions do define permutations, they do not give the same point of view on these objects, 
as mirrored by the problems of different nature that are considered on them. 
As remarked by Cameron during his talk at PP2015, Galois even used two different names 
to account for these two definitions of permutations: in Galois's terms, 
a \emph{permutation} was what we described above as a permutation-as-a-word,
while a \emph{substitution} was a permutation-as-a-bijection. 

In this paper, we wish to consider both these definitions of permutations. 
Indeed, these points of view on permutations are believed to be rather orthogonal, and 
one purpose of our paper is to give mathematical evidence that permutations-as-words and permutations-as-bijections 
are really not the same object. 

To that effect, we use the framework of first-order logic. 
For each of these points of view, we define a first-order logical theory,
whose models are the permutations seen as bijections in one case, as words in the other case. 
We then investigate which properties of permutations are expressible in each of these theories. 
While the two theories have appeared briefly in the literature
-- see~\cite[Example 7.6]{Compton:Logical} for a $0$-$1$ law for permutations-as-bijections,
and \cite{cameron2002homogeneous} for the description of the so-called {\em Fra\"iss\'e classes} for permutations-as-words,
it seems that the expressibility of related concepts in the two logics has not been studied in details,
nor has one been compared to the other.
\medskip

It is no surprise that the theory associated with permutations seen
as bijections (called \TOOB -- the Theory Of One Bijection) can express statements about the cycle decomposition of permutations, 
while the theory associated with permutations seen as words (called \TOTO -- the Theory Of Two Orders)
is designed to express pattern-related concepts. 
We will indeed justify in \cref{sec:expressivityTOTO} that 
the containment/avoidance of all kinds of generalized patterns existing in the literature
is expressible in \TOTO. 
But we are also interested in describing which properties are \emph{not} expressible in each of these theories. 
Simple examples of such results that we prove are that \TOTO cannot express that an element of a permutation is a fixed point, 
while \TOOB cannot express that a permutation contains the pattern $231$. 
\bigskip

The results of this article can then be divided into three independent parts. 
\medskip

Our first set of results are expressibility results for \TOTO: 
related to (generalized) patterns, to the substitution decomposition, 
and most interestingly in the context of \emph{sorting operators}.
The study of sorting operators has been one of the historical motivation to study permutation patterns,
after the seminal work of Knuth \cite[Section 2.2.1]{knuth1968art}.
He proved that those permutations sortable with a stack are exactly those avoiding the pattern 231.
This has inspired many subsequent papers considering various sorting operators
(the \emph{bubble sort} operator \cite{chung2010bubble,albert2011bubble},
or queues in parallel \cite{tarjan1972sorting}) or the composition of such sorting operators.
Most notably, the permutations sortable by applying twice or three times the stack sorting operator $\sortS$
have been characterized \cite{west1993sorting,Ulfarsson:3stacks}.
Each of these results has required a generalization of the definition of pattern in a permutation.

In the present paper, we prove that, for each fixed integer $k \ge 1$,
it is possible to express in \TOTO the property that a permutation
is sortable by $k$ iterations of the stack sorting operator 
(\cref{cor:stacksortable}).
Moreover, our proof is constructive (at least in principle) and yields a \TOTO formula
expressing this property.
The same holds for iterations of the bubble sort operator or the queue-and-bypass operator
(or any combination of those; see \cref{prop:Bubble_queue}).

Since \TOTO is the natural logical framework related to patterns,
our theorem gives a ``meta-explanation'' to the fact that many sets of sortable permutations
are described through patterns.
It also suggests that \TOTO is a better framework than pattern-avoidance in this context, 
since it allows the description of $k$-sortable permutations for any fixed value of $k$,
while the cases $k=2$ and $k=3$ had required the introduction of ad hoc notions of patterns which
turn out to correspond to sets definable by certain particular types of formula in \TOTO.
%\medskip

%We now describe our second set of results.
Our second set of results deal with the (in-)expressibility of certain concepts in \TOTO.
As mentioned earlier, it is rather easy to see that 
some simple properties on the cycle structure of permutations 
are not expressible in \TOTO (e.g., the existence of a fixed point -- see \cref{Cor:Fixed_Points_Not_FO}).
It is however possible that they become expressible when we restrict the permutations under consideration to some permutation class $\C$.
As a trivial example, in the class $\C$ which consists only of the increasing permutations $12\cdots n$ (for all $n$) the sentence ``true'' is equivalent to the existence of a fixed point.
In \cref{thm:fixed_point_expressible_in_classes},
we characterize completely the permutation classes $\C$ in which
\TOTO \emph{can} express the fact that a permutation contains a fixed point
(resp. that a given point of a permutation is a fixed point).
We then consider longer cycles and characterize, for any fixed $k$,
the permutation classes $\C$ in which \TOTO can express that $k$ given points form a cycle
(\cref{thm:ExpressCycles}).
On the contrary, the characterization of permutation classes $\C$ in which \TOTO can express that a permutation contains a $k$-cycle 
(but not specifying on which points) is left as an open problem.

The Ehrenfeucht-Fra\"iss\'e theory, giving a game-theoretical characterization of permutations satisfying the same sentences
(see \cref{sec:EFGames}),
together with Erd\H{o}s-Szekeres theorem on the existence of long monotone subsequences in permutations,
are the two key elements in the proof.
%\medskip

Our final results focus on properties of permutations that are expressible
both in \TOTO and \TOOB.
As explained above, the two points of view -- permutations-as-bijection and permutations-as-diagrams --
are believed to be mostly orthogonal, so that we expect that there are few such properties.
We prove indeed (see \cref{Thm:Intersection_True_Or_False}) that these properties are in some sense trivial,
in that they are verified by either all or no permutations with large support
(the support of a permutation being its set of non-fixed points).
This gives the claimed evidence to the fact that permutations-as-words 
and permutations-as-bijections are different objects.
We also give a more precise, and constructive, characterization of
such properties (\cref{Thm:IntersectionBoolean}).
Again, Ehrenfeucht-Fra\"iss\'e games play a central role in the proof of these results.

This study is reminiscent of a result of Atkinson and Beals \cite{atkinson2001permutation}.
They consider permutation classes such that, for each $n$,
the permutations of size $n$ in the class from a subgroup of $\S_n$.
Such classes are proved to belong to a very restricted list.
Again, we observe that asking that a set has some nice property related to patterns (being a class)
and, at the same time, some nice property related to the group structure (being a subgroup),
forces the set under consideration to be somehow trivial.
For a recent refinement of Atkinson-Beals' result, we refer to \cite{Lehtonen2016,lehtonen2017patterngroups}.
%\bigskip

We finish this introduction by an open question,
inspired by the important amount of work on $0$-$1$ and convergence laws
in the random graph literature; see, e.g.
\cite[Chapter 10]{JansonRandomGraphs}. 
We also refer to~\cite[Example 7.6]{Compton:Logical} for a $0$-$1$ law for unlabeled models of \TOOB. 
\begin{question}
  For each $n\ge 1$, let $\bm \sigma_n$ be a uniform random permutation of size $n$.
  Does it hold that, for all sentences $\phi$ in \TOTO, the probability that $\bm \sigma_n$
  satisfies $\phi$ has a limit, as $n$ tends to infinity?
\end{question}
%\bigskip

We note that there cannot be a $0$-$1$ law for \TOTO since for example the property of being a simple permutation is
expressible in \TOTO and, as shown in \cite{AAKEnumerationSimple}, has limiting probability $1/e^2$.\medskip

{\em Note added in revision:}
a negative answer to the above question was announced by Müller and Skerman
during the 19th International Conference on Random Structures and Algorithms
(Zurich, July '19).
\medskip

The article is organized as follows. 
In Section~\ref{sec:TwoLogics}, we present the two first-order logical theories \TOTO and \TOOB
corresponding to the two points of view on permutations. 
We examine further the expressivity of \TOTO, the theory associated to permutations-as-words, in Section~\ref{sec:expressivityTOTO}. 
In particular, our results on sorting operators are presented in this section.
%(The expressibility of \TOOB being easier to discuss, at least for our purpose, it does not deserve a full section.)
Next, Section~\ref{sec:EFGames} presents the fundamental tool of Ehrenfeucht-Fra\"iss\'e games,
and shows some first inexpressibility results for \TOTO.
Then, Sections~\ref{Sect:Definability_Cycles} and \ref{Sect:Intersection} go in different directions. 
The first one explores how restricting \TOTO to permutation classes 
allows some properties of the cycle structure of permutations to become expressible. 
The second one describes which properties of permutations are expressible both in \TOTO and in \TOOB. 
\bigskip

\revision{
{\em Notation:}  the following notational convention for integer partitions
is used in Sections~\ref{sec:expressibilityC_laD_la} and \ref{Sect:Intersection}.
Recall that an integer partition $\la$ of $n$ is a nonincreasing list 
of positive integers of sum $n$;
equivalently, it is a finite multiset of positive integers.
We write $(k^m)$ for the partition consisting in $m$ copies of $k$;
also if $\la$ and $\mu$ are two partitions, $\la \cup \mu$ is their disjoint
union as multiset.
In particular $\la \cup (1^k)$ is obtained by adding $k$ parts
equal to $1$ to $\la$.}

% \todo[inline]{Include a section of notation/definitions? Otherwise, to what extent and where to include ``classical'' definitions? \\ 
% Mathilde: My feeling today (May 3, 2018) is that we would just copy David Bevan's survey for PP 2015. So I would tend to simply refer to it and omit all ``classical'' definition.}
% \todo[inline]{Oberwolfach consensus: define (classical) patterns and classes in the introduction. 
% Define inflation where first used. And don't define the rest.}
% \todo[inline]{I(Mathilde) have decided to put these definitions in section 3.1, not in the introduction. See if a modification is needed. }
%\subsection{Further questions}
%\todo[inline]{Do we want to include those? If yes, where? Are there more questions we want to include}
%\begin{itemize}
%\item
%What can we say about the cycle-types that must occur in a permutation class, i.e., what rules do we have of the form ``if permutations of cycle-type $\lambda$ exist, then so do \dots'' (where $\lambda \to \infty$ in some sense.)
%\item
%How small can a permutation class that contains all cycle-types be?
%By taking sums of simple cycles we get a $2^{n-1}$ class. Can we get lower?
%\end{itemize}
%
%
%
\section{Two first-order theories for permutations}\label{sec:TwoLogics}

We assume that the reader has some familiarity with the underlying concepts of first-order logic. 
For a basic introduction, we refer the reader to the Wikipedia page on this topic.
For a detailed presentation of the theory, Ebbinghaus-Flum's book on the topic is a good reference \cite{ModelTheory_Book}.
We will however try to present our two first-order theories
(intended to represent the two points of view on permutations described in the introduction)
so that they are accessible to readers with only a passing familiarity with logic. 

\subsection{Definition of \TOTO and \TOOB}

A \emph{signature} is any set of relation and function symbols, each equipped with an arity (which represents
the number of arguments that each symbol `expects'). 
The symbols in the signature will be used to write formulas. 
In this paper, we consider two signatures, which both have the special property of containing only relation symbols. 

The first one, $\SSS_{\mathsf{TO}}$, consists of two binary relation symbols: $<_P$ and $<_V$. 
It is the signature used in \TOTO, corresponding to viewing permutations as words, or diagrams, 
or more accurately as the data of two total orders indicating the position 
and the value orders of its elements (with $<_P$ and $<_V$ respectively).

The second signature, $\SSS_{\mathsf{OB}}$, consists of a single binary relation symbol, $R$. 
It is the signature used in \TOOB, and a relation between two elements indicates that the first one is sent to the second one by the permutation viewed as a bijection. 

Of course, we intend that $<_P$ and $<_V$ represent strict total orders, and that $R$ represents a bijection; 
this is however not part of the signature. 
It will be ensured later, with the \emph{axioms} of the two \emph{theories} \TOTO and \TOOB. 
To present them, we first need to recap basics about formulas and their models.

\medskip

In the special case of interest to us where there is no function symbol in the signature, 
we can skip the definition of \emph{terms} and move directly to \emph{atomic formulas}. 
These are obtained from variables (taken from an infinite set $\{x,y,z, \dots\}$) by putting them in relation using the relation symbols in the considered signature, or the equality symbol $=$. 
For example, $x=z$, $x <_P y$ and $x <_V x$ are atomic formulas on the signature $\SSS_{\mathsf{TO}}$, 
while  $z=y$, $xRy$ and $xRx$ are atomic formulas on the signature $\SSS_{\mathsf{OB}}$.
As is usual for binary relations we write them infix as above rather than the more proper $<_P(x,y)$

\emph{First-order formulas}, usually denoted by Greek letters ($\phi, \dots$) are then obtained inductively from the atomic formulas, 
as combinations of smaller formulas using the usual connectives of the first-order logic:
negation ($\neg$), 
conjunction ($\wedge$),
disjunction ($\vee$),
implication ($\rightarrow$),
equivalence ($\leftrightarrow$),
universal quantification ($\forall x \, \phi$, for $x$ a variable and $\phi$ a formula)
and existential quantification ($\exists x \, \phi$). 
Note that we restrict ourselves to first-order formulas: quantifiers may be applied only to single variables 
(as opposed to sets of variables in second-order for instance). 

A \emph{sentence} is a formula that has no free variable, that is to say in which all variables are quantified. 
For example, $\phi_1 = \exists x \ \exists y \ (x <_P y \et y <_V x)$ is a sentence on the signature $\SSS_{\mathsf{TO}}$ 
and $\phi_2(x) = \exists y\ xRy \et yRx $ is not a sentence but a formula with one free variable $x$ on the signature $\SSS_{\mathsf{OB}}$. 

%\medskip

The formulas themselves do not describe permutations. 
Instead, formulas can be used to describe properties of permutations. 
Permutations are then called \emph{models} of a formula. 

Generally speaking, given a signature $\SSS$, a \emph{(finite) model} is a pair $\MMM = (A,I)$ 
where $A$ is any finite set, called the \emph{domain}, 
and $I$ describes an interpretation of the symbols in $\SSS$ on $A$. 
Formally, $I$ is the data, for every relation symbol $R$
(say, of arity $k$) in the signature, of a subset $I(R)$ of $A^k$.
Note that models with infinite domains also exist; 
however, in this paper, we restrict ourselves to finite models. 
This makes sense since models are intended to represent permutations of any finite size.

Two models $\MMM = (A,I)$ and $\MMM' = (A',I')$ are \emph{isomorphic} when 
there exists a bijection $f$ from $A$ to $A'$ such that 
for every relation symbol $R$ (say, of arity $k$) in the signature, 
for all $k$-tuples $(a_1, \dots, a_k)$ of elements of $A$, 
$(a_1, \dots, a_k)$ is in $I(R)$ if and only if $(f(a_1), \dots, f(a_k))$ is in $I'(R)$ 
(together with a similar condition for function symbols, if the signature contains some).

For example, on the signature $\SSS_{\mathsf{TO}}$, a model is 
% $(\{a,b,c,d,e\},\{<_P \mapsto \prec_P,<_V \mapsto \prec_V\})$ 
\[
\MMM_{\mathsf{TO}} =\left(\{a,b,c,d,e\},
\begin{cases}
<_P \mapsto \prec_P \\
<_V \mapsto \prec_V 
\end{cases} \right),
\]
where $\prec_P$ (resp. $\prec_V$) is the strict total order defined on $A=\{a,b,c,d,e\}$ by $a \prec_P b \prec_P c \prec_P d \prec_P e$ 
(resp. $c \prec_V e \prec_V a \prec_V d \prec_V b$). 
This model is isomorphic to \[
\MMM'_{\mathsf{TO}} =\left(\{1,2,3,4,5\},
\begin{cases}
<_P \mapsto \prec'_P \\
<_V \mapsto \prec'_V 
\end{cases} \right),
\]
where $\prec'_P$ and $\prec'_V$ are the strict total orders defined on $\{1,2,3,4,5\}$ by $1 \prec'_P 2 \prec'_P 3 \prec'_P 4 \prec'_P 5$ 
and $3 \prec'_V 5 \prec'_V 1 \prec'_V 4 \prec'_V 2$, the underlying bijection $f$ being $a\mapsto 1, \dots, e \mapsto 5$. 

As we will explain in more details later (see Section~\ref{sec:perm_as_models}), 
$\MMM_{\mathsf{TO}}$ and $\MMM'_{\mathsf{TO}}$ both represent the permutation $\sigma$ which can be written in one-line notation as $\sigma = 35142$. 
This same permutation, which decomposes into a product of cycles as $\sigma = (1,3)(2,5)(4)$, 
can of course also be represented by a model on the signature $\SSS_{\mathsf{OB}}$: for example 
$\MMM_{\mathsf{OB}} = (\{1,2,3,4,5\},\mathcal{R})$ where the only pairs in $\mathcal{R}$ are $1\mathcal{R}3$, $2\mathcal{R}5$, $3\mathcal{R}1$, $4\mathcal{R}4$ and $5\mathcal{R}2$.

Note that we have been careful above to use different notations for the relation symbols ($<_P,<_V,R$) and their interpretations ($\prec_P,\prec_V,\mathcal{R}$); 
in the following, we may be more flexible and use the same notation for both the relation symbol and its interpretation. 

Finally, a model $\MMM= (A,I)$ is said to \emph{satisfy} a sentence $\phi$ when the truth value of $\phi$ is ``True" when interpreting all symbols in $\phi$ according to $I$. 
We also say that $\MMM$ is a model of $\phi$, and denote it by $\MMM \models \phi$. 
For example, it is easily checked that $\MMM_{\mathsf{TO}}$ above is a model of our earlier example formula $\phi_1$ 
(since, for instance, evaluating $x$ to $a$ and $y$ to $e$ makes the inner formula $x <_P y \et y <_V x$ true).

In the special case of interest to us where all models are finite 
two models satisfy the same sentences if and only if they are isomorphic. 

The definition of satisfiability above applies only to sentences, which do not have free variables. 
It can however be extended to formulas with free variables, provided that the free variables are \emph{assigned} values from the domain. 
We usually write $\phi(\mathbf{x})$ a formula with free variables $\mathbf{x} = (x_1, \dots, x_k)$, 
and, given $\MMM= (A,I)$ a model and $\mathbf{a} = (a_1, \dots, a_k) \in A^k$, 
we say that $(\MMM,\mathbf{a})$ satisfies $\phi(\mathbf{x})$ (written $(\MMM,\mathbf{a}) \models \phi(\mathbf{x})$)
if the truth value of $\phi(\mathbf{x})$ is ``True'' when every $x_i$ is interpreted as $a_i$ and relations symbols are interpreted according to $I$. 
Getting back to our examples
\[
(\MMM_{\mathsf{OB}},(1)) \models \exists y \, \left( xRy \et yRx \right)
\]
since we can witness the existential quantifier with $y = 3$ and it is the case that $1\mathcal{R}3$ and $3\mathcal{R}1$.

\medskip

Formally speaking, a \emph{theory} is then just a set of sentences, which are called the \emph{axioms} of the theory. 
A \emph{model of a theory} is any model that satisfies all axioms of the theory. 

The axioms of a theory can be seen as ensuring some properties of the models considered, 
or equivalently as imposing some conditions of the interpretations of the relation symbols. 
In our case, the axioms of \TOTO (the Theory Of Two Orders) ensure that $<_P$ and $<_V$ are indeed strict total orders, 
while the axioms of \TOOB (the Theory Of One Bijection) indicate that $R$ is a bijection. 
The fact that these properties can be described by first-order sentences (to be taken then as the axioms of our theories) 
is an easy exercise that is left to the reader. 
This completes the definition of our two theories \TOTO and \TOOB. 

\subsection{Permutations as models of \TOTO and \TOOB} \label{sec:perm_as_models}

We now turn to explaining more precisely how permutations can be encoded as models of \TOTO and \TOOB.

\medskip

We start with \TOOB.
Given a permutation $\sigma$ of size $n$, the model of \TOOB that we associate to it is 
$(A^\sigma, R^{\sigma})$, where $A^\sigma = \{1,2,\dots, n\}$ and $R^{\sigma}$ is defined by 
$iR^{\sigma}j$ if and only if $\sigma(i)=j$. 

It should be noticed that the total order on $\{1,2,\dots, n\}$ is not at all captured by the model $(A^\sigma, R^{\sigma})$. 
\TOOB is on the contrary designed to describe properties of the cycle decomposition of permutations. 
For instance, the existence of a cycle of a given size is very easy to express with a sentence of \TOOB:
the existence of a fixed point is expressed by $\exists\, x\ xRx$;
while the existence of a cycle of size $2$ is expressed by
\[
\exists\, x,y \, \left( x \ne y \, \wedge \, xRy\,\wedge\, yRx \right);
\]
and the generalization to cycles of greater size is obvious.
It should also be noted that \TOOB can only express ``finite'' statements: 
for any given $k$, it is expressible that a permutation has size $k$ and is a $k$-cycle (using $k+1$ variables), 
but it is not possible to express that a permutation consists of a single cycle (of arbitrary size).

Although not all models of \TOOB are of the form $(A^\sigma, R^{\sigma})$ for a permutation $\sigma$,
the following proposition shows that it is almost the case.

\begin{proposition}
For any model $(A,R)$ of \TOOB, there exists a permutation $\sigma$ such that $(A,R)$ and $(A^\sigma, R^{\sigma})$ are 
isomorphic. %\footnote{Recall that two models $(A,R)$ and $(B,R')$ of \TOOB are isomorphic when there exists a bijection $f$ from $A$ to $B$ such that
% for all $a_1,a_2 \in A$, $a_1 R a_2$ if and only if $f(a_1) R' f(a_2)$.}.
In this case, we say that $\sigma$ is a permutation associated with $(A,R)$. 
\end{proposition}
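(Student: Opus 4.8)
The plan is to transport the structure of $(A,R)$ onto the set $[n]$ through an arbitrary relabelling, reading off the desired permutation directly from $R$. The key preliminary observation, which I would record first, is what the axioms of \TOOB say about the interpretation $R$: they force $R$ to be (the graph of) a bijection of the domain. Concretely, for every $a \in A$ there is a unique $b \in A$ with $aRb$, and the map so defined is injective and surjective. Hence there is a well-defined bijection $f \colon A \to A$ characterized by $aRb \iff f(a) = b$.

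Next, writing $n = |A|$ (finite, by our standing assumption that all models are finite), I would fix any bijection $\beta \colon A \to [n]$; intuitively $\beta$ relabels the elements of $A$ as $1, 2, \dots, n$. I then set
\[
\sigma := \beta \circ f \circ \beta^{-1},
\]
which, being a composite of bijections, is a bijection of $[n]$ onto itself, i.e.\ a permutation of size $n$. It remains only to check that $\beta$ realizes an isomorphism of models between $(A,R)$ and $(A^\sigma, R^{\sigma})$, where $A^\sigma = [n]$.

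The verification is the routine core of the argument. By the definition of model isomorphism for the signature $\mathcal{S}_{\mathsf{OB}}$ (which has the single binary symbol $R$), I must show that for all $a, b \in A$ one has $aRb$ if and only if $\beta(a)\, R^{\sigma}\, \beta(b)$. Unwinding both sides: $aRb$ means $f(a) = b$, while $\beta(a)\, R^{\sigma}\, \beta(b)$ means $\sigma(\beta(a)) = \beta(b)$, that is $\beta(f(a)) = \beta(b)$ by the definition of $\sigma$; since $\beta$ is a bijection, this last equality is equivalent to $f(a) = b$. The two conditions coincide, so $\beta$ is the required isomorphism.

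I expect no genuine obstacle here: the statement is a standard transport-of-structure fact, and the only content is identifying $R$ with an actual bijection $f$ (which is exactly what the \TOOB axioms guarantee) together with the observation that isomorphism of models in this one-relation signature is precisely conjugation of the underlying bijection. Note that $\sigma$ is not unique — a different enumeration $\beta'$ produces a permutation conjugate to $\sigma$ — which is consistent with the proposition asserting only existence.
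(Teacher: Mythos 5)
Your argument is correct and is essentially the paper's proof: both fix an arbitrary relabelling bijection from $A$ to $[n]$ and define $\sigma$ by transporting the relation $R$ along it (the paper writes $\sigma(i)=j \iff f^{-1}(i)\,R\,f^{-1}(j)$, which is exactly your $\sigma = \beta \circ f \circ \beta^{-1}$). You merely spell out the verification that the paper dismisses with ``Clearly.''
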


\begin{proof}
Let $(A,R)$ be a model of \TOOB, and denote by $n$ the cardinality of $A$.
Consider any bijection $f$ between $A$ and $\{1,2,\dots, n\}$, and let $\sigma$ be the permutation such that 
$\sigma(i) = j$ if and only if $f^{-1}(i) R f^{-1}(j)$.
Clearly, $(A,R)$ and $(A^\sigma, R^{\sigma})$ are isomorphic.
\end{proof}

It is already visible in the above proof that the permutation associated to a given model of \TOOB is not uniquely defined. 
The next proposition describes the relation between all such permutations. 
Recall that the \emph{cycle-type} of a permutation of size $n$ is the partition $\lambda = (\lambda_1, \dots, \lambda_k)$ of $n$ 
such that $\sigma$ can be decomposed as a product of $k$ disjoint cycles, of respective sizes $\lambda_1, \dots, \lambda_k$.
Recall also that two permutations are \emph{conjugate} if and only if they have the same cycle-type. 
The following proposition follows easily from the various definitions.

\begin{proposition}
\label{prop:sameTOOB_conjugatePerm}
Two models of \TOOB are isomorphic if and only if the permutations associated with them are conjugate. 
\end{proposition}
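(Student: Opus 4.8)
The plan is to establish the two implications of the biconditional separately, both by direct appeal to the definitions of isomorphism (of \TOOB-models) and of conjugacy (equality of cycle-type). First I would unwind what an isomorphism of two models $(A,R)$ and $(A',R')$ of \TOOB means concretely: it is a bijection $f \colon A \to A'$ such that $iRj \iff f(i)R'f(j)$ for all $i,j \in A$. Since by the previous proposition each model $(A,R)$ is isomorphic to some $(A^\sigma, R^{\sigma})$, and isomorphism of models is transitive, it suffices to prove the statement for two models of the canonical form $(A^\sigma, R^{\sigma})$ and $(A^\tau, R^{\tau})$, where $\sigma$ and $\tau$ are the associated permutations.

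For the forward direction, I would assume $(A^\sigma, R^{\sigma})$ and $(A^\tau, R^{\tau})$ are isomorphic via a bijection $f$, and translate the isomorphism condition into permutation language. By definition $iR^{\sigma}j \iff \sigma(i)=j$, so the condition $iR^{\sigma}j \iff f(i)R^{\tau}f(j)$ becomes $\sigma(i)=j \iff \tau(f(i))=f(j)$, which says precisely that $f(\sigma(i)) = \tau(f(i))$ for all $i$, i.e.\ $f \circ \sigma = \tau \circ f$, or $\tau = f \sigma f^{-1}$. Thus $\sigma$ and $\tau$ are conjugate (as elements of the symmetric group, after transporting along $f$), and it is a standard fact that conjugate permutations share the same cycle-type. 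For the converse, I would assume $\sigma$ and $\tau$ have the same cycle-type and build the isomorphism explicitly: a conjugacy $f$ can be constructed cycle by cycle, matching each cycle of $\sigma$ with a cycle of $\tau$ of equal length and sending the successive elements of one cycle to the successive elements of the other. This $f$ satisfies $\tau f = f \sigma$, which unwinds back to the model-isomorphism condition, so the two \TOOB-models are isomorphic.

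The only genuine subtlety, and the step I would be most careful about, is the bookkeeping between the abstract isomorphism condition on models and the group-theoretic notion of conjugacy, since the associated permutation is only defined up to a relabelling of the domain (as the authors noted just before the statement). In particular one must be a little careful that ``conjugate'' is meant up to transport of structure, because $\sigma$ and $\tau$ need not act on the same ground set $\{1,\dots,n\}$ via the same labelling. Concretely, both models have domains of the same cardinality $n$ (isomorphic models have equinumerous domains, and conjugate permutations have the same size), and after fixing bijections to $\{1,\dots,n\}$ the relation ``same cycle-type $\iff$ conjugate'' is exactly the classical characterization of conjugacy classes in $\S_n$. Everything else is routine translation, which is why the authors describe the proposition as following easily from the definitions.
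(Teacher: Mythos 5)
Your proof is correct and matches the paper's intent: the paper gives no explicit proof, stating only that the proposition ``follows easily from the various definitions,'' and your argument (translating model isomorphism into the conjugacy equation $\tau = f\sigma f^{-1}$ and invoking the classical cycle-type characterization of conjugacy) is exactly the routine verification the authors have in mind.
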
 

For finite models, being isomorphic is equivalent to satisfying the same set of sentences.
In particular, the models corresponding to conjugate permutations satisfy the same sentences.
This proves our claim of the introduction that the avoidance of $231$ is not expressible in \TOOB, since $231$ and $312$ are conjugate
and of course one of them contains $231$ while the other doesn't! 
This shows a weakness of the expressivity of \TOOB. 
In particular, the containment of a given pattern is in general not expressible in \TOOB 
(unlike in \TOTO, as we discuss in Section~\ref{sec:patterns}).
In fact, it is easy to check that, \revision{for $|\pi| \ge 3$,}
it is not possible to express the avoidance of $\pi$ in \TOOB.

%\medskip

We now move to \TOTO. 
As we shall see, the focus on permutations is different: 
\TOTO considers the relative order between the elements of the permutations
and does not capture the cycle structure (see Corollary \ref{Cor:Fixed_Points_Not_FO} and the more involved \cref{thm:ExpressCycles}).

To represent a permutation $\sigma$ of size $n$ as a model of \TOTO, we do the following. 
We consider the domain $A^\sigma = \{(i,\sigma(i)) : 1 \leq i \leq n\}$ of cardinality $n$, 
and we encode $\sigma$ by the triple $(A^\sigma, <_P^{\sigma}, <_V^{\sigma})$ 
where $<_P^{\sigma}$ (resp. $<_V^{\sigma})$) is the strict total order on $A^\sigma$ 
defined by the natural order on the first (resp. second) component of the elements of $A^\sigma$. 
When it is clear from the context (as for instance in Proposition~\ref{prop:sameFO_samePerm}), 
we may denote the model $(A^\sigma, <_P^{\sigma}, <_V^{\sigma})$ of \TOTO simply by $\sigma$.

Representing a permutation $\sigma$ by $(A^\sigma, <_P^{\sigma}, <_V^{\sigma})$ as above 
is very close to the representation of permutations by their diagrams. 
When considering permutations \emph{via} their diagrams, it is often observed that the actual coordinates of the points do not really matter, but only their relative positions. 
Hence, a property that we would like to hold is that 
two isomorphic %\footnote{Recall that 
% two models $(A,<_P,<_V)$ and $(B,\prec_P,\prec_V)$ of \TOTO are isomorphic when there exists a bijection $f$ from $A$ to $B$ such that
% for all $a_1,a_2 \in A$, $a_1 <_P a_2$ if and only if $f(a_1) \prec_P f(a_2)$ and $a_1 <_V a_2$ if and only if $f(a_1) \prec_V f(a_2)$.} 
models of \TOTO 
should represent the same permutation. 
To make this statement precise, we need to define the permutation that is associated with a model of \TOTO. 
% For the moment, we can state the following (obvious) logical counterpart of this desired property.
% 
% \begin{proposition}
% \label{prop:TOTOisomorphic_sameFO}
% Two models of \TOTO are isomorphic if and only if they satisfy the same sentences. 
% % 
% % Then, for any sentence $\phi$, $(A,<_P,<_V) \models \phi$ if and only if $(B,\prec_P,\prec_V) \models \phi$. 
% % % Moreover, for any formula $\phi(\mathbf{x})$ with free variables,  $((A,<_P,<_V),(a_1, \dots, a_k)) \models \phi(\mathbf{x})$ if and only if $((B,\prec_P,\prec_V),(f(a_1), \dots, f(a_k))) \models \phi(\mathbf{x})$. 
% % 
% % Conversely, if two models of \TOTO satisfy the same sentences, then they are isomorphic. 
% \end{proposition}
% % 
% % Proof:
% % => since sentences use only symbols whose interpretations need to be isomorphic 
% % <= there exists a sentence describing an isomorphism class of models

Let $(A,<_P,<_V)$ be any model of \TOTO. 
First, for any strict total order $<$ on $A$, we define the \emph{rank} of $a \in A$ as 
$\rank(a) = \card \{b \in A : b < a\} +1$. 
Note that when $a$ runs over $A$ and $A$ is of cardinality $n$, $\rank(a)$ takes exactly once each value in $\{1,2, \ldots,n\}$. 
Then, we write $A= \{a_1, a_2,  \ldots, a_n\}$ where each $a_i$ has rank $i$ for $<_P$, and 
we let $r_i$ be the rank of $a_i$ for $<_V$, for each $i$. 
The permutation $\sigma$ is the one whose one-line representation is $r_1 r_2 \ldots r_n$. 

It is obvious that $(A,<_P,<_V)$ and $(A^\sigma, <_P^{\sigma}, <_V^{\sigma})$ are isomorphic. 
Moreover, if two models of \TOTO are isomorphic, then the permutations associated with them are equal. 
We therefore have the following analogue in \TOTO of Proposition~\ref{prop:sameTOOB_conjugatePerm} for \TOOB.

\begin{proposition}
\label{prop:sameFO_samePerm}
Two models of \TOTO are isomorphic and, hence, satisfy the same set of sentences
if and only if the permutations associated with them are equal. 
\end{proposition}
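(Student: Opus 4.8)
The plan is to establish the two implications of Proposition~\ref{prop:sameFO_samePerm} separately, relying on the construction of the associated permutation that was just described. The statement chains together two equivalences, but since the paper has already remarked that for finite models ``being isomorphic is equivalent to satisfying the same set of sentences'', the real content is the equivalence between isomorphism of two models and equality of their associated permutations. I would therefore treat the middle term (satisfying the same sentences) as automatic and concentrate on proving: two models $(A,<_P,<_V)$ and $(A',<'_P,<'_V)$ of \TOTO are isomorphic if and only if their associated permutations $\sigma$ and $\sigma'$ coincide.

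For the forward direction, suppose $f\colon A \to A'$ is an isomorphism. The key observation is that an isomorphism preserves each of the two strict total orders, and hence it preserves \emph{ranks}: since $\rank(a)$ counts the elements below $a$, and $f$ is an order-preserving bijection for both $<_P$ and $<_V$, we get $\rank_{<_P}(a) = \rank_{<'_P}(f(a))$ and likewise for $<_V$. Writing $A = \{a_1, \dots, a_n\}$ with $a_i$ of $<_P$-rank $i$, the images $f(a_1), \dots, f(a_n)$ are then exactly the $<'_P$-rank-$i$ elements of $A'$, in order; and the $<_V$-rank of $a_i$ equals the $<'_V$-rank of $f(a_i)$. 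By the definition of the associated permutation, this says precisely that the one-line words $r_1 \cdots r_n$ and $r'_1 \cdots r'_n$ agree, so $\sigma = \sigma'$.

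For the converse, suppose $\sigma = \sigma'$. Here I would use the fact, already stated in the excerpt, that each model is isomorphic to the canonical model of its associated permutation: $(A,<_P,<_V) \cong (A^\sigma, <_P^\sigma, <_V^\sigma)$ and $(A',<'_P,<'_V) \cong (A^{\sigma'}, <_P^{\sigma'}, <_V^{\sigma'})$. Since $\sigma = \sigma'$, the two canonical models are literally identical, so the two original models are each isomorphic to the same model and hence isomorphic to one another (isomorphism being an equivalence relation). This direction is essentially a free consequence of the canonical-form remark preceding the proposition.

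I do not expect a genuine obstacle here; the proposition is explicitly flagged as following ``easily from the various definitions,'' and indeed the construction of $\sigma$ from $(A,<_P,<_V)$ via ranks has been set up so that isomorphism-invariance is immediate. If anything merits a sentence of care, it is making explicit that an order isomorphism preserves ranks (a one-line verification from the definition $\rank(a) = \card\{b : b < a\} + 1$), and noting that the bridge between ``isomorphic'' and ``satisfy the same sentences'' is exactly the finite-model fact quoted earlier. With those two points recorded, the two implications above close the argument.
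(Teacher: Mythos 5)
Your proposal is correct and follows essentially the same route as the paper, which likewise reduces the statement to the two observations that every model is isomorphic to the canonical model $(A^\sigma, <_P^{\sigma}, <_V^{\sigma})$ of its associated permutation and that isomorphic models yield equal associated permutations (via rank preservation). The only difference is that you spell out the rank-preservation argument explicitly, which the paper leaves as ``obvious.''
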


Comparing Propositions~\ref{prop:sameTOOB_conjugatePerm} and~\ref{prop:sameFO_samePerm}, 
we immediately see that \TOTO allows to describe a lot more details than \TOOB. 
Indeed, while formulas of \TOTO allow to discriminate all permutations among themselves, 
formulas of \TOOB do not distinguish between permutations of the same cycle-type. 
The expressivity of \TOTO (and to a lesser extent, of \TOOB) will be further discussed in the rest of the paper.

\section{Expressivity of \TOTO} \label{sec:expressivityTOTO}

As we explained earlier, \TOTO 
%(the Theory Of Two Orders) 
has been designed to express pattern-related concepts in permutations. 
In this section, we illustrate this fact with numerous examples. 
\begin{itemize}
  \item In \cref{sec:patterns},
    we consider containment/avoidance of various kind of generalized patterns.
  \item In \cref{ssec:subst}, we investigate some properties linked to the substitution decomposition, namely 
    being $\oplus$/$\ominus$ indecomposable and being simple.
  \item \cref{ssec:sorting} considers expressibility results in the context of sorting operators.
    It contains in particular our first main result, on the expressibility
    of the set of $k$-stack sortable permutations, in the sense of West.
  \item \cref{sec:expressibilityC_laD_la} shows some properties related to the cycle structure
    which are nevertheless expressible in \TOTO. This is a preparation for \cref{Sect:Intersection}.
\end{itemize}

\subsection{Notions of patterns} \label{sec:patterns}

We assume that the reader is familiar with basic concepts related to patterns in permutations. 
Let us only recall a couple of very classical definitions. 
A permutation $\sigma = \sigma(1)\sigma(2) \dots \sigma(n)$ \emph{contains} the permutation $\pi= \pi(1) \pi(2) \dots \pi(k)$ as a \emph{(classical) pattern} 
if there exist indices $i_1< i_2 <\dots< i_k$ between $1$ and $n$ such that $\pi(j) < \pi(h)$ if and only if $\sigma(i_j) < \sigma(i_h)$. In this case we also 
say that $\pi$ \emph{is a pattern of} $\sigma$.
The sequence $(i_1, i_2, \dots, i_k)$ is called an \emph{occurrence} of $\pi$ in $\sigma$. If $\sigma$ does not contain $\pi$ then we say that $\sigma$ \emph{avoids} $\pi$.
A \emph{permutation class} is a set $\C$ of permutations such that, if $\sigma \in \C$ and $\pi$ is a pattern of $\sigma$, then $\pi \in \C$. We note that ``contains'' is a partial order on the set of all finite permutations.
Equivalently, 
permutation classes can be characterized as those sets of permutations that avoid some (possibly infinite) family $B$ of patterns
which we write $\C = \Av(B)$.
If no permutation of $B$ contains any other as a pattern, then this determines $B$ uniquely from $\C$ and $B$ is called the \emph{basis} of $\C$.
Other classical definitions about permutations and their patterns have been conveniently summarized 
in Bevan's brief presentation~\cite{Bevan:brief} prepared for the conference \emph{Permutation Patterns 2015}. 
The goal of this section is to illustrate that all notions of patterns
that we have found in the literature are expressible in \TOTO. 

\subsubsection{Classical patterns}

Note that, when defining that $(i_1, i_2, \dots, i_k)$ is an occurrence of $\pi$ in $\sigma$,
we request the indices $i_j$ to be increasing. 
It could also be natural to consider any permutation of $(i_1, i_2, \dots, i_k)$  as an occurrence of $\pi$,
but this is {\em not} what we are doing here. 
Also, in our framework, if a sequence $(i_1, i_2, \dots, i_k)$ of indices gives an occurrence of $\pi$, 
it is convenient to write that $\big((i_1,\sigma(i_1)),(i_2,\sigma(i_2)), \dots, (i_k,\sigma(i_k)) \big)$ is an occurrence of $\pi$ in $\sigma$. 
% 
% is a $k$-uple $(a_1,\dots,a_k)$
% so that the positions of $a_1,\dots,a_k$ are increasing and their values are in the same relative order than $\pi$.
% Note that it would be natural to consider any permutation of $(a_1,\dots,a_k)$ as an occurrence of $\pi$ as well,
% but this is {\em not} what we are doing here.
% %Considering such $k$-uples as occurrence of $\pi$ or not has no influence on
% %The existence of an occurrence of $\sigma$ in a permutation $\pi$ does not depend on this convention.
The same remarks apply to more general types of patterns, discussed in Section~\ref{sec:generalized_patterns}.

\begin{proposition}
\label{prop:classical_pattern}
Let $\pi$ be any permutation of size $k$. 
There exists a formula $\psi_\pi(x_1, \ldots, x_k)$ of \TOTO to express the property
that $k$ elements $a_1, \ldots, a_k$ of a permutation $\sigma$ %(here, $a_j= (i_j, \sigma(i_j))$, for all $j$)
form an occurrence of the pattern $\pi$ (in the classical sense);
more precisely, for any permutation $\sigma$ and elements $a_1, \ldots, a_k$ in $\sigma$,
\[
(a_1, \ldots, a_k) \text{ is an occurrence of $\pi$ in } \sigma 
\Leftrightarrow (\sigma,a_1, \ldots, a_k) \models \psi_\pi(x_1, \ldots, x_k).
\]
% Let $\pi$ be any permutation. 
% There exists a first-order formula $\psi_\pi(x,y,z)$ to express the property
% that three elements $a,b,c$ of a permutation $\sigma$
% form an occurrence of the pattern $\pi$ (in the classical sense);
% more precisely, for any permutation $\sigma$ and elements $a,b,c$ in $\sigma$,
% \[
% (a,b,c) \text{ is an occurrence of $\pi$ in } \sigma
% \text{ if and only if } (\sigma,a,b,c) \models \psi_\pi(a,b,c).
% \]
\end{proposition}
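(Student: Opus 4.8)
The plan is to construct the formula $\psi_\pi$ directly by translating the definition of a classical pattern occurrence into the language of \TOTO. Recall that an occurrence of $\pi$ at points $(a_1, \ldots, a_k)$ requires two things: first, that the positions of the points increase in the order $a_1, a_2, \ldots, a_k$ (this is forced because we indexed the occurrence by increasing indices $i_1 < i_2 < \cdots < i_k$); and second, that the relative order of the \emph{values} matches the pattern $\pi$, that is, $a_j$ has a smaller value than $a_h$ precisely when $\pi(j) < \pi(h)$. Both of these are purely order-theoretic conditions on the two total orders $<_P$ and $<_V$, which is exactly what \TOTO is built to express.

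Concretely, I would set
\[
\psi_\pi(x_1, \ldots, x_k) = \Big( \bigwedge_{j=1}^{k-1} x_j <_P x_{j+1} \Big) \et \Big( \bigwedge_{\substack{1 \le j, h \le k \\ \pi(j) < \pi(h)}} x_j <_V x_h \Big).
\]
The first conjunction fixes the positional order of the free variables to be increasing, matching the convention that an occurrence is read left to right. The second conjunction encodes the value pattern: for each pair of indices on which $\pi$ is increasing, we demand that the corresponding points be increasing in value. Since $\pi$ is a fixed permutation, there are finitely many such pairs, so this is a legitimate finite first-order formula with no quantifiers at all.

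The correctness argument then runs in both directions and is essentially a bookkeeping check. Given a model $(\sigma, a_1, \ldots, a_k)$, the elements $a_j = (i_j, \sigma(i_j))$ satisfy $a_j <_P a_{j+1}$ exactly when $i_j < i_{j+1}$, so the first conjunct holds iff the indices are increasing; and $a_j <_V a_h$ holds iff $\sigma(i_j) < \sigma(i_h)$, so matching these value comparisons against the condition $\pi(j) < \pi(h)$ is precisely the definition of $(i_1, \ldots, i_k)$ being an occurrence of $\pi$. One should note that specifying $<_V$-comparisons only for the increasing pairs of $\pi$ already determines the full value order among the $x_j$, since $<_V$ is a total order and the $\pi$-increasing pairs and $\pi$-decreasing pairs partition all pairs; hence requiring the converse inequalities for $\pi$-decreasing pairs is redundant (though one could include them symmetrically for clarity).

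I do not anticipate a genuine obstacle here: the statement is essentially a definitional unfolding, and the only subtle point is making sure the positional-increasingness of the free variables is enforced, since otherwise an arbitrary permutation of the points could spuriously satisfy the value constraints without forming an occurrence in the order-respecting sense emphasized in the text. Keeping $x_1 <_P \cdots <_P x_k$ as an explicit conjunct is what pins this down.
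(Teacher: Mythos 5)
Your proposal is correct and follows essentially the same route as the paper: a quantifier-free conjunction fixing the positional order of $x_1,\dots,x_k$ to be increasing and constraining the value order to match $\pi$. The paper writes the value constraint as a single chain $x_{\pi^{-1}(1)} <_V \dots <_V x_{\pi^{-1}(k)}$ rather than your pairwise conjunction over $\pi$-increasing pairs, but the two formulas are logically equivalent.
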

For example, the pattern $231$ corresponds to the formula 
\[ \psi_{231}(x,y,z) := (x <_P y <_P z) \et (z <_V x <_V y).\] 

\begin{proof}
Clearly, generalizing the above example, it is enough to take the following formula, where $k$ is the size of $\pi$: 
\[
\footnotesize{
 \psi_\pi(x_1,\dots,x_k) = (x_1 <_P x_2 <_P \dots <_P x_k) \et (x_{\pi^{-1}(1)} <_V x_{\pi^{-1}(2)} <_V \dots <_V x_{\pi^{-1}(k)}).} \qedhere
\]
\end{proof}

\begin{corollary}
\label{cor:classical_pattern_sentence}
The containment of a given pattern is a property expressible in \TOTO. 
\end{corollary}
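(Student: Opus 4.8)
The plan is to turn the $k$-ary formula supplied by \cref{prop:classical_pattern} into a sentence by existentially quantifying over its free variables. Concretely, given a pattern $\pi$ of size $k$, let $\psi_\pi(x_1,\ldots,x_k)$ be the \TOTO formula provided by \cref{prop:classical_pattern}, and define the sentence
\[
\phi_\pi := \exists x_1 \, \exists x_2 \cdots \exists x_k \; \psi_\pi(x_1,\ldots,x_k).
\]
I claim that a permutation $\sigma$ contains $\pi$ if and only if $\sigma \models \phi_\pi$, so that $\phi_\pi$ witnesses the expressibility of pattern containment in \TOTO.

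The verification is a direct unwinding of the definitions. By the semantics of existential quantification, $\sigma \models \phi_\pi$ holds exactly when there exist elements $a_1,\ldots,a_k$ of the domain $A^\sigma$ such that $(\sigma, a_1,\ldots,a_k) \models \psi_\pi(x_1,\ldots,x_k)$. By \cref{prop:classical_pattern}, this last condition is equivalent to $(a_1,\ldots,a_k)$ being an occurrence of $\pi$ in $\sigma$. Hence $\sigma \models \phi_\pi$ if and only if $\sigma$ admits at least one occurrence of $\pi$, which is precisely the definition of $\sigma$ containing $\pi$.

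The only point deserving a moment's attention is that an occurrence requires the $k$ chosen elements to be distinct, whereas the existential quantifiers range independently over the whole domain. This causes no difficulty: the formula $\psi_\pi$ opens with the chain $x_1 <_P x_2 <_P \cdots <_P x_k$, and since the axioms of \TOTO force $<_P$ to be a strict total order, any assignment satisfying $\psi_\pi$ must already assign pairwise distinct values to the $x_i$; no explicit inequality clauses are needed. As this makes clear, there is essentially no obstacle here — the corollary is an immediate consequence of \cref{prop:classical_pattern}, the passage from the open formula to the sentence being nothing more than its existential closure.
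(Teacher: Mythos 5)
Your proof is correct and is essentially identical to the paper's: both form the existential closure $\exists x_1 \cdots \exists x_k \, \psi_\pi(x_1,\ldots,x_k)$ and invoke \cref{prop:classical_pattern}. Your added observation that the chain $x_1 <_P \cdots <_P x_k$ automatically forces distinctness is a nice touch the paper leaves implicit.
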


\begin{proof}
  Let $\pi$ be a given pattern of size $k$. The containment of $\pi$ simply corresponds to the formula
  \[ \exists \, x_1,\dots,x_k \, \psi_\pi(x_1,\dots,x_k). \qedhere\]
\end{proof}

Taking negations and conjunctions, Corollary~\ref{cor:classical_pattern_sentence} immediately gives the following. 
\begin{corollary}
The avoidance of a given pattern, 
and membership of any finitely-based permutation class, are properties expressible in \TOTO. 
\end{corollary}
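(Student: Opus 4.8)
The final statement to prove is:

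\begin{corollary}
The avoidance of a given pattern,
and membership of any finitely-based permutation class, are properties expressible in \TOTO.
\end{corollary}

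The plan is to build directly on Corollary~\ref{cor:classical_pattern_sentence}, which already establishes that the containment of a single given pattern $\pi$ is expressible in \TOTO by the sentence $\exists\, x_1,\dots,x_k\, \psi_\pi(x_1,\dots,x_k)$. The two assertions of the corollary follow by closing the collection of containment sentences under the Boolean operations available in first-order logic, exactly as flagged by the phrase ``Taking negations and conjunctions.''

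First I would handle avoidance. Since avoiding $\pi$ is by definition the negation of containing $\pi$, and \TOTO is a first-order theory closed under negation, the sentence $\neg\,\exists\, x_1,\dots,x_k\, \psi_\pi(x_1,\dots,x_k)$ expresses the avoidance of $\pi$: a permutation $\sigma$ satisfies it precisely when no tuple of elements forms an occurrence of $\pi$, i.e.\ when $\sigma$ avoids $\pi$. This is immediate from Proposition~\ref{prop:classical_pattern} and the semantics of $\neg$.

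Next I would treat membership in a finitely-based class. Let $\C = \Av(B)$ with $B = \{\pi_1,\dots,\pi_m\}$ finite. By definition, $\sigma \in \C$ if and only if $\sigma$ avoids every pattern in $B$ simultaneously. I would therefore take the conjunction of the avoidance sentences constructed in the previous step, namely
\[
\bigwedge_{j=1}^{m} \neg\,\exists\, x_1,\dots,x_{k_j}\, \psi_{\pi_j}(x_1,\dots,x_{k_j}),
\]
where $k_j$ is the size of $\pi_j$. Because first-order formulas are closed under conjunction, this is a legitimate sentence of \TOTO, and a permutation satisfies it exactly when it avoids each $\pi_j$, that is, exactly when it lies in $\C$.

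There is no real obstacle here: the content is entirely in Proposition~\ref{prop:classical_pattern} and Corollary~\ref{cor:classical_pattern_sentence}, and the corollary is a routine closure argument. The only point worth a word of care is finiteness of the basis: a conjunction must be finite to be a first-order sentence, so the argument genuinely uses that $B$ is finite (or finitely-based classes with a chosen finite basis). This is exactly why the statement is restricted to finitely-based classes, and it signals the contrast with the infinitely-based case, which is not captured by a single \TOTO sentence by this method.
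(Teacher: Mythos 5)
Your proof is correct and follows exactly the paper's (very terse) argument: the paper dispatches this corollary with the single remark ``Taking negations and conjunctions, Corollary~\ref{cor:classical_pattern_sentence} immediately gives the following,'' and your negation-of-containment plus finite-conjunction-over-the-basis construction is precisely that argument written out in full. Your closing observation about why finiteness of the basis is genuinely needed is a sensible extra remark, consistent with the paper's restriction to finitely-based classes.
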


\begin{remark}
From a formula expressing that some sequence of elements in some permutation forms a given pattern $\pi$, 
we can always write a sentence to express the existence of an occurrence of $\pi$ 
(by introducing existential quantifiers). 
This has been used in Corollary~\ref{cor:classical_pattern_sentence} above in the case of classical patterns, 
but applies just as well to all other notions of patterns below. 
Therefore, we will only state the next results for specific occurrences (i.e., the analogue of Proposition~\ref{prop:classical_pattern}), 
leaving the existence of occurrences (i.e., the analogue of Corollary~\ref{cor:classical_pattern_sentence}) to the reader.
\end{remark}

\subsubsection{Generalized notions of patterns} \label{sec:generalized_patterns}

Several generalizations of the notion of classical patterns exist. 
The most common ones are recorded in~\cite{Bevan:brief},
and an overview with more such generalizations can be found in~\cite[Figure 3]{Ulfarsson:3stacks}. 
In the following, we use the notation (and some examples) of~\cite{Ulfarsson:3stacks}, and refer to this paper for the formal definitions. 

Among the earliest generalizations of patterns, some (like vincular or bivincular patterns)
indicate that some of the elements forming an occurrence of the underlying classical pattern must be consecutive (for $<_P$ or for $<_V$). 
This is further generalized by the notion of mesh patterns;
to form an occurrence of a mesh pattern, elements must
\begin{itemize}
  \item first form an occurrence of the corresponding classical pattern;
  \item in addition, some of the regions determined by these elements (whose set is recorded in a list $R$) 
    should be empty (see example below).
\end{itemize}
Such constraints are easily expressible in \TOTO. 

\begin{proposition}
\label{prop:mesh}
Let $(\pi,R)$ be any mesh pattern, with $\pi$ of size $k$. 
There exists a formula $\psi_{(\pi,R)}(x_1,\dots,x_k)$ of \TOTO 
to express the property that elements $(a_1,\dots,a_k)$ of a permutation $\sigma$ form an occurrence of the mesh pattern $(\pi,R)$.
%more precisely, for any permutation $\sigma$, 
%\[
% \sigma \text{ contains } (\pi,R) \text{ as a mesh pattern if and only if } \sigma \models \phi_{(\pi,R)}.
%\]
\end{proposition}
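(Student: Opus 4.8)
The plan is to reduce the statement about mesh patterns to the already-established case of classical patterns (Proposition~\ref{prop:classical_pattern}) and then add first-order clauses encoding the emptiness of the shaded regions. First I would recall the setup: a mesh pattern $(\pi,R)$ of size $k$ consists of the classical pattern $\pi$ together with a set $R \subseteq \{0,1,\dots,k\}^2$ recording which of the $(k+1)^2$ rectangular cells of the grid determined by an occurrence of $\pi$ must contain no points of $\sigma$. Thus an occurrence of $(\pi,R)$ is an occurrence $(a_1,\dots,a_k)$ of the classical pattern $\pi$ such that, for each shaded cell $(i,j) \in R$, no element of $\sigma$ lies in the open region between the $i\th$ and $(i+1)\th$ chosen points in position order and between the $j\th$ and $(j+1)\th$ chosen points in value order.

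The key step is to express the emptiness of a single cell $(i,j)$ in \TOTO. Given the occurrence $(x_1,\dots,x_k)$, the points are already sorted so that $x_1 <_P x_2 <_P \dots <_P x_k$ and $x_{\pi^{-1}(1)} <_V \dots <_V x_{\pi^{-1}(k)}$, exactly as in the formula of Proposition~\ref{prop:classical_pattern}. The cell $(i,j)$ corresponds to the points whose position lies strictly between $x_i$ and $x_{i+1}$ and whose value lies strictly between $x_{\pi^{-1}(j)}$ and $x_{\pi^{-1}(j+1)}$. I would introduce a fresh universally quantified variable $z$ and write that $z$ cannot fall into this cell. Care is needed at the boundary indices $i \in \{0,k\}$ and $j \in \{0,k\}$, where one of the two bounding points is absent and the corresponding inequality is simply dropped (i.e., the region is the half-infinite strip below $x_1$ or above $x_k$ in the relevant order). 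Concretely, for an interior cell the emptiness clause is
\[
\forall z \ \neg\bigl( x_i <_P z <_P x_{i+1} \et x_{\pi^{-1}(j)} <_V z <_V x_{\pi^{-1}(j+1)} \bigr),
\]
with the $<_P$ or $<_V$ bounds deleted on the appropriate side when $i$ or $j$ is $0$ or $k$.

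Then I would assemble the full formula as the conjunction of the classical-pattern formula $\psi_\pi(x_1,\dots,x_k)$ with one such emptiness clause for each $(i,j) \in R$:
\[
\psi_{(\pi,R)}(x_1,\dots,x_k) = \psi_\pi(x_1,\dots,x_k) \et \bigwedge_{(i,j) \in R} \chi_{i,j},
\]
where $\chi_{i,j}$ is the emptiness clause above. The correctness argument is then routine: by Proposition~\ref{prop:classical_pattern} the first conjunct holds exactly when $(a_1,\dots,a_k)$ is a classical occurrence of $\pi$, and under the resulting ordering of the points the cell $(i,j)$ is precisely the locus described by $\chi_{i,j}$, so each clause holds iff that cell is empty of other points of $\sigma$. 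I do not expect a genuine obstacle here; the only point demanding attention is the bookkeeping of the boundary cells (the outer rows and columns, $i$ or $j$ equal to $0$ or $k$), where a bounding variable is missing and the inequality must be dropped rather than referring to a nonexistent point. Since $R$ is finite and each clause uses only one extra quantified variable, the construction plainly yields a first-order formula of \TOTO.
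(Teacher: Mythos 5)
Your construction is correct and coincides with the paper's proof: the paper likewise takes the conjunction of $\psi_\pi$ with, for each $(i,j)\in R$, the clause $\neg\exists t\,(x_i <_P t <_P x_{i+1} \et x_{\pi^{-1}(j)} <_V t <_V x_{\pi^{-1}(j+1)})$, dropping the comparisons involving the nonexistent boundary points exactly as you describe. Your $\forall z\,\neg(\cdots)$ is just the logically equivalent form of that clause, so there is nothing further to add.
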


For instance, 
the formula corresponding to occurrences of the mesh pattern 
$(132, \{(0,2),(1, 2),(2,2)\}) = 
\begin{array}{c}\begin{tikzpicture}[scale=0.2] 
\draw[color=white, pattern=north east lines, pattern color=black] (0,2) rectangle (3,3);
\draw (0,1) -- (4,1);
\draw (0,2) -- (4,2);
\draw (0,3) -- (4,3);
\draw (1,0) -- (1,4);
\draw (2,0) -- (2,4);
\draw (3,0) -- (3,4);
\fill (1,1) circle (.3);
\fill (2,3) circle (.3);
\fill (3,2) circle (.3);
\end{tikzpicture}
\end{array}$ 
is
{\footnotesize
\begin{align*}
\psi_{132}(x,y,z) &\et \neg \exists t \, ( t <_P x \et z <_V t <_V y) \\
& \et \neg \exists t \, ( x <_P t <_P y \et z <_V t <_V y) \et \neg \exists t \, ( y <_P t <_P z \et z <_V t <_V y).
\end{align*}
}
\begin{proof}
The idea of the previous example generalizes immediately as follows:% (although it becomes a bit heavier from the notational point of view):
{\footnotesize
\begin{align*}
 \psi_{(\pi,R)}(x_1,\dots,x_k) = & \psi_\pi(x_1,\dots,x_k) \\
 & \et \bigwedge_{(i,j) \in R} \neg \exists t \, ( x_i <_P t <_P x_{i+1} \et x_{\pi^{-1}(j)} <_V t <_V x_{\pi^{-1}(j+1)}),
\end{align*}}
where the possible comparisons with $x_0$, $x_{k+1}$, $x_{\pi^{-1}(0)}$ or $x_{\pi^{-1}(k+1)}$ in the last part of the formula are simply dropped.
\end{proof}

Mesh patterns can be further generalized, as decorated patterns. 
This is one of the three most general types of patterns that appear in the overview of~\cite[Figure 3]{Ulfarsson:3stacks}, 
the other two being barred patterns and ``grid classes''. 
We refer to~\cite{Ulfarsson:3stacks} for the definition of barred patterns and to~\cite[Section 4.3]{Vatter:survey} for the definition of grid classes (called generalized grid classes therein). 
It is easy to generalize the ideas of the previous proofs
to show that all properties of containing such patterns are expressible in \TOTO, 
although it becomes notationally more painful to write it in full generality. 
We therefore only record the following results, without proof, but illustrated with examples. 

\begin{proposition}
Let $(\pi,\bm{\mathcal{C}})$ be any decorated pattern. 
There exists a \TOTO formula with $k = |\pi|$ free variables %$\psi_{(\pi,\bm{\mathcal{C}})}$
to express the property that elements $(a_1,\dots,a_k)$ of a permutation
$\sigma$ form an occurrence of $(\pi,\bm{\mathcal{C}})$.
%more precisely, for any permutation $\sigma$, 
%\[
% \sigma \text{ contains } (\pi,\bm{\mathcal{C}}) \text{ as a decorated pattern if and only if } \sigma \models \phi_{(\pi,\bm{\mathcal{C}})}.
%\]
\end{proposition}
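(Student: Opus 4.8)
The plan is to mimic the proof just given for mesh patterns, pushing the same idea one level up the generalization hierarchy. Recall that a decorated pattern $(\pi, \bm{\mathcal{C}})$ consists of an underlying classical pattern $\pi$ of size $k$, together with a decoration that attaches to each of the regions determined by the $k$ points (the cells of the $(k+1)\times(k+1)$ grid cut out by the points of $\pi$) not merely the requirement ``be empty'' as for mesh patterns, but the more flexible requirement that the points lying inside that region \emph{avoid} some prescribed permutation class $\mathcal{C}$. Thus an occurrence of $(\pi, \bm{\mathcal{C}})$ is a choice of $k$ elements $a_1, \dots, a_k$ forming an occurrence of $\pi$ (expressible by $\psi_\pi$ from Proposition~\ref{prop:classical_pattern}), such that, for each decorated region, the subpermutation induced by the points of $\sigma$ falling into that region contains no occurrence of the forbidden pattern(s) of the attached class.

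First I would fix notation for the regions. As in the mesh proof, the $k$ chosen points split the diagram into cells indexed by a pair $(i,j)$ with $0 \le i,j \le k$; the cell $(i,j)$ is the set of elements $t$ with $x_i <_P t <_P x_{i+1}$ and $x_{\pi^{-1}(j)} <_V t <_V x_{\pi^{-1}(j+1)}$, with the boundary comparisons against $x_0, x_{k+1}, x_{\pi^{-1}(0)}, x_{\pi^{-1}(k+1)}$ simply dropped. I would introduce an abbreviation $\rho_{(i,j)}(t; x_1,\dots,x_k)$ for the quantifier-free formula asserting ``$t$ lies in cell $(i,j)$''. This is exactly the inner predicate already appearing inside the mesh-pattern formula.

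Next I would handle the decoration on a single cell. Suppose the cell $(i,j)$ is decorated by a class $\mathcal{C}$ with (finite) basis $B$; the requirement is that the points of $\sigma$ inside that cell avoid every $\beta \in B$. For a single forbidden pattern $\beta$ of size $m$, ``the cell contains an occurrence of $\beta$'' is expressed by $\exists t_1 \dots \exists t_m \big( \bigwedge_{\ell=1}^m \rho_{(i,j)}(t_\ell; x_1, \dots, x_k) \et \psi_\beta(t_1, \dots, t_m) \big)$, and avoidance is its negation. Taking a conjunction over $\beta \in B$ and then over all decorated cells, and conjoining with $\psi_\pi$, yields the desired formula $\psi_{(\pi, \bm{\mathcal{C}})}$. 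The correctness check is routine: the relativized quantifiers range exactly over the points in the cell, so $\psi_\beta$ evaluated on them tests containment of $\beta$ within the induced subpermutation, matching the definition of a decorated occurrence.

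The one genuine obstacle is the finite-basis issue: decorated patterns permit decorating a cell by an \emph{arbitrary} permutation class, and a class with an infinite basis cannot be captured by a finite conjunction of avoidance conditions, since \TOTO formulas are finite. I expect this to be the crux, and the clean resolution is to observe that within a single bounded cell of a fixed occurrence only finitely much is at stake: one only ever needs to forbid those basis elements of $\mathcal{C}$ that are small enough to be relevant, but in full generality an infinite basis still resists a single finite formula. Consequently the honest statement is that the construction goes through verbatim whenever each decorating class is finitely based (which covers all decorated patterns arising in practice, and in particular all those in~\cite{Ulfarsson:3stacks}); this is presumably why the authors state the result without proof and ``illustrated with examples'' rather than claiming uniform expressibility for every conceivable decoration.
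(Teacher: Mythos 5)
Your construction is essentially the one the paper intends: the authors give no general proof, only the example formula, and that example is exactly your recipe of relativizing the quantifiers of $\psi_\beta$ to the decorated region and conjoining with $\psi_\pi$. Two remarks. First, your "genuine obstacle" is not one: in a decorated pattern $(\pi,\bm{\mathcal{C}})$ each decoration is, by definition (see the notation $\{((1,1),12)\}$ in the paper's example and the source \cite{Ulfarsson:3stacks}), a pair consisting of a region and a \emph{pattern} (or finitely many patterns) to be avoided there -- not an arbitrary permutation class -- so the forbidden set is finite by definition and no finite-basis hypothesis needs to be added; your closing paragraph mischaracterizes why the authors omit the proof (it is notational routine, not a hidden hypothesis). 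Second, a decorated region may be a \emph{union} of cells, and unlike the mesh case you cannot treat the cells of a region independently, since an occurrence of the forbidden pattern may straddle several cells; your write-up only relativizes to a single cell $(i,j)$. The fix is immediate -- take the relativizing predicate to be $\bigvee_{(i,j)\in R}\rho_{(i,j)}(t;x_1,\dots,x_k)$ over the cells of the region -- but it should be said, since this is precisely the point where the decorated case genuinely differs from the mesh case.
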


For instance, an occurrence of the decorated pattern $\textsf{dec} = (21, \{((1,1),12)\} ) =
\begin{array}{c}\begin{tikzpicture}[scale=0.25] 
\draw[color=white, pattern=north east lines, pattern color=black] (1,1) rectangle (2,2);
\draw (0,1) -- (3,1);
\draw (0,2) -- (3,2);
\draw (1,0) -- (1,3);
\draw (2,0) -- (2,3);
\fill (1,2) circle (.3);
\fill (2,1) circle (.3);
\fill (1.3,1.3) circle (.13);
\fill (1.6,1.6) circle (.13);
\end{tikzpicture}
\end{array}$
is an occurrence $(a,b)$ of the classical pattern $21$
such that the middle region delimited by $a$ and $b$ does not contain a pattern $12$.
This corresponds to the \TOTO formula 
% {\footnotesize 
\[
\psi_{21}(x,y) \et \neg \Big(\exists t \exists u\, \big[ (a <_P t,u <_P b) \et (b <_V t,u <_V a) \big] 
\et \psi_{12}(t,u)\Big).
\]
% }
Here, $(a <_P t,u <_P b)$ is a short notation for $(a <_P t <_P b)  \et (a <_P u <_P b)$.
We will use similar abbreviations in the following.

\begin{proposition}
Let $\pi$ be any barred pattern. 
There exists a \TOTO formula, %$\psi_{\pi}$ 
whose number $k$ of free variables is the number of unbarred elements in $\pi$,
to express the property that
elements $(a_1,\dots,a_k)$ of a permutation
$\sigma$ form an occurrence of $\pi$.
%more precisely, for any permutation $\sigma$, 
%\[
% \sigma \text{ contains } \pi \text{ as a barred pattern if and only if } \sigma \models \phi_{\pi}.
%\]
\end{proposition}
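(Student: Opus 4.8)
The plan is to follow the same recipe that worked for classical, mesh, and decorated patterns, but now applied to the definition of barred patterns. Recall that a barred pattern $\pi$ is a permutation in which some entries are marked (barred). An occurrence of $\pi$ is, informally, an occurrence of the sub-pattern formed by the \emph{unbarred} entries that \emph{cannot} be extended to a full occurrence of the underlying classical permutation $\pi$ (read with all its entries) by adjoining elements playing the roles of the barred entries. First I would fix notation: let $U \subseteq \{1,\dots,|\pi|\}$ be the set of positions of the unbarred entries, let $\bar\pi$ be the classical pattern on the unbarred entries (suitably reduced), and let the free variables $x_1,\dots,x_k$ (with $k = |U|$) correspond to the unbarred entries in position order.

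Next I would build the formula in two layers. The inner layer asserts, via \cref{prop:classical_pattern}, that the chosen elements form an occurrence of the reduced unbarred pattern $\bar\pi$. The outer layer negates the existence of witnesses for the barred entries: I would introduce one existentially quantified variable $t_b$ for each barred position $b$, and conjoin the constraints saying that, together with the $x_i$, these $t_b$ form an occurrence of the full pattern $\pi$. Crucially, each $t_b$ must be placed in the correct cell relative to the already-chosen points, which is exactly a conjunction of $<_P$ and $<_V$ comparisons of the form $x_i <_P t_b <_P x_{i'}$ and $x_j <_V t_b <_V x_{j'}$ dictated by the position and value of the barred entry inside $\pi$; these are precisely the kind of comparisons already used in the proofs of \cref{prop:mesh} and the decorated case, with comparisons to nonexistent boundary variables simply dropped. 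The whole formula then has the shape $\psi_{\bar\pi}(x_1,\dots,x_k) \et \neg \exists t_{b_1} \cdots \exists t_{b_m}\, \Phi$, where $\Phi$ is the finite conjunction of order constraints placing the barred witnesses so as to complete an occurrence of $\pi$.

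The one genuine subtlety is that the standard definition of barred pattern occurrence only forbids extensions in which the unbarred entries, \emph{together with} the barred witnesses, realize $\pi$ in the original permutation; one must be careful that $\Phi$ encodes the relative order of \emph{all} of $\pi$'s entries (barred and unbarred) and not merely the barred ones in isolation. I expect this bookkeeping --- correctly translating each barred entry's rank in $\pi$ into the interleaved order constraints among the $x_i$ and the $t_b$ --- to be the only real obstacle, and it is purely notational rather than conceptual. Since all the required relations are finite Boolean combinations of atomic $<_P$ and $<_V$ comparisons together with existential and universal quantifiers, the resulting expression is a bona fide \TOTO formula, which establishes the proposition. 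As the authors note, writing this out in full generality is notationally painful but mechanical, so I would present the construction schematically and illustrate it on a small example, exactly as in the preceding propositions.
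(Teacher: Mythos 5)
Your proposal is correct and matches the paper's approach exactly: the paper states this proposition without proof, giving only the example formula $\psi_{12}(x,y) \et \neg \big(\exists u \exists t\, \psi_{1324}(u,x,t,y) \big)$ for $\bar{1}3\bar{2}4$, which has precisely the shape $\psi_{\bar\pi}(\mathbf{x}) \et \neg \exists \mathbf{t}\, \Phi$ you describe, with $\Phi$ asserting that the interleaved variables form an occurrence of the full underlying pattern. Your identification of the bookkeeping for interleaving the barred witnesses among the unbarred variables as the only (purely notational) obstacle is consistent with the authors' own remark that writing the general formula is "notationally more painful" but routine.
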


Consider for instance the barred pattern $\bar{1}3\bar{2}4$
(which has been chosen since it cannot be expressed using decorated patterns, see~\cite{Ulfarsson:3stacks}). 
Informally, an occurrence of $\bar{1}3\bar{2}4$ is a pair $(b,d)$ of two elements in increasing order 
(i.e., in the order induced by the unbarred elements),
which cannot be extended to a quadruple $(a,b,c,d)$ that is an occurrence of $1324$.

It is easy to see that the following formula expresses occurrences of the barred pattern $\bar{1}3\bar{2}4$:
{\footnotesize
\[
% \psi_{\bar{1}3\bar{2}4}(x,y):=
\psi_{12}(x,y) \et \neg \Big(\exists u \exists t\, \psi_{1324}(u,x,t,y) \Big).
\]
}

\begin{proposition}
  \label{prop:grid}
Let $M$ be a matrix whose entries are finitely-based permutation classes. 
There exists a \TOTO sentence $\phi_{M}$ to express the property of membership to the grid class $\Grid(M)$.
%more precisely, for any permutation $\sigma$, 
%\[
% \sigma \in \Grid(M) \text{ if and only if } \sigma \models \phi_{M}.
%\]
\end{proposition}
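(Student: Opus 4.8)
The plan is to existentially quantify over the finitely many grid lines that define a gridding, and then to express each cell constraint by relativising a pattern-avoidance formula to the relevant cell. Write $M = (M_{ij})_{1 \le i \le h,\, 1 \le j \le w}$, where the column index $j$ labels the $w$ vertical strips (position intervals) and the row index $i$ labels the $h$ horizontal strips (value intervals), so that $\sigma \in \Grid(M)$ means that the positions of $\sigma$ can be split into $w$ consecutive intervals, and its values into $h$ consecutive intervals, in such a way that for every $(i,j)$ the points lying in the $j$-th position interval and the $i$-th value interval form a pattern of $M_{ij}$. The crucial observation is that such a splitting is determined by only $w-1$ vertical and $h-1$ horizontal cut lines, a number that depends on $M$ but not on the size of $\sigma$; hence the cuts can be captured by a \emph{bounded} number of quantified elements, which is what makes membership first-order expressible.

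Concretely, I would introduce existentially quantified elements $p_1, \dots, p_{w-1}$ and $q_1, \dots, q_{h-1}$ required to satisfy $p_1 \le_P \cdots \le_P p_{w-1}$ and $q_1 \le_V \cdots \le_V q_{h-1}$ (writing $a \le_P b$ as shorthand for $a <_P b \vee a = b$, and similarly for $\le_V$), where $p_j$ marks the start of the $(j+1)$-st position strip and $q_i$ the start of the $(i+1)$-st value strip. Using only comparisons of a free variable $x$ with the $p$'s and the $q$'s, one writes a quantifier-free formula $\mathrm{cell}_{ij}(x)$ asserting that $x$ lies in the $j$-th position strip and the $i$-th value strip; the ordering of the separators guarantees that every point then lies in exactly one cell. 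Since each $M_{ij}$ is finitely based, say $M_{ij} = \Av(B_{ij})$ with $B_{ij}$ finite, the requirement that the content of cell $(i,j)$ belongs to $M_{ij}$ is the finite conjunction, over $\beta \in B_{ij}$ with $m = |\beta|$, of the relativised avoidance $\neg \exists y_1 \cdots \exists y_m \big( \bigwedge_{l=1}^{m} \mathrm{cell}_{ij}(y_l) \wedge \psi_\beta(y_1, \dots, y_m) \big)$, where $\psi_\beta$ is the pattern formula of \cref{prop:classical_pattern}. The desired sentence is obtained by existentially quantifying $p_1, \dots, p_{w-1}, q_1, \dots, q_{h-1}$ in front of the conjunction over all $(i,j)$ of these cell constraints. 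It is precisely finite-basedness that keeps each cell constraint a finite formula, and hence the whole construction a genuine first-order sentence.

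The step requiring the most care, and the main obstacle, is the faithful encoding of the cut lines by elements of $\sigma$, in particular the degenerate situations in which some strip is empty. With the convention above, interior empty strips are produced correctly by letting consecutive separators coincide, and the first strip can be made empty by taking $p_1$ to be the position-minimum; however, the separator $p_{w-1}$ always lies in the last position strip, so that strip cannot be made empty by this encoding (and symmetrically for the last value strip). Since an empty cell trivially satisfies any class, I would absorb these boundary cases by taking a finite disjunction of the construction over all top-left submatrices $M^{(h',w')} = (M_{ij})_{1 \le i \le h',\, 1 \le j \le w'}$, the disjunct for $(h',w')$ corresponding to griddings in which all strips beyond index $w'$ (resp. $h'$) are empty while strip $w'$ (resp. $h'$) is not. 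As $h$ and $w$ are fixed this disjunction is finite. The remaining work is then routine verification: every valid gridding of a $\sigma \in \Grid(M)$, after taking $w'$ and $h'$ to be its largest nonempty column and row indices, is realised by the corresponding direct encoding, and conversely any $\sigma$ satisfying some disjunct admits a valid gridding; this yields a \TOTO sentence $\phi_M$ expressing exactly membership in $\Grid(M)$.
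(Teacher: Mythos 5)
Your proposal is correct and follows essentially the same route as the paper, which only sketches the construction on a $2\times 2$ example: existentially quantify a bounded number of elements encoding the grid lines, relativise the finite pattern-avoidance formulas of \cref{prop:classical_pattern} to each cell, and absorb the degenerate empty-strip cases (which the paper dismisses as ``a straightforward exercise'') into a finite disjunction. Your explicit treatment of the boundary strips via top-left submatrices is a clean way to carry out that exercise.
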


For instance, consider the grid class $\Grid(M)$ for the matrix 
$$M= \begin{pmatrix}
\Av(123) & \emptyset \\
\Av(21) & \Av(12) \end{pmatrix}.$$
Informally a permutation $\sigma$ is an element of $\Grid(M)$
if its diagram can be split in 4 regions (by one vertical and one horizontal lines that do not cross any dots)
such that 
\begin{itemize}
  \item the NW (resp. SW, resp. SE) region does not contain any occurrence of $123$ (resp. $21$, resp. $12$); 
  \item the NE corner is empty.
\end{itemize}
It is easy to see that the following sentence $\tilde{\phi}_{M}$ indicates the membership to $\Grid(M)$ with {\em SW-non-trivial decomposition}
(i.e. there is at least one point on the left of the vertical line and below the horizontal line that appear in the above definition).
Modifying it to get a sentence $\phi_M$ that indicates membership to $\Grid(M)$ is a straightforward exercise.
% {\footnotesize
\begin{align*}
  \tilde{\phi}_{M}=
 \exists \ell_v \exists \ell_h \, &
 \neg \Big(\exists x \exists y \exists z \, \big[(x,y,z \leq_P \ell_v) \et (x,y,z >_V \ell_h) \big] \et \psi_{123}(x,y,z)\Big)\\
\et &\neg \Big(\exists x \exists y \, \big[(x,y \leq_P \ell_v) \et (x,y \leq_V \ell_h) \big] \et \psi_{21}(x,y) \Big)\\
\et &\neg \Big(\exists x \exists y \, \big[(x,y >_P \ell_v) \et (x,y \leq_V \ell_h)\big] \et \psi_{12}(x,y) \Big) \\
\et & \neg \Big(\exists x \, \big[ (x >_P \ell_v) \et (x >_V \ell_h) \big]\Big).
\end{align*}
% }
The variables $\ell_v$ and $\ell_h$ correspond to the vertical and horizontal lines in the definition of grid classes.
Since we cannot quantify on lines that do not cross any dot, $\ell_v$ (resp. $\ell_h$) is the rightmost dot on the left of the vertical line 
(resp. the highest dot below the horizontal line). 
Note that, by definition, both dots exist in SW-non-trivial decompositions. 

\begin{remark}
  Consider a matrix $M$ such that each entry $M_{i,j}$ is a set of permutations 
  for which membership is expressible by a \TOTO sentence (e.g. containing/avoiding any given generalized pattern).
  Then membership to the set $\Grid(M)$ is expressible by a \TOTO sentence.

  These generalized {\em grid sets} (they are not {\em permutation classes} anymore) do not seem to have been introduced in the literature. 
  However, we want to point out that they provide a framework which generalizes both grid classes and
  avoidance of barred/decorated patterns and which still fits in the expressivity range of \TOTO.
\end{remark}

\subsection{Concepts related to substitution decomposition}\label{ssec:subst}

Substitution decomposition is an approach to the study of permutations and permutation classes 
which has proved very useful, in particular, but not only, for enumeration results.
We refer to \cite[Section 3.2]{Vatter:survey} for a survey on this technique.
In this section, we prove that standard concepts related to it are expressible in \TOTO.
We will also use the inflation operation defined below later in the paper.
%\medskip

Given $\pi$ a permutation of size $k$ and $k$ permutations $\sigma_1, \dots, \sigma_k$, the permutation 
$\pi[\sigma_1, \dots, \sigma_k]$ (called \emph{inflation of $\pi$ by $\sigma_1, \dots, \sigma_k$}) 
is the one whose diagram is obtained from that of $\pi$ by replacing each point $(i,\pi(i))$ by the diagram of $\sigma_i$ 
(which is then referred to as a block). 
An example is given in \cref{fig:Inflation}.
\begin{figure}[t]
 \[ \includegraphics[height=2.3cm]{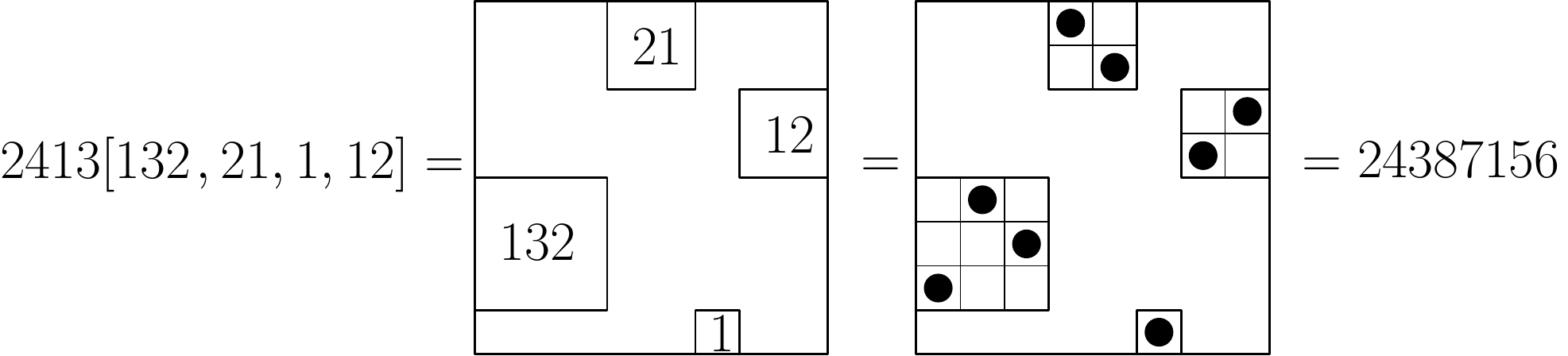}\]
  \caption{An example of inflation.}
  \label{fig:Inflation}
\end{figure}

A permutation that can be written as $\pi[\sigma_1, \dots, \sigma_k]$ is called \emph{$\pi$-decomposable}. 
When $\pi$ is increasing (resp. decreasing), we may write $\oplus$ (resp. $\ominus$) instead of $\pi$. 

\begin{proposition}
There exists a \TOTO sentence $\phi_\oplus$ to express the property that any given permutation is $\oplus$-decomposable. %;  
\end{proposition}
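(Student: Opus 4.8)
The plan is to reduce $\oplus$-decomposability to the existence of a single \emph{cut}, and then to encode that cut with the two-witness technique already used in the proof of Proposition~\ref{prop:grid}.

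First I recall that a permutation $\sigma$ of size $n$ is $\oplus$-decomposable precisely when there is an index $m$ with $1 \le m < n$ such that $\{\sigma(1),\dots,\sigma(m)\} = \{1,\dots,m\}$; graphically, the diagram splits into a nonempty lower-left block (positions $1,\dots,m$ carrying values $1,\dots,m$) and a nonempty upper-right block. Since \TOTO cannot quantify over the cut index $m$ nor over separating lines directly, I would instead quantify over two dots: $\ell_v$, intended as the rightmost dot of the lower-left block (in the $<_P$ order), and $\ell_h$, intended as the topmost dot of that block (in the $<_V$ order). The vertical cut line then sits just to the right of $\ell_v$ and the horizontal cut line just above $\ell_h$.

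With these witnesses, the decomposition is valid exactly when the two ``off-diagonal'' regions are empty: no dot lies weakly left of $\ell_v$ and strictly above $\ell_h$ (the NW region), and no dot lies strictly right of $\ell_v$ and weakly below $\ell_h$ (the SE region). Properness is ensured by also requiring that $\ell_v$ is not the $<_P$-maximum, so the upper-right block is nonempty (the lower-left block is automatically nonempty, as it contains $\ell_v$). This yields the sentence
\[
\phi_\oplus = \exists \ell_v \exists \ell_h \, \Big[ (\exists c \; \ell_v <_P c) \et \neg \exists b\, (b \leq_P \ell_v \et \ell_h <_V b) \et \neg \exists b\, (\ell_v <_P b \et b \leq_V \ell_h) \Big],
\]
where $b \leq_P \ell_v$ abbreviates $b <_P \ell_v \ou b = \ell_v$, and similarly for $\leq_V$.

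The main point to verify — and the step I expect to carry the real content — is that these two emptiness clauses automatically force the two thresholds to sit at the \emph{same} rank, so that a genuine cut index $m$ emerges even though $\ell_v$ and $\ell_h$ are chosen independently. Indeed, emptiness of the NW region says $\{b : b \leq_P \ell_v\} \subseteq \{b : b \leq_V \ell_h\}$, while emptiness of the SE region gives the reverse inclusion; hence these two sets of dots coincide, their common cardinality $m$ satisfies $1 \le m < n$ by the properness clause, and the lower-left block is exactly positions $1,\dots,m$ carrying values $1,\dots,m$. Conversely, any proper cut supplies the required witnesses by taking $\ell_v$ and $\ell_h$ to be the last dot (in position) and the highest dot (in value) of the lower block. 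I would stress that a naive attempt using a single witness fails, precisely because the position-rank and value-rank of one dot generally differ; it is the two independent thresholds, tied together by the emptiness conditions, that make the argument work.
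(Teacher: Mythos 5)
Your proof is correct and takes essentially the same route as the paper: you quantify over two witness dots $\ell_v$ and $\ell_h$ and assert that the set of dots weakly below $\ell_v$ in position order equals the set weakly below $\ell_h$ in value order (your two negated existentials are just the unfolded form of the paper's single biconditional $\forall x\,[(x \leq_P \ell_v) \leftrightarrow (x \leq_V \ell_h)]$), together with the same nonemptiness clause for the second block. The additional verification you supply — mutual inclusion forcing the two thresholds to determine the same cut index — is sound and merely makes explicit what the paper leaves to the reader.
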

Obviously, the same holds for $\ominus$-decomposable permutations.
The proof actually also easily extends to show that being an inflation of $\pi$, for any given $\pi$, 
is expressible by a \TOTO sentence. 

\begin{proof}
It is enough to take $\phi_\oplus$ to be the following sentence 
% {\footnotesize 
\[
\phi_\oplus= \exists \ell_v \exists \ell_h \,  \Big( \exists x \, [x >_P \ell_v]\Big)  
\et  \Big( \forall x \, \big[ (x \leq_P \ell_v) \leftrightarrow (x \leq_V \ell_h) \big]\Big).
\]
% }
% ($\leftrightarrow$ is the usual symbol for equivalence in propositional logic.)
Note that the part $( \exists x \, [x >_P \ell_v] )$ in the above sentence ensures that 
the second block in the $\oplus$-decomposition is not empty.
(For the first block, this is automatically ensured by the existence of $\ell_v$, for instance). 
We could have used equivalently $( \exists x \, [x >_V \ell_h] )$.
\end{proof}
\begin{remark}
  This is a variant of the statement and proof on grid classes (\cref{prop:grid}),
  where the decomposition has to be non trivial.
\end{remark}

%Generalizing this idea, it is not hard to see 
%(however notationally complicated to write in full generality) that the following also holds. 
%\begin{proposition}
%Let $\pi$ be any permutation. 
%There exists a \TOTO sentence $\phi_{\pi[..]}$ to express the property any given permutation is an inflation of the permutation $\pi$.
%\end{proposition}
%
%For example, the sentence $\phi_{132[..]}$ is 
%{\footnotesize 
%\begin{align*}
%\exists \ell_{v,1} \exists \ell_{v,2} & \exists \ell_{h,1} \exists \ell_{h,2} \,  (\ell_{v,1} <_P \ell_{v,2}) \et  \Big( \exists y \, [\ell_{v,2} <_P y] \Big) \\
%\et & \Big(\forall x \, \big[ (x \leq_P \ell_{v,1}) \leftrightarrow (x\leq_V \ell_{h,1}) \big]\Big) \et \Big(\forall x \, \big[ (\ell_{v,1} <_P x \leq_P \ell_{v,2}) \leftrightarrow (\ell_{h,2} <_V x) \big]\Big) %\\
%%\et & \Big(\forall x \, \big[ (\ell_{v,2} <_P x) \leftrightarrow (\ell_{h,1} <_V x \leq_V \ell_{h,2} ) \big]\Big).
%\end{align*}
%}
%
%%A simple permutation is one that does not contain any non-trivial interval.
%\todo[inline]{include definition of simple permutation?}

Simple permutations, i.e. permutations that cannot be obtained as a non-trivial inflation,
play a particularly important role in substitution decomposition.
They can be alternatively characterized as permutations
which do not contain a non-trivial interval, an interval
being a range of integers sent to another range of integers
by the permutation.
They can also be described in \TOTO. 

\begin{proposition}
\label{prop:simple}
There exists a \TOTO sentence $\phi_{simple}$ to express the property that any given permutation is simple. %; 
% more precisely, for any permutation $\sigma$, 
% \[
% \sigma \text{ is simple if and only if } \sigma \models \phi_{simple}.
% \]
\end{proposition}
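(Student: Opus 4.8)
The plan is to express simplicity by negating the existence of a nontrivial interval. Recall that an \emph{interval} of $\sigma$ is a set of positions that are consecutive for $<_P$ whose images are consecutive for $<_V$; a permutation is simple precisely when its only intervals are the singletons and the whole permutation. The key observation is that an interval is completely determined by the two ``boundary'' elements that delimit it in position order: if $a$ is the leftmost and $b$ the rightmost element (for $<_P$) of a set of consecutive positions, then the set is an interval exactly when every element strictly between $a$ and $b$ in position is also between $a$ and $b$ in value, \emph{and} conversely every element whose value lies strictly between the values of $a$ and $b$ has its position strictly between $a$ and $b$. So rather than quantifying over the interval itself (which we cannot do in first-order logic since it is a set), I would quantify over the two endpoints $a,b$ and write a formula asserting that they delimit an interval.

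First I would write an auxiliary formula $\mathrm{int}(x,y)$ expressing that the elements from $x$ to $y$ in position order form an interval. Writing $m = \min_{<_V}(x,y)$ and $M = \max_{<_V}(x,y)$ informally, the condition is the two-way closure: for all $t$, if $x <_P t <_P y$ then $m <_V t <_V M$, and if $m <_V t <_V M$ then $x <_P t <_P y$. Since we cannot directly name $m$ and $M$ but we do know the values of $x$ and $y$ (they are whichever of $x,y$ is smaller/larger for $<_V$), I would handle the two cases $x <_V y$ and $y <_V x$ either by a disjunction, or more cleanly by phrasing the value-betweenness using a helper abbreviation ``$t$ is strictly between $x$ and $y$ for $<_V$'' meaning $(x <_V t <_V y) \vee (y <_V t <_V x)$, and similarly for $<_P$ (though for $<_P$ we have fixed that $x <_P y$). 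Concretely,
\[
\mathrm{int}(x,y) := (x <_P y) \et \forall t \,\Big[ (x <_P t <_P y) \leftrightarrow \big( (x <_V t <_V y) \ou (y <_V t <_V x) \big) \Big].
\]

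Then $\sigma$ fails to be simple exactly when there exist two endpoints $a <_P b$ delimiting a proper nontrivial interval, i.e.\ an interval that is neither a singleton nor the whole permutation. A singleton is excluded automatically because $\mathrm{int}(x,y)$ demands $x <_P y$, so the interval has at least two elements. To exclude the whole permutation I would require the existence of some point outside the position-range $[a,b]$, that is some $z$ with $z <_P a$ or $b <_P z$. Putting this together, I would define
\[
\phi_{simple} := \neg\, \exists a \exists b \,\Big( \mathrm{int}(a,b) \et \exists z \,\big[ (z <_P a) \ou (b <_P z) \big] \Big),
\]
and the claim is that a permutation is simple if and only if it models $\phi_{simple}$ (with the convention that permutations of size $\le 2$ are treated as simple, which the formula handles correctly since no valid pair $a,b$ together with an outside point $z$ can exist).

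The main thing to verify carefully — and the only real obstacle — is the correctness of the biconditional inside $\mathrm{int}(x,y)$: one must check that the two-way position/value betweenness condition captures exactly ``the positions from $x$ to $y$ map to a contiguous block of values,'' and in particular that the \emph{converse} direction (every intermediate value forces an intermediate position) is what rules out ``holes'' in the value-range. The forward direction guarantees the image of the position-range is contained in a value-range of the right size, and the backward direction guarantees it fills that range; together they give exactly that the set is an interval. I would check this equivalence on the combinatorial definition of interval, and confirm the edge cases for small permutations, after which the rest is routine.
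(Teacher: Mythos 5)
Your overall strategy---negating the existence of a non-trivial interval, detected by quantifying over boundary elements together with a witness lying outside the positional range---is the same as the paper's, but your encoding of ``interval'' is incorrect, and the error sits exactly at the step you flagged as the one to verify. The formula $\mathrm{int}(x,y)$ identifies the value-range of the candidate interval with the value-interval spanned by $x$ and $y$ themselves; this presupposes that the positionally leftmost and rightmost elements of an interval achieve its minimum and maximum \emph{values}, which is false in general. Concretely, take $\sigma = 24135$: the positions $\{1,2,3,4\}$ carry the values $\{1,2,3,4\}$ and form a non-trivial proper interval (of simple pattern $2413$), so $\sigma$ is not simple. Its positional endpoints have values $2$ and $3$; no element has value strictly between $2$ and $3$, yet two elements lie strictly between them in position, so your biconditional fails for this pair, and one checks that $\mathrm{int}(a,b)$ fails for every pair of elements of $24135$. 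Hence your $\phi_{simple}$ wrongly declares $24135$ simple. The failure is systematic: an interval whose pattern is a simple permutation of size at least $4$ contains no smaller non-trivial interval to fall back on, and in a simple pattern $\pi$ one never has $\pi(1)\in\{1,k\}$, so the extreme values never sit at the two extreme positions.

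The paper avoids this by quantifying over \emph{four} boundary elements rather than two: $\ell_{v,1} <_P \ell_{v,2}$ delimiting the positional range and, independently, $\ell_{h,1},\ell_{h,2}$ delimiting the value range, with the closure condition $\forall x\,\big[(\ell_{v,1} \le_P x \le_P \ell_{v,2}) \leftrightarrow (\ell_{h,1} \le_V x \le_V \ell_{h,2})\big]$ using inclusive bounds. Your argument would be repaired by adopting this four-variable form, i.e.\ by letting the value boundaries be existentially chosen among all elements instead of being forced to equal $x$ and $y$.
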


\begin{proof}
  We simply take $\phi_{\text{simple}} = \neg \phi_{\text{int}}$, 
  where $\phi_{\text{int}}$ indicates the existence of a non-trivial interval
and is given as follows
% {\footnotesize 
\begin{align*}
  \phi_{\text{int}}:= \exists \ell_{v,1}& \exists \ell_{v,2} \exists \ell_{h,1} \exists \ell_{h,2}
[(\ell_{v,1} <_P \ell_{v,2}) \et \Big(\exists y [y<_P \ell_{v,1}] \vee [\ell_{v,2} <_P y]\Big)\\
\et & \Big(\forall x \, \big[ (\ell_{v,1} \leq_P x \leq_P \ell_{v,2}) 
\leftrightarrow (\ell_{h,1} \leq_V x \leq_V \ell_{h,2}) \big]\Big). 
\end{align*}
% }
Indeed, the second line indicates that there is an interval in the permutation between
the horizontal lines $\ell_{h,1}$ and $\ell_{h,2}$ and the vertical lines $\ell_{v,1}$ and $\ell_{v,2}$
(the dots corresponding to all these lines being included in the interval).
The first line then indicates that the interval is non-trivial.
\end{proof}

%\todo[inline]{Check that the second part of the first line is necessary. 
%From the discussion below the formula, it seems that it is not...}
%V: for me, it is, to avoid the interval being [1,n]. (there was a typo in the formula; corrected)

Note that Proposition~\ref{prop:simple} can also be seen as a corollary of Proposition~\ref{prop:mesh}, 
since being simple is expressible by the avoidance of a finite set of mesh patterns (see~\cite[Proposition 2.1]{Ulfarsson:3stacks}). 

\subsection{Sorting operators}\label{ssec:sorting}

Broadly speaking a sorting operator is nothing more than a function $S$ from the set of all permutations to itself that preserves size. 
We denote by $\I$ the set $\{12\dots n , n\geq 1\}$ of increasing monotone permutations. 
The \emph{$S$-sortable permutations} are just the permutations whose image under $S$ is an increasing permutation, i.e., $S^{-1}(\I)$. More generally, for a positive integer $k$ we call the set $S^{-k}(\I)$ the \emph{$S$-$k$-sortable permutations}. Of course if we are interested in actually using $S$ as an effective method of sorting it should be the case that we can guarantee that every permutation $\sigma$ is $S$-$k$-sortable for some $k$. One sufficient condition for this that frequently holds (and which could be taken as part of the definition of sorting operator) is that the set of inversions of $S(\sigma)$ should be  a subset of those of $\sigma$ with the inclusion being strict except on $\I$. Alternatively one could simply require that the number of inversions of $S(\sigma)$ should be less than the number of inversions of $\sigma$.

Suppose that $S$ is a sorting operator and, just for example, that $S(42531) = 24135$. 
What has really happened is that the positional order, $\prec_P$, of the original permutation ($4 \prec_P 2 \prec_P 5 \prec_P 3 \prec_P 1$, where we take the value order to be $1 \prec_V \dots \prec_V 5$) 
has been replaced by a new one, $\prec_{S(P)}$, where $2 \prec_{S(P)} 4 \prec_{S(P)} 1 \prec_{S(P)} 5 \prec_{S(P)} 3$ (keeping $\prec_V$ unchanged). 
So, another view of sorting operators is that they simply change the positional order of a permutation, i.e., that they are isomorphism-preserving maps taking permutations $(X, \prec_P, \prec_V)$ to other permutations $(X, \prec_{S(P)}, \prec_V)$. 

We will be particularly interested in the case where $\prec_{S(P)}$ can be expressed by a formula in \TOTO. 
We say that a sorting operator $S$ is \emph{\TOTO-definable} if there is a \TOTO formula $\phi_{SP} (x,y)$ with two free variables such that 
for all permutations $\sigma=(X, \prec_P, \prec_V)$ and all $a, b \in X$, $a \prec_{S(P)} b$ if and only if $(\sigma,a,b) \models \phi_{SP}(x,y)$.
Similarly, we say that a set $\T$ of permutations is \emph{definable in \TOTO} when there exists a \TOTO sentence whose models are exactly the permutations in $\T$. 
\begin{proposition}
\label{prop:S-sortable_definable}
Suppose that a sorting operator, $S$, is \TOTO-definable. 
Then, for any set of permutations $\T$ definable in \TOTO, $S^{-1}(\T)$ is also definable in \TOTO.
In particular, for any fixed $k$, there exists a \TOTO sentence
whose models are exactly the $S$-$k$-sortable permutations. 
\end{proposition}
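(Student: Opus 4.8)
The plan is to reduce membership in $S^{-1}(\T)$ to a statement about the sorted permutation, and to realize this reduction syntactically by substituting inside the sentence that defines $\T$. By hypothesis $\T$ is defined by some \TOTO sentence $\psi$ (its models are exactly the permutations of $\T$), and $S$ is \TOTO-definable via a formula $\phi_{SP}(x,y)$ expressing the new positional order: for $\sigma=(X,<_P,<_V)$ and $a,b\in X$ we have $a \prec_{S(P)} b$ iff $(\sigma,a,b)\models\phi_{SP}(x,y)$. Since a sorting operator only alters the positional order and leaves the value order untouched, $S(\sigma)=(X,\prec_{S(P)},<_V)$. Hence $\sigma\in S^{-1}(\T)$ iff $S(\sigma)\models\psi$, and all I need is a \TOTO sentence that, when evaluated in $\sigma$, asserts exactly ``$S(\sigma)\models\psi$''.

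First I would define $\psi^S$ to be the sentence obtained from $\psi$ by replacing every atomic subformula $u<_P v$ \emph{of $\psi$} (simultaneously, in one pass) by $\phi_{SP}(u,v)$, i.e.\ the formula $\phi_{SP}$ with its two free variables renamed to $u$ and $v$, while leaving the atoms involving $<_V$ and $=$ unchanged. Before inserting each copy of $\phi_{SP}$, I would rename all of its bound variables to fresh variables not occurring in $\psi$, so that no capture is introduced; note also that the $<_P$ atoms occurring inside an inserted copy of $\phi_{SP}$ are not themselves rewritten, so there is no regress. The result $\psi^S$ is again a \TOTO sentence.

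The core step is the semantic correspondence, proved by induction on subformulas: for every subformula $\theta(\bar w)$ of $\psi$ and every assignment $\bar a$ of its free variables in $X$,
\[
(\sigma,\bar a)\models \theta^S(\bar w)\quad\Longleftrightarrow\quad (S(\sigma),\bar a)\models\theta(\bar w).
\]
The only nonroutine case is the atom $u<_P v$: evaluating $\theta^S=\phi_{SP}(u,v)$ in $\sigma$ returns, by \TOTO-definability of $S$, whether $a_u\prec_{S(P)}a_v$, which is precisely the interpretation of $<_P$ in $S(\sigma)$. The $<_V$ atoms match because $S$ leaves the value order fixed, and the Boolean connectives and quantifiers pass through in the standard way (this is exactly where the capture-avoidance matters). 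Applying the equivalence at the level of the sentence $\psi$ gives $\sigma\models\psi^S \Leftrightarrow S(\sigma)\models\psi \Leftrightarrow \sigma\in S^{-1}(\T)$, so $\psi^S$ defines $S^{-1}(\T)$.

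For the final assertion I would iterate. The set $\I$ of increasing permutations is definable in \TOTO, for instance by $\forall x\,\forall y\,(x<_P y \to x<_V y)$ (equivalently, avoidance of $21$). Since the $S$-$k$-sortable permutations are $S^{-k}(\I)=S^{-1}(\cdots S^{-1}(\I)\cdots)$, a straightforward induction on $k$ — applying the first part once at each step, starting from the definability of $\I$ — produces a \TOTO sentence whose models are exactly the $S$-$k$-sortable permutations. The only genuine subtlety in the whole argument is the bookkeeping needed to avoid variable capture when a quantifier-bearing formula is substituted for an atom; everything else is a standard first-order interpretation (relativization) argument.
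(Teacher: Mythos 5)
Your proposal is correct and follows essentially the same route as the paper: substitute $\phi_{SP}(x,y)$ for each atom $x<_P y$ in the sentence defining $\T$, verify the semantic correspondence, and iterate starting from the definability of $\I$. The only difference is that you spell out the capture-avoidance bookkeeping and the induction on subformulas, which the paper leaves implicit.
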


\begin{proof}
Let $\T$ be a set of permutations definable in \TOTO
and take any sentence $\phi_{\T}$ that defines $\T$.
Replacing every occurrence of $x <_P y$ in $\phi_{\T}$ by $\phi_{SP}(x,y)$ 
(for any variables $x$ and $y$), we get a new sentence $\phi_{S\T}$. 
Then $(X, \prec_P, \prec_V) \models \phi_{S\T}$ if and only if $(X, \prec_{S(P)}, \prec_V) \models \phi_{\T}$, i.e., if and only if $(X, \prec_P, \prec_V) \in S^{-1}(\T)$.
This shows that $S^{-1}(\T)$ is definable in \TOTO.

The second statement of \cref{prop:S-sortable_definable} follows since $\I$ is definable in \TOTO by the sentence:
\[
\forall x \forall y \, \left( x <_P y \leftrightarrow x <_V y \right). \qedhere
\]
\end{proof}

\subsubsection{Sorting with a stack}

We start with the stack sorting operator \sortS, which has been considered most often in the permutation patterns literature.
This operator \sortS takes a permutation $\sigma$ and passes it through a stack, 
which is a \emph{Last In First Out} data structure. 
The contents of the stack are always in increasing order (read from top to bottom) i.e., an element can only be pushed onto the stack if it is less than the topmost element of the stack.
So, when a new element of the permutation is processed either it is pushed onto the stack if possible, or pops are made from the stack until it can be pushed (which might not be until the stack is empty). 
When no further input remains, the contents of the stack are flushed to output. The ordered sequence of output elements that results when a permutation $\sigma$ is processed is the permutation $\sortS(\sigma)$. 
For instance, for $\sigma \in \S_3$, $\sortS(\sigma) = 123$ except for $\sigma=231$. Indeed, $\sortS(231) = 213$ because 2 is pushed onto the stack and then popped to allow 3 to be pushed, 
then 1 is pushed, and then the stack is flushed.

\begin{proposition}
\label{prop:stackTOTOdef}
The stack sorting operator \sortS is \TOTO-definable. 
\end{proposition}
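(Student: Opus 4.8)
The plan is to read off the output order $\prec_{S(P)}$ directly from the mechanics of the stack, and then translate the resulting combinatorial description into the two orders $<_P$ and $<_V$. The key is to pin down, for each element, the exact moment at which it leaves the stack. Given an element $x$ of $\sigma$, I will call its \emph{popping element} the leftmost (in position order) element $w$ with $x <_P w$ and $x <_V w$, i.e.\ the first element occurring after $x$ whose value exceeds that of $x$; such a $w$ need not exist. First I would prove, by following the stack bookkeeping, that $x$ is output precisely when its popping element is processed: all entries sitting above $x$ at that instant have smaller value than $x$, hence than $w$, so they are popped first and $x$ follows; and that $x$ is instead flushed at the very end exactly when it has no popping element (equivalently, when $x$ is a right-to-left maximum).

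Granting this, I would establish the comparison rule governing $\prec_{S(P)}$. For distinct elements $x,y$: (i) if both have popping elements, then $x$ precedes $y$ in the output iff the popping element of $x$ occurs strictly before that of $y$ in position order, or the two popping elements coincide and $x <_V y$ — the latter because, within a single pop event, the stack empties from the top, i.e.\ in increasing order of value; (ii) if $x$ has a popping element and $y$ does not, then $x$ (output during the run) precedes $y$ (output at the flush); (iii) if neither has a popping element, then both are flushed, in increasing order of value, so $x$ precedes $y$ iff $x <_V y$. Every worked example (e.g.\ $42531$) confirms this rule, and the three cases are mutually exclusive according to whether $x,y$ do or do not admit popping elements.

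The translation to \TOTO is then routine, since every ingredient above is a statement about $<_P$ and $<_V$. The predicate ``$w$ is the popping element of $x$'' is
\[
\theta_{\mathrm{pop}}(x,w) := (x <_P w) \wedge (x <_V w) \wedge \forall z\big[(x <_P z \wedge x <_V z) \rightarrow w \leq_P z\big],
\]
and ``$x$ has a popping element'' is $\eta(x) := \exists z\,(x <_P z \wedge x <_V z)$. The desired formula is the disjunction
\[
\phi_{SP}(x,y) := \Phi_1 \vee \Phi_2 \vee \Phi_3,
\]
where
\begin{align*}
\Phi_1 &:= \exists u\,\exists w\,\Big[\theta_{\mathrm{pop}}(x,u)\wedge\theta_{\mathrm{pop}}(y,w)\wedge\big((u<_P w)\vee(u=w\wedge x<_V y)\big)\Big],\\
\Phi_2 &:= \eta(x)\wedge\neg\eta(y),\\
\Phi_3 &:= \neg\eta(x)\wedge\neg\eta(y)\wedge(x<_V y),
\end{align*}
covering cases (i), (ii), (iii) respectively. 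Verifying $(\sigma,a,b)\models\phi_{SP}$ iff $a\prec_{S(P)}b$ then amounts to combining the fact that $\theta_{\mathrm{pop}}$ correctly identifies popping elements with the comparison rule, which will be a direct case check.

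The main obstacle is the first step: justifying the output-time description rigorously, and in particular the tie-breaking. One must argue carefully that when several elements are popped during the processing of a single entry $w$, they are exactly the elements then on the stack whose popping element is $w$, and that they leave in increasing order of value; and separately that the flush outputs the right-to-left maxima in increasing order of value, after everything popped during the run. These follow from the elementary invariant that the stack is always increasing read from top to bottom, but the invariant must be stated and maintained precisely before the explicit formula and its correctness can be read off mechanically. An alternative, should the direct bookkeeping prove cumbersome, is to start from the recursive identity $\sortS(\alpha\, n\, \beta)=\sortS(\alpha)\,\sortS(\beta)\,n$ (with $n$ the maximal value) and unfold it; but I expect the direct pop-time analysis to yield the explicit \TOTO formula most transparently.
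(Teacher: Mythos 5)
Your proposal is correct, but it is organized differently from the paper's proof. The paper argues \emph{pairwise}: it fixes two elements $a,b$ and splits into three cases according to their relative position and value orders ($a \prec_P b \wedge a \prec_V b$; $a \prec_P b \wedge b \prec_V a$; $b \prec_P a$), justifying in each case directly from the stack mechanics when $a \prec_{\sortS(P)} b$ holds. This yields the quantifier-depth-one formula
$\left( x <_P y \et ( x <_V y \ou \exists z ( x <_P z <_P y \et x <_V z)) \right) \ou \left( y <_P x \et x <_V y \et \neg \exists z ( y <_P z <_P x \et y <_V z) \right)$,
from which the paper then reads off Knuth's $231$ characterization almost immediately. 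You instead prove a \emph{global} lemma locating the exact output time of each element (it is popped when its ``popping element'' --- the first later, larger element --- is processed, or flushed if none exists), and then derive the comparison rule by comparing pop times, breaking ties by value. Your three cases (i)--(iii) and the resulting formula $\Phi_1 \vee \Phi_2 \vee \Phi_3$ are correct --- I checked them against the stack invariant and against the paper's formula, and they agree (your example $\sortS(42531)=24135$ also checks out). What each approach buys: yours is conceptually cleaner in that the correctness of the formula reduces to a single statement about when each element is output, and it generalizes more readily to questions about the output position of a single element; the paper's is more economical, producing a formula of quantifier depth $1$ rather than $3$ (your $\theta_{\mathrm{pop}}$ nests a $\forall z$ inside $\exists u\,\exists w$), which matters if one cares about the quantifier depth of the resulting sentences for $\sortS$-$k$-sortability, and it exposes the pattern $231$ directly. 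The remaining work you flag --- maintaining the invariant that the stack is increasing from top to bottom --- is exactly the same informal stack reasoning the paper also relies on, so there is no gap relative to the paper's own level of rigour.
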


It follows immediately from \cref{prop:S-sortable_definable} that 
\begin{corollary}
\label{cor:stacksortable}
For any $k\geq 1$ the set of permutations sortable by $\sortS^k$ is described by a sentence in \TOTO. 
\end{corollary}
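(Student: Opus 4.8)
The final statement, \cref{cor:stacksortable}, is an immediate consequence of \cref{prop:stackTOTOdef} together with \cref{prop:S-sortable_definable}: once we know that \sortS is \TOTO-definable, we take $\T = \I$ (which is \TOTO-definable, as recorded in the proof of \cref{prop:S-sortable_definable}) and apply \cref{prop:S-sortable_definable} with the integer $k$ to obtain a \TOTO sentence whose models are exactly $\sortS^{-k}(\I)$, i.e.\ the permutations sortable by $\sortS^k$. So all the real work lies in \cref{prop:stackTOTOdef}, and I describe how I would establish it.

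The plan is to exhibit an explicit formula $\phi_{SP}(x,y)$ describing the new positional order $\prec_{S(P)}$ produced by a single pass of the stack (the value order $\prec_V$ being unchanged). First I would analyse the mechanics of the pass. Reading the input in positional order and recalling that the stack is kept increasing from top to bottom, an incoming element forces out (pops) exactly those elements currently on the stack whose value is smaller than its own, and these sit as a contiguous block at the top. From this one checks that a fixed element $a$, once pushed, remains on the stack until the first later element whose value exceeds that of $a$ arrives; call this element the \emph{popper} of $a$. If no such element exists, $a$ is removed only in the final flush. This structural observation is the crux, and I expect its careful verification to be the main obstacle: one must confirm that $a$ cannot be removed earlier (smaller later elements are pushed on top of it) and that the popper is genuinely the first larger-valued successor.

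Granting this, the output order obeys two rules. Elements are emitted batch by batch, a batch being everything popped by a common popper (the flush being the final batch), so an earlier popper means earlier output; and within a single batch elements leave smallest-value-first, since the stack is increasing from top to bottom. Hence $a \prec_{S(P)} b$ holds precisely when the popper of $a$ occurs strictly before the popper of $b$ in position, or when $a$ and $b$ share the same popper (or are both flushed) and $a$ is smaller than $b$ in value, or when $a$ has a popper while $b$ is flushed. I would double-check this trichotomy against the recursive description $\sortS(L\,\max\,R)=\sortS(L)\,\sortS(R)\,\max$ and on small cases such as $\sortS(231)=213$ to guard against ordering slips.

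It then remains to render these conditions in \TOTO. The relation ``$v$ is the popper of $u$'' is first-order definable by $(u <_P v) \et (u <_V v) \et \neg\exists z\,(u <_P z <_P v \et u <_V z)$, and ``$u$ is flushed'' by $\neg\exists v\,(u <_P v \et u <_V v)$. Since the popper, when it exists, is unique, ``$x$ and $y$ share a popper'' is expressible with a single existential quantifier, and ``the popper of $x$ precedes that of $y$'' with two. Assembling the trichotomy above as a disjunction of these first-order conditions (with the remaining case, $x$ flushed while $y$ has a popper, correctly contributing no disjunct) yields the desired $\phi_{SP}(x,y)$, completing the proof of \cref{prop:stackTOTOdef} and hence of \cref{cor:stacksortable}.
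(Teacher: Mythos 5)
Your proposal is correct and follows the paper's own route: the corollary is obtained exactly as in the paper by combining \cref{prop:stackTOTOdef} with \cref{prop:S-sortable_definable} (using that $\I$ is \TOTO-definable), and the real content is a first-order description of the new positional order $\prec_{\sortS(P)}$ derived from the mechanics of a single stack pass. Your ``popper''/batch bookkeeping is a mildly different (and arguably more conceptual) organization of that analysis than the paper's direct three-case comparison of $a$ and $b$, but it yields an equivalent formula and is not a genuinely different argument.
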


\begin{proof}[Proof of \cref{prop:stackTOTOdef}]
To see that $\sortS$ is \TOTO-definable we should consider, for $a$ and $b$ elements of some permutation $\sigma = (X, \prec_P, \prec_V)$, when $a \prec_{\sortS(P)} b$. 
\begin{itemize}
 \item If $a \prec_P b$ and $a \prec_V b$ then certainly $a \prec_{\sortS(P)} b$. Indeed, at the time $b$ is considered for pushing onto the stack, either $a$ will already have been popped, 
or $a$ will have to be popped to allow $b$ to be pushed. 
 \item If $a \prec_P b$ and $b \prec_V a$ then $a \prec_{\sortS(P)} b$ if and only if $a$ must be popped from the stack before $b$ is considered i.e., if and only if there is some $c$ with $a \prec_P c \prec_P b$ and $a \prec_V c$ (that is, $acb$ which occur in that positional order in $\sigma$, form the pattern $231$). 
 \item If $b \prec_P a$ then $a \prec_{\sortS(P)} b$ happens exactly when the two following conditions hold: $a \prec_V b$ and, for no $c$ positionally between $b$ and $a$ is it the case that $b \prec_V c$. 
\end{itemize}

Just from the language used in the descriptions above it should be clear that $\prec_{\sortS(P)}$ can be expressed by a \TOTO formula. For the record, its definition can be taken to be:
\[
x <_{\sortS(P)} y \, \Leftrightarrow \, 
\begin{array}{l} 
\ \left( x <_P y \et \left( x <_V y \ou \exists z \left( \, x <_P z <_P y \et x <_V z \right) \right) \right)  \\
\vee \left( y <_P x \et x <_V y \et \neg \exists z \left( \, y <_P z <_P x \et y <_V z \right) \right). \qedhere
\end{array} 
\]
\end{proof}

It is important to note that the proofs of Propositions~\ref{prop:S-sortable_definable} and~\ref{prop:stackTOTOdef} are constructive. 
Therefore, for any $k$, a \TOTO sentence whose models are exactly the $\sortS$-$k$-sortable permutations can be easily (and automatically) derived from those proofs. 
The formulas so obtained can be compared to the characterization of $\sortS$-$k$-sortable permutations 
by means of pattern-avoidance known in the literature \cite{knuth1968art,Ulfarsson:3stacks,west1993sorting}.
We explain below how to recover Knuth's result that $\sortS$-sortable permutations are those avoiding $231$. 
With a little more effort, West's characterization of $\sortS$-$2$-sortable permutations can be recovered in the same way. 
However, we were not able to ``read'' the characterization of $\sortS$-$3$-sortable permutations given by Ulfarsson on the obtained formula. 

By definition, \sortS fails to sort a permutation $\sigma = (X, \prec_P, \prec_V)$ if and only if there are $a,b \in X$ with $b \prec_{\sortS(P)} a$ but $a \prec_V b$. 
From the formula describing $x <_{\sortS(P)} y$ given at the end of the proof of \cref{prop:stackTOTOdef}, 
this is equivalent to the existence of $a$ and $b$ such that $a \prec_V b$, $b \prec_P a$, and there exists a $c$ with  $b \prec_P c \prec_P a$ and $b \prec_V c$. 
This says exactly that a permutation is stack-sortable if and only if it avoids the pattern $231$. So, we have recovered in a rather round-about way Knuth's original characterization of stack-sortability. 

%V: I don't really understand the purpose of the next paragraph...
%Having in mind the automatization of computating charaterizations of $\sortS$-$k$-sortable permutations, we note that, 
%despite the use of natural language arguments in finding the $\Av(231)$ result above, this characterisation follows automatically from the definition of $<_{\sortS(P)}$ and the rules of logical inference, 
%\emph{i.e.}, it could be carried out automatically from that definition.

\subsubsection{Other sorting operators}

We consider briefly two more sorting operators: bubble sort (\sortB), and sorting with a queue and bypass (\sortQ). 
We show that both are \TOTO-definable, hence from \cref{prop:S-sortable_definable}, that for any $k$, \sortB-$k$- and \sortQ-$k$-sortable permutations are described by a \TOTO sentence. 

\medskip

The bubble sort operator can be thought of as sorting with a one element buffer. When an element $a$ of the permutation is processed we:
\begin{itemize}
\item
place $a$ in the buffer if the buffer is empty (this only occurs for the first element of the permutation),
\item
output $a$ directly if the buffer is occupied by a value larger than $a$,
\item
output the element in the buffer and place $a$ in the buffer if the element in the buffer is smaller than $a$.
\end{itemize}
When the whole permutation has been processed we output the element remaining in the buffer. The result of sorting $\sigma$ with this mechanism is denoted $\sortB(\sigma)$. It is easy to see that:
\[
x <_{\sortB(P)} y \, \Leftrightarrow \, 
\begin{array}{l} 
\quad \left( x <_P y \et  \exists z \, (z \leq_P y \et x <_V z ) \right) \\
\ou \left( y <_P x \et \neg \exists z (z \leq_P x \et y <_V z) \right)
\end{array}
\]
The first clause captures the situation where $x$ precedes $y$ and is either output immediately (because the buffer is occupied by some larger element $z$ when $x$ is processed), 
or caused to be output by some such $z$ (possibly equal to $y$) up to the point when $y$ is processed. The second clause captures the only way that two elements can exchange positional order. 
The first must be able to enter the buffer (so there can be no larger preceding element) and must remain there up to and including the point at which the second arrives. 
As in the stack sorting case it is possible to use this definition to characterize the \sortB-sortable permutations as those that avoid the patterns 231 and 321.

\medskip

In sorting with a queue and bypass, we maintain a queue (First In First Out data structure) whose elements are in increasing order. When an element is processed it:
\begin{itemize}
\item
is added to the queue if it is greater than the last element of the queue,
\item
is output directly if it is less than the first element of the queue,
\item
causes all lesser elements of the queue to be output and is then added to the queue (if it is now empty), or output (if greater elements still remain in the queue).
\end{itemize}
It can easily be seen that being sortable by this algorithm
is equivalent to being sortable by two queues in parallel,
which is one of the models studied by Tarjan in \cite{tarjan1972sorting}.

Note that any element which is preceded (not necessarily immediately) by a greater element is output immediately (possibly after some elements have been removed from the queue), while any element which has no preceding greater element is added to the queue. For convenience let
\begin{align*}
\phi_{12} (x,y) &= x <_P y \et x <_V y, \\
\phi_{21} (x,y) &= x <_P y \et y <_V x ,\\
\phi_R(x) &= \neg \exists z \, \phi_{21}(z,x) , \\
\phi_{\text{out}} (x,y) &= \exists z, w \, (\phi_{21}(z,w) \et x \leq_P w \leq_P y ).
\end{align*}
The first three of these capture natural definitions ($\phi_R$ corresponding to being a left-to-right maximum). 
The fourth expresses the idea that ``$x$ didn't enter the queue or was removed from the queue by the arrival of some element $w$ before (or including) $y$''.
Then it is easy to see that:
\[
x <_{\sortQ(P)} y \, \Leftrightarrow \, 
\begin{array}{l} 
\phi_{12}(x,y) \ou \phi_{\text{out}}(x,y) \ou\\
\left( \phi_R(y) \et \neg \phi_{\text{out}}(y,x) \et \phi_{21}(y,x)  \right) . \\
\end{array}
\]

%\todo[inline]{I (Mathilde) don't like the part $x \leq_V w \leq_P y$ in $\phi_{\text{out}} (x,y)$. It does not make clear what the relation between $x$ an $y$ is. 
%My guess was $x \leq_V y$ and $x \leq_P y$, but then why is $\phi_{\text{out}}(x,y)$ needed in the first line above? 
%I am now guessing that only $x \leq_P y$ is part of $\phi_{\text{out}} (x,y)$, that is to say that $\phi_{\text{out}} (x,y) = \exists z, w \, (\phi_{21}(z,w) \et x \leq_P y \et x \leq_V w \et w \leq_P y ) $. 
%Please check if this is correct, and update the definition (and if needed discussion below) accordingly.}
%V: I think that was just a typo it should have been $x \leq_P w \leq_P y

The first two clauses capture how $a$ precedes $b$ in $\sortQ(\sigma)$ if $a \prec_P b$. Namely, either $ab$ forms a $12$ pattern or $a$ never enters the queue at all, or it does but is caused to leave the queue by a subsequent addition to the queue (up to the arrival of $b$). The final clause covers the situation where $b \prec_P a$ but $a \prec_{\sortQ(P)} b$. For this to occur it must be the case that $b$ enters the queue and is not caused to leave before $a$ arrives (and then $a \prec_V b$ is also needed otherwise $a$ would either enter the queue as well or cause $b$ to be released from the queue).

Despite the apparent complexity of the definition of $\prec_{\sortQ(P)}$,
it is easy to infer that the \sortQ-sortable permutations are precisely those that avoid the pattern 321,
recovering a result in \cite{tarjan1972sorting}.
\medskip

We summarize our findings in the following proposition
\begin{proposition}
  \label{prop:Bubble_queue}
  The sorting operators $\sortS$ (stack sorting), $\sortB$ (bubble sort) and $\sortQ$ (queue and bypass)
  are all $\TOTO$-definable. As a consequence, for any composition $F$ of these sorting operators,
  the set $F^{-1}(\I)$ is definable in $\TOTO$.
\end{proposition}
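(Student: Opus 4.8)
The plan is to assemble the three $\TOTO$-definability claims and then deduce the statement about compositions by iterating Proposition~\ref{prop:S-sortable_definable}. For the first part, the $\TOTO$-definability of $\sortS$ is precisely Proposition~\ref{prop:stackTOTOdef}, already established, and for $\sortB$ and $\sortQ$ the defining formulas for $x <_{\sortB(P)} y$ and $x <_{\sortQ(P)} y$ have been written out explicitly in the preceding discussion. So the only work remaining in this part is verification: for each of the finitely many configurations of a pair $(a,b)$ (namely whether $a \prec_P b$ or $b \prec_P a$, crossed with the relevant value comparisons), I would check that the displayed formula correctly captures whether $a \prec_{\sortB(P)} b$ (resp.\ $a \prec_{\sortQ(P)} b$) according to the operational description of the one-element buffer (resp.\ the queue-with-bypass). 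The informal justifications given after each formula trace through exactly these cases, so formalizing them is routine case analysis; the only care needed is with the boundary cases (e.g.\ whether the blocking or releasing element may coincide with $y$, which is the reason the formulas use $\leq_P$ rather than $<_P$ in several places).

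For the second part I would argue by induction on the length $m$ of a composition $F = S_m \circ \cdots \circ S_1$, where each $S_i \in \{\sortS,\sortB,\sortQ\}$. The base case is that $\I$ is definable in $\TOTO$, which was shown inside the proof of Proposition~\ref{prop:S-sortable_definable}. For the inductive step, I would use $(S' \circ S)^{-1} = S^{-1} \circ (S')^{-1}$ to write $F^{-1}(\I) = S_1^{-1}\big((S_m \circ \cdots \circ S_2)^{-1}(\I)\big)$; setting $\T = (S_m \circ \cdots \circ S_2)^{-1}(\I)$, which is definable in $\TOTO$ by the induction hypothesis, we have $F^{-1}(\I) = S_1^{-1}(\T)$, and this is definable in $\TOTO$ by Proposition~\ref{prop:S-sortable_definable}, since $S_1$ is $\TOTO$-definable by the first part.

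I expect no genuine obstacle: both parts reduce either to facts already established or to finite case-checking. If a cleaner intermediate result is wanted, one could instead observe that the composition $S' \circ S$ of two $\TOTO$-definable operators is itself $\TOTO$-definable: indeed $a \prec_{(S'\circ S)(P)} b$ holds iff $(S(\sigma),a,b) \models \phi_{S'P}$, where in $\phi_{S'P}$ the symbol $<_P$ is interpreted as $\prec_{S(P)}$, and $\prec_{S(P)}$ is in turn captured by $\phi_{SP}$ in terms of the original orders. Substituting $\phi_{SP}(x,y)$ for every occurrence of $x <_P y$ in $\phi_{S'P}$ --- exactly the substitution device used in the proof of Proposition~\ref{prop:S-sortable_definable}, with bound variables of $\phi_{SP}$ renamed to avoid clashes --- yields a defining formula for $S' \circ S$, after which a single application of Proposition~\ref{prop:S-sortable_definable} gives the result. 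The one point to watch in either route is the direction of composition, so that preimages (and substitutions) are applied in the correct order.
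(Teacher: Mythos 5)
Your proposal is correct and matches the paper's (implicit) argument: the paper states this proposition as a summary of the preceding discussion, where the defining formulas for $\sortS$, $\sortB$ and $\sortQ$ are already exhibited, and the composition claim is exactly the iteration of Proposition~\ref{prop:S-sortable_definable} that you describe. Your care about the order of preimages and about variable renaming in the substitution is sound but does not change the route.
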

\begin{remark}
There is a pleasant coincidence here: a permutation is \sortB-sortable if and only if it is both \sortS- and \sortQ-sortable. This is perhaps not unexpected since the only way to operate a queue or a stack without being able to determine which one is being used is to ensure that there is never more than one element stored (i.e., use it as a single element buffer), but we don't pretend to say that this should automatically follow on a logical basis!
\end{remark}

\subsection{A family of expressible cycle-types}\label{sec:expressibilityC_laD_la}

Although \TOTO is designed to express some pattern-related concepts, 
some information on the cycle decomposition of permutations is expressible in \TOTO. 
We start by an easy observation.
\begin{proposition}\label{prop:CLa_BiFO_TOTO}
For any partition $\la$, there exists a \TOTO formula expressing that a permutation is of cycle-type $\la$. 
\end{proposition}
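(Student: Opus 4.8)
The plan is to exploit the fact that fixing the partition $\lambda$ also fixes the size of the permutations under consideration: any permutation of cycle-type $\lambda$ has size exactly $n := |\lambda|$, the sum of the parts of $\lambda$. This reduces the task to a statement about a \emph{bounded} number of elements, which first-order logic can handle by brute enumeration. (This is precisely what fails for properties such as ``has a fixed point'', which range over permutations of every size, and is why that property is \emph{not} expressible in \TOTO.)

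First I would note that ``having size exactly $n$'' is expressible in \TOTO, by a sentence asserting the existence of $n$ pairwise distinct elements together with a clause forcing the domain to contain no other element. I then sharpen this: introducing $n$ existentially quantified variables $x_1, \dots, x_n$, I additionally impose the strict chain $x_1 <_P x_2 <_P \dots <_P x_n$ (which automatically forces them distinct), so that $x_i$ is the element of position-rank $i$. Once the position order of all $n$ elements is pinned down in this way, the permutation is completely determined by the value order among the $x_i$, since the value-rank of $x_i$ then equals $\sigma(i)$.

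The core step is a finite disjunction. For each $\tau \in \mathfrak{S}_n$, the quantifier-free formula
\[
\Phi_\tau(x_1,\dots,x_n) = \bigwedge_{1 \le i < j \le n} \big( x_i <_V x_j \leftrightarrow \tau(i) < \tau(j) \big)
\]
holds, under the constraints above, exactly when the permutation associated with the model equals $\tau$. Since the set $\{\tau \in \mathfrak{S}_n : \ct(\tau) = \lambda\}$ is finite, taking the disjunction of the $\Phi_\tau$ over this set expresses ``the permutation has cycle-type $\lambda$''. Assembling the pieces, I would set
\[
\phi_\lambda = \exists x_1 \cdots \exists x_n \Big[ (x_1 <_P \cdots <_P x_n) \et \forall y \, \bigvee_{i=1}^{n} (y = x_i) \et \bigvee_{\tau \,:\, \ct(\tau) = \lambda} \Phi_\tau(x_1,\dots,x_n) \Big],
\]
a bona fide \TOTO sentence whose models are exactly the permutations of cycle-type $\lambda$.

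The one point to verify carefully, and where the reduction could silently go wrong, is that the ``cover the whole domain'' clause $\forall y \, \bigvee_i (y = x_i)$ is genuinely needed: without it the $\Phi_\tau$ would merely assert \emph{containment} of $\tau$ as a pattern rather than \emph{equality} to $\tau$, and cycle-type is not a pattern-monotone notion. Beyond this I expect no real obstacle — the whole content of the observation is that, once $n$ is fixed, there is nothing to prove about ``following a cycle across unbounded size'', and everything becomes a finite quantifier-free check on the at most $n!$ relevant models. It is worth flagging that this is exactly why $\lambda$ must be fixed: the analogous property ``some cycle has length $m$'', ranging over all sizes, is of a genuinely harder nature and is addressed only later (e.g.\ in \cref{thm:ExpressCycles}).
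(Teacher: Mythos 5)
Your proof is correct and follows essentially the same route as the paper's: a finite disjunction, over all permutations $\tau$ of cycle-type $\la$, of a sentence whose only model is $\tau$. The paper simply cites Proposition~\ref{prop:sameFO_samePerm} for the existence of such a characterizing sentence, whereas you write it out explicitly (position chain, domain-covering clause, value-order check), which is a valid instantiation of the same idea.
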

\begin{proof}
There are a finite number of permutations of cycle-type $\la$. 
So we can simply take the disjunction, over all permutations $\sigma$ of cycle-type $\la$, 
of the \TOTO formula whose only model is $\sigma$ (see Proposition~\ref{prop:sameFO_samePerm}).
\end{proof}

Here is a more surprising result along these lines.
\revision{Recall that $\la \cup (1^k)$
is obtained by adding $k$ parts equal to $1$ to $\la$
(see the end of the introduction).}
\begin{proposition}\label{prop:DLa_BiFO_TOTO}
For any partition $\la$, there exists a \TOTO formula expressing that a permutation is of cycle-type $\la \cup (1^k)$, for some value of $k \geq 0$. 
\end{proposition}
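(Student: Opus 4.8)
The plan is to construct the formula directly by ``guessing'' the support (the set of non-fixed points) and then expressing that everything outside it is a fixed point. Write $\lambda = \lambda' \cup (1^m)$, where $\lambda'$ collects the parts of $\lambda$ that are $\ge 2$ and $m$ is the number of parts equal to $1$. As $k$ ranges over $\{0,1,2,\dots\}$, the cycle-types $\lambda \cup (1^k)$ are exactly the cycle-types $\lambda' \cup (1^j)$ with $j \ge m$, so the target property reads: \emph{the cycles of length $\ge 2$ have sizes exactly $\lambda'$, and there are at least $m$ fixed points}. The first requirement fixes the size of the support: a permutation of cycle-type $\lambda' \cup (1^j)$ has exactly $s := |\lambda'|$ non-fixed points, a number independent of $j$. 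Moreover, deleting a fixed point of a permutation and renormalizing the remaining diagram removes precisely one $1$-cycle and leaves the other cycles intact; iterating, the support of such a permutation, read as a pattern, is a fixed-point-free permutation of cycle-type $\lambda'$.

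First I would record the local observation that lets \TOTO detect fixed points in a controlled way. For a point $b$, let $\mathrm{NW}(b)$ (resp.\ $\mathrm{SE}(b)$) denote the set of points $c$ with $c <_P b$ and $b <_V c$ (resp.\ $b <_P c$ and $c <_V b$). Then $b$ is a fixed point if and only if $|\mathrm{NW}(b)| = |\mathrm{SE}(b)|$: the number of points positionally before $b$ equals $|\mathrm{NW}(b)|+|\mathrm{SW}(b)|$, the number of points value-below $b$ equals $|\mathrm{SW}(b)|+|\mathrm{SE}(b)|$, and $b$ is fixed exactly when these two counts agree.

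Then, letting $T$ be the finite set of fixed-point-free permutations of cycle-type $\lambda'$, I would take
\[
\Phi \ = \ \bigvee_{\tau \in T} \ \exists a_1 \cdots \exists a_s \,
\Big( \psi_\tau(a_1,\dots,a_s) \;\wedge\; \mathrm{Inc}(\vec a) \;\wedge\; \mathrm{Diag}(\vec a) \;\wedge\; \mathrm{Card}_m(\vec a) \Big),
\]
where $\psi_\tau$ is the classical-pattern formula of \cref{prop:classical_pattern}; $\mathrm{Inc}(\vec a)$ says that any two points distinct from all $a_i$ are increasing (no two of them form a $21$); $\mathrm{Diag}(\vec a)$ says that every point $b$ distinct from all $a_i$ has as many of the $a_i$ in $\mathrm{NW}(b)$ as in $\mathrm{SE}(b)$; and $\mathrm{Card}_m(\vec a)$ says there are at least $m$ points distinct from all $a_i$. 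Since $\mathrm{Diag}$ only counts among the fixed number $s$ of designated points, and $\mathrm{Card}_m$ only up to the fixed threshold $m$, both are genuine first-order formulas. For soundness, if $\sigma \models \Phi$ via $\{a_i\}$, then by $\mathrm{Inc}$ the non-designated points are mutually increasing, so none of them lies in $\mathrm{NW}(b)$ or $\mathrm{SE}(b)$ for a non-designated $b$; hence $\mathrm{Diag}$ upgrades to $|\mathrm{NW}(b)| = |\mathrm{SE}(b)|$ in the full permutation, and the local observation forces $b$ to be fixed. Thus $\{a_i\}$ is exactly the support, its pattern is $\tau$, and $\sigma$ has cycle-type $\lambda' \cup (1^j)$ with $j \ge m$, i.e.\ $\lambda \cup (1^{\,j-m})$. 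Completeness is immediate: designating the true support of a genuine permutation of cycle-type $\lambda \cup (1^k)$, whose fixed points are mutually increasing and satisfy $|\mathrm{NW}|=|\mathrm{SE}|$, makes every clause hold.

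The main obstacle is that the counting clause $\mathrm{Diag}$ alone does \emph{not} suffice, and isolating the extra condition $\mathrm{Inc}$ is the crux. For instance, in $\sigma = 2143$ the two points at positions $3,4$ realize the pattern $\tau = 21$ (cycle-type $\lambda'=(2)$) and the two remaining points do satisfy the $\mathrm{Diag}$ condition, yet $\sigma$ has cycle-type $(2,2)$ rather than $(2,1^k)$. What fails is that the two undesignated points themselves form a $21$ and so contribute to the NW/SE counts of the full permutation; forbidding this through $\mathrm{Inc}$ is exactly what allows $\mathrm{Diag}$ to certify genuine fixed points via the local observation. Verifying that $\mathrm{Inc} \wedge \mathrm{Diag}$ is not only necessary but sufficient is therefore the heart of the argument.
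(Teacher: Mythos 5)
Your proof is correct and is essentially the paper's own argument: your clauses $\mathrm{Inc}$ and $\mathrm{Diag}$ are exactly the paper's conditions \ref{item:Outside_Value=Position} and \ref{item:Outside_Inside} (restated via the NW/SE cells), and the verification in both directions proceeds by the same counting. The only cosmetic difference is that you keep only the non-trivial parts $\la'$ in the designated set and handle the $m$ parts equal to $1$ with the clause $\mathrm{Card}_m$, whereas the paper absorbs $m$ arbitrarily chosen fixed points into its set $X$ of size $|\la|$; both choices work equally well.
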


\begin{proof}
As usual, for a partition $\la=(\la_1, \dots, \la_q)$, we denote by $|\la| = \sum_{i=1}^q \la_i$ its size. 
Being of cycle-type $\la \cup (1^k)$ for some value of $k$ can be translated as follows: there are $|\la|$ distinct elements which forms a pattern of cycle-type $\la$ and all other elements are fixed points. 
This is illustrated in \cref{fig:D32}, which shows a schematic representation of 
a permutation with cycle-type $(3,2) \cup (1^k)$.
\begin{figure}[t]
    \pgfmathsetmacro{\xa}{1+.8*rand}
    \pgfmathsetmacro{\xb}{\xa+1+.8*rand}
    \pgfmathsetmacro{\xc}{\xb+1+.8*rand}
    \pgfmathsetmacro{\xd}{\xc+1+.8*rand}
    \pgfmathsetmacro{\xe}{\xd+1+.8*rand}
    \pgfmathsetmacro{\xf}{\xe+1+.8*rand}
    \pgfmathsetmacro{\scalfac}{4/\xf}
    \def\esp{.1}
  \[\begin{tikzpicture}[scale=\scalfac]
    %\FPeval\xa{.1+(rand/(1000))}
    \draw (0,0) -- (0, \xf) -- (\xf, \xf) -- (\xf, 0) -- (0,0);
	\foreach \i in {\xa,\xb,\xc,\xd,\xe}{
		\draw[color=darkgray,dotted] (\i, 0) -- (\i, \xf);
		\draw[color=darkgray,dotted] (0, \i) -- (\xf, \i);
	}
    \absdot{(\xa,\xb)}{};
    \absdot{(\xb,\xd)}{};
    \absdot{(\xc,\xe)}{};
    \absdot{(\xd,\xa)}{};
    \absdot{(\xe,\xc)}{};
    \draw (\esp,\esp) -- (\xa - \esp,\xa - \esp);
    \draw (\xa+\esp,\xa+\esp) -- (\xb - \esp,\xb - \esp);
    \draw (\xb+\esp,\xb+\esp) -- (\xc - \esp,\xc - \esp);
    \draw (\xc+\esp,\xc+\esp) -- (\xd - \esp,\xd - \esp);
    \draw (\xd+\esp,\xd+\esp) -- (\xe - \esp,\xe - \esp);
    \draw (\xe+\esp,\xe+\esp) -- (\xf - \esp,\xf - \esp);
  \end{tikzpicture}\]
  \caption{A schematic representation of a permutation $\si$ with cycle-type
  $(3,2) \cup (1^k)$.
  The vertical and horizontal dotted lines indicate the positions and values
  of elements in the support of $\si$.
  The black points representing elements of the support
  form a pattern of cycle-type $(3,2)$.
  The diagonal lines represent an arbitrary number of points,
  placed in an increasing fashion, which are fixed points.}
  \label{fig:D32}
\end{figure}
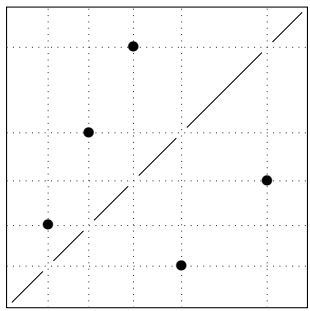

This does not immediately imply that being of cycle-type $\la \cup (1^k)$ is expressible in \TOTO, since \TOTO cannot \emph{a priori} express fixed points. 
However, this allows to translate the property of being of cycle-type $\la \cup (1^k)$ in a language that \TOTO can speak. 

Namely, for a permutation $\si$ and a subset $X=\{x_1,\dots,x_{|\la|}\}$ of size $|\la|$ of 
$A^\si=\{(i,\si(i))\, :\, 1\le i \le n\}$, 
we consider the following properties:
\begin{enumerate}[label=(P\arabic*)]
  \item the pattern $\pi$ induced by $\si$ on the set $X$ 
    has cycle-type $\la$;
    \label{item:Pattern_Type_la}
  \item the value and position orders coincide outside $X$;
    \label{item:Outside_Value=Position}
  \item for each element $y$ outside $X$ there are as many elements of $X$ below $y$ in the value order as in the position order.
    \label{item:Outside_Inside}
\end{enumerate}
We claim that for a permutation $\si$,
the existence of a set $X$ of size $|\la|$ with
properties \ref{item:Pattern_Type_la}, \ref{item:Outside_Value=Position}
and \ref{item:Outside_Inside} is equivalent to being of cycle-type $\la \cup (1^k)$. 
This will be proved later. 
For the moment, we focus on explaining why the existence of such a set $X$ with the above
three properties is indeed expressible in \TOTO. 
\medskip

Recall that \TOTO is a first-order theory,
so that we can only quantify on variables and not on sets.
However, since $X$ has a fixed size $|\lambda|$, quantifying on $X$
or on variables $x_1,\dots,x_{|\lambda|}$ is the same.
It is therefore sufficient to check that each of the properties
\ref{item:Pattern_Type_la}, \ref{item:Outside_Value=Position} and \ref{item:Outside_Inside}
are expressible in \TOTO by some \TOTO formula in the free variables $x_1,\dots,x_{|\la|}$.

For the first property,
we take as before the disjunction over all patterns $\pi$ with cycle-type $\la$. 
The second property writes naturally in terms of value and position orders
and is therefore trivially expressible in \TOTO.
For the third one, we have to be a bit more careful since sentences of the kind ``there are as many elements \dots as''
are in general not expressible in first-order logic.
But we are counting elements of $X$ with given properties and $X$ has a fixed size ($|X|=|\la|$).
So, we can rewrite \ref{item:Outside_Inside} as a finite disjunction over $j \le |\la|$ 
and over all pairs of subsets $X_P$ and $X_V$ of $X$ of size $j$ of properties
``the elements of $X$ below $y$ in the position order are exactly those of $X_P$ and 
the elements of $X$ below $y$ in the value order are exactly those of $X_V$''. 
% of properties
% ``there are exactly $k$ elements of $X$ below $y$ in the value order and exactly $k$ elements of $X$ below $y$ in the position order''.
This shows that the existence of a set $X$ of size $|\la|$ such that properties \ref{item:Pattern_Type_la}, \ref{item:Outside_Value=Position}     
and \ref{item:Outside_Inside} hold is expressible in \TOTO. 

\medskip

We now prove our claim that, for a permutation $\si$,
the existence of a set $X$ of size $|\la|$ with
properties \ref{item:Pattern_Type_la}, \ref{item:Outside_Value=Position}
and \ref{item:Outside_Inside} is equivalent to being of cycle-type $\la \cup (1^k)$, for any value of $k \geq 0$. 

\medskip

First take a permutation $\si$ of cycle-type $\la \cup (1^k)$.
We consider its support (i.e. the set of non fixed points), denoted $I_{s}$. 
And if $\la$ has $m_1$ parts equal to $1$, we choose arbitrarily a set $I_f$ of $m_1$ fixed points of $\si$.
This gives a set $I = I_{s} \uplus I_f$ such that $\si(I)=I$ and such that the restriction of $\si$ to $I$
has cycle-type $\la$. We set $X=\{(i,\si(i)),\, i \in I\}$.
In particular, $\si$ induces a pattern $\pi$ of cycle-type $\la$ on $X$,
so that \ref{item:Pattern_Type_la} is satisfied.
Property \ref{item:Outside_Value=Position} is also satisfied since all points
outside $X$ are fixed points for the permutation $\si$.
Consider property \ref{item:Outside_Inside}.
We take $x=(i,\si(i))$ in $X$ and $y$ outside $X$.
Since $y$ is a fixed point i.e. $y=(j,j)$ for some $j$, 
the relation $x <_V y$ is equivalent to $\si(i) < j$.
Besides, by definition, $x <_P y$ means $i<j$.
But $\si(I)=I$ so that there are as many $i$ in $I$ such that $i < j$ as $i$ in $I$ such that $\si(i) < j$.
This proves that $X$ also satisfies property \ref{item:Outside_Inside}.
\medskip

Conversely, we consider a permutation $\si$ and we assume that there is a subset $X$
of elements of $\si$ satisfying \ref{item:Pattern_Type_la}, \ref{item:Outside_Value=Position}
and \ref{item:Outside_Inside}. We want to prove that $\si$ has cycle-type $\la \cup (1^k)$. 

Let $I$ be such that $X=\{(i,\si(i)),\, i \in I\}$.
We first prove that $\si(I)=I$.
Consider the smallest element $x_1=(i_1,\si(i_1))$ of $X$ in the position order
and its smallest element $y_1=(j_1,\si(j_1))$ in the value order.
By property \ref{item:Outside_Inside}, an element $y$ outside $X$
is below $x_1$ in the position order if and only if it is below $y_1$ in the value order.
Therefore the rank of $x_1$ in the position order (which is $i_1$)
is the same as the rank of $y_1$ in the value order (which is $\si(j_1)$),
so that $\si(j_1)=i_1$.
Similarly, if $x_k=(i_k,\si(i_k))$ (resp. $y_k=(j_k,\si(j_k))$) is the $k$-th smallest element of $X$ in the position (resp. value) order,
then $\si(j_k)=i_k$, which proves $\si(I)=I$.

We now claim that elements $y$ outside $X$ are fixed points for $\si$.
To do that, we should prove that there are as many elements $z$ below $y$ in the position order as in the value order.
Property \ref{item:Outside_Inside} says that this holds when restricting to elements $z$ of $X$.
But property \ref{item:Outside_Value=Position} tells us that an element $z$ in the complement $X^c$ of $X$ is below $y$ in the position order
if and only if it is below $y$ in the value order, so that there is the same number of elements $z$ of $X^c$ below $y$ in both orders.
We can thus conclude that $y$ is a fixed point for $\si$.

By \ref{item:Pattern_Type_la}, the pattern induced by $\si$ on $I$ has cycle-type $\la$.
Moreover, since $\si(I)=I$, the restriction of $\si$ to $I$ is a union of cycles of $\si$, 
so that $\si$ has cycle-type $\lambda \cup \mu$ for some $\mu$. 
On the other hand, we have proved that elements outside $I$ are fixed points for $\si$.
Therefore $\mu = (1^{|X^c|})$ and $\si$ has cycle-type $\la \cup (1^k)$, as wanted.
\end{proof}

\section{Proving inexpressibility results: Ehrenfeucht-Fra\"{i}ss\'{e} games on permutations}
\label{sec:EFGames}

As we have seen above with many examples, proving that a property of permutations is expressible in \TOTO is easy, at least in principle: 
it is enough to provide a \TOTO formula expressing it. 
But how to prove that a property is \emph{not} expressible in \TOTO? 
To this end, we present a technique -- that of Ehrenfeucht-Fra\"{i}ss\'{e} games -- to show inexpressibility results in \TOTO. 
This method is very classical, although its application in the context of permutations seems to be new.

\subsection{The theory of Ehrenfeucht-Fra\"{i}ss\'{e} games}
Ehrenfeucht-Fra\"{i}ss\'{e}, or Duplicator-Spoiler games are a fundamental tool in proving definability and non-definability results. 
We give a brief introduction to them in the context of the permutations as models of \TOTO below, 
but also refer the reader to \cite{Gradel2007Finite-model-th,Hodges1997A-shorter-model,Immerman1999Descriptive-com,Rosenstein1982Linear-ordering} for various presentations with differing emphases.

Let $\alpha$ and $\beta$ be two permutations (or more generally any two models of the same theory), and let $k$ be a positive integer. The \emph{Ehrenfeucht-Fra\"{i}ss\'{e} (EF) game of length $k$} played on $\alpha$ and $\beta$ is a game between two players (named \emph{Duplicator} and \emph{Spoiler}) according to the following rules:
\begin{itemize}
\item
The players alternate turns, and Spoiler moves first.
\item
The game ends when each player has had $k$ turns.
\item
At his $i\th$ turn, ($1 \leq i \leq k$) Spoiler chooses either an element $a_i \in \alpha$ or an element $b_i \in \beta$. In response, at her $i\th$ turn, Duplicator chooses an element of the other permutation.
Namely, if Spoiler has chosen $a_i \in \alpha$, then Duplicator chooses an element $b_i \in \beta$, 
and if Spoiler has chosen $b_i \in \beta$, then Duplicator chooses $a_i \in \alpha$.
\item
At the end of the game if the map $a_i \mapsto b_i$ for all $i \le k$
preserves both position and value orders, then Duplicator wins
(more generally, she wins if the map $a_i \mapsto b_i$ defines an isomorphism between
the submodels of $\alpha$ and $\beta$ consisting of $\{a_i, 1 \le i \le k\}$ and $\{b_i, 1 \le i \le k\}$,
respectively). Otherwise, Spoiler wins.
\end{itemize}

We assume that the players play in the best possible way, i.e.
we say that Duplicator {\em wins} if she has a winning strategy.
Put briefly for the case of permutations, Duplicator wins if she can at every turn, choose a point which corresponds to the point just chosen by Spoiler in the other permutation, 
in the sense that it compares in the same way to all previously chosen points,
both for the value and the position orders. If Duplicator wins the $k$-move EF game on $\alpha$ and $\beta$ we write $\alpha \EFeq_k \beta$. It is easy to check that $\EFeq_k$ is an equivalence relation for each $k$.

Based on the recursive definitions of formulas we can define the \emph{quantifier depth}, $\qdepth(\psi)$, of a formula $\psi$ in an obvious way. If $\psi$ is an atomic formula then $\qdepth(\psi) = 0$. Otherwise:
\begin{align*}
\qdepth(\neg \psi) &= \qdepth (\psi), \\
\qdepth(\psi \vee \theta) = \qdepth(\psi \wedge \theta) &= \max (\qdepth(\psi), \qdepth(\theta)), \\
\qdepth(\exists x \, \psi) = \qdepth(\forall x \, \psi) &= \qdepth(\psi) + 1. %\quad \mbox{(where $x$ is free in $\psi$).}
\end{align*}
% \todo[inline]{MB: Do we care about $x$ is free in $\psi$?}

The connection between EF games and quantifier depth is captured in the fundamental theorem of Ehrenfeucht and Fra\"{i}ss\'{e}: 
\begin{quote}
$\alpha \EFeq_k \beta$ if and only if $\alpha$ and $\beta$ satisfy the same set of sentences of quantifier depth $k$ or less.
\end{quote}
One way to use this result is to establish when two models satisfy the same set of sentences (of all quantifier depths). 
However, this is not so interesting in the finite context where this is equivalent to isomorphism. 
Instead, we are interested in its application in proving \emph{inexpressibility} results. 
Consider some property, $P$, of permutations (or of models over some other signature). 
We would like to know if $P$ is \emph{expressible in \TOTO}, 
that is whether or not there is some \TOTO sentence $\psi$ such that $\alpha$ is a model of $\psi$ if and only if $\alpha$ has property $P$. 
To show that this is \emph{impossible} it suffices to demonstrate, for each positive integer $k$, that 
there are permutations (resp.~models) $\alpha$ and $\beta$ such that Duplicator wins the $k$-move EF game on $\alpha$ and $\beta$, 
and $\alpha$ satisfies $P$ but $\beta$ does not. 

We begin with a simple but illustrative example which uses a well-known result about EF games for finite linear orders (see, e.g., Theorem 2.3.20 of \cite{Gradel2007Finite-model-th}). We include the proofs as they capture the flavour of what is to follow.

\begin{proposition}
  \label{prop:EF_linear_orders}
Let a positive integer $k$ be given. If $X$ and $Y$ are finite linear orders and $|X|, |Y| \geq 2^k - 1$, then Duplicator wins the $k$-move EF game played on $(X,Y)$.
\end{proposition}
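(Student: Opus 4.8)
The plan is to exhibit an explicit winning strategy for Duplicator and show it survives all $k$ rounds, arguing by induction on the number of moves still to be played. First I would reformulate the winning condition in terms of \emph{gaps}. At any stage, suppose the points chosen so far in $X$, listed in increasing order, are $p_1 < \dots < p_r$, with corresponding points $q_1 < \dots < q_r$ in $Y$ (Duplicator will always maintain that this correspondence is order-preserving; if Spoiler ever repeats a previously chosen point, Duplicator repeats the matching one, so I may assume each move lands strictly inside an interval). These points cut $X$ into $r+1$ open intervals (including the two unbounded end ones), and likewise for $Y$. I write $s_i^X$ and $s_i^Y$ for the number of \emph{unchosen} elements in the $i$-th interval of $X$ and of $Y$.

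The invariant Duplicator maintains, with $j$ moves still remaining, is: for every $i$, either $s_i^X = s_i^Y$, or both $s_i^X \geq 2^j - 1$ and $s_i^Y \geq 2^j - 1$. Initially $r = 0$, there is a single interval (the whole order), $j = k$, and the hypothesis $|X|, |Y| \geq 2^k - 1$ is exactly this invariant. Since an order-preserving bijection between finite linear orders is automatically an isomorphism of the chosen submodels, maintaining order-preservation for $k$ rounds guarantees that Duplicator wins, so it suffices to show the invariant can be kept.

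For the inductive step, suppose the invariant holds with $j$ moves remaining and Spoiler plays a point $a$ in $X$ (the move-in-$Y$ case is symmetric). This $a$ lies in an interval of size $p := s_i^X$ and splits it into two parts of sizes $p_1, p_2$ with $p_1 + p_2 = p - 1$; let $q := s_i^Y$ be the size of the matching interval of $Y$. Duplicator places $b$ in that interval of $Y$, splitting $q$ into $q_1 + q_2 = q - 1$, aiming to restore the invariant with $j-1$ moves left for both new intervals. If $p = q$, Duplicator copies the offset, so $q_1 = p_1$ and $q_2 = p_2$ and equality is preserved. If instead $p, q \geq 2^j - 1$, she proceeds by cases: when $p_1 \leq 2^{j-1} - 2$ she sets $q_1 = p_1$, and a short computation using $2^j - 1 = 2(2^{j-1}-1)+1$ gives $p_2, q_2 \geq 2^{j-1}$, so both new intervals satisfy the invariant; the case $p_2 \leq 2^{j-1} - 2$ is symmetric; and if both $p_1, p_2 \geq 2^{j-1}-1$ then, since $q - 1 \geq 2(2^{j-1}-1)$, she can split $q$ into two parts each of size $\geq 2^{j-1}-1$.

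I expect the main obstacle to be organizing this case analysis cleanly and checking the threshold arithmetic, in particular verifying that whenever one side of a split is forced below $2^{j-1}-1$ the large parent interval keeps the other side above the halved threshold. The single fact that makes the induction close is $2^j - 1 = 2(2^{j-1}-1) + 1$: a large interval always splits into two intervals that are each large enough for one fewer round, which is precisely what lets the bound $2^k - 1$ propagate down to the base case $j = 1$ (where both intervals are merely nonempty and any response works).
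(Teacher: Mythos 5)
Your proof is correct and follows essentially the same route as the paper's: you maintain exactly the invariant the paper states (each pair of corresponding intervals is either of equal length or both of length at least $2^j-1$ with $j$ moves remaining), the paper merely asserting that this is ``easily proved by iteration'' where you carry out the case analysis explicitly. The threshold arithmetic in your three cases checks out, so this is a valid filling-in of the same argument.
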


\begin{proof}
After the $r$-th move, $r$ elements have been chosen in $X$ and this defines $r+1$ intervals $I_0,\dots,I_r$,
where $I_i$ \revision{consists of the elements of $X$ 
that are greater than exactly $i$ out of the $r$ elements chosen so far.}
Similarly, elements chosen in $Y$ define $r+1$ intervals $J_0$,\dots,$J_r$.

The basic idea of the proof is simple. 
For a fixed integer parameter $a$ call a subinterval of a finite linear order consisting of at least $2^a-1$ elements \emph{$a$-long}.
The following claim is easily proved by iteration on $r$ (from $0$ to $k$): 
regardless of Spoilers' moves, Duplicator can always arrange that after $r$ moves,
for any $s$ in $\{0,\dots,r\}$ either $I_s$ and $J_s$ are both $(k-r)$-long, or have exactly the same length. 

Applying this strategy on the $k$-move EF game played on $(X,Y)$,
the suborders of $X$ and $Y$ induced by the chosen elements are clearly isomorphic, 
proving our proposition. 
\end{proof}

\begin{corollary}
There is no first-order sentence to express that a finite linear order contains an even number of elements. 
\end{corollary}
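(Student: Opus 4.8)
The plan is to use the Ehrenfeucht-Fra\"{i}ss\'{e} method exactly as outlined in the passage above: to show that a property $P$ of finite linear orders -- here, containing an even number of elements -- is not expressible in first-order logic, it suffices to exhibit, for each positive integer $k$, two linear orders that are $\EFeq_k$-equivalent but disagree on $P$. Proposition~\ref{prop:EF_linear_orders} will do all the real work, so the argument is essentially an immediate application.

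First I would argue by contradiction. Suppose there were a first-order sentence $\psi$ over the signature of a single linear order such that a finite linear order $X$ satisfies $\psi$ if and only if $|X|$ is even. Let $k = \qdepth(\psi)$ be its quantifier depth.

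Next I would produce a witnessing pair. Take $X$ to be the linear order on $2^k - 1$ elements and $Y$ the linear order on $2^k$ elements. Since both sizes are at least $2^k - 1$, Proposition~\ref{prop:EF_linear_orders} guarantees that Duplicator wins the $k$-move EF game on $(X,Y)$, that is $X \EFeq_k Y$. By the fundamental theorem of Ehrenfeucht and Fra\"{i}ss\'{e}, $X$ and $Y$ therefore satisfy the same sentences of quantifier depth at most $k$; in particular $X \models \psi$ if and only if $Y \models \psi$.

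Finally I would extract the contradiction from a parity observation. Since $2^k - 1$ and $2^k$ are consecutive integers, exactly one of them is even: here $|Y| = 2^k$ is even while $|X| = 2^k - 1$ is odd. Hence $Y \models \psi$ but $X \not\models \psi$, contradicting the equivalence just established. There is no genuine obstacle to overcome in this proof; the only point meriting a moment's care is the choice of two sizes that both clear the threshold $2^k - 1$ while having opposite parities, which is why picking the consecutive pair $2^k - 1$ and $2^k$ is convenient.
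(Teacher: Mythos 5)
Your proof is correct and is essentially identical to the paper's own argument: both proceed by contradiction, take the two linear orders of sizes $2^k-1$ and $2^k$, and invoke Proposition~\ref{prop:EF_linear_orders} together with the Ehrenfeucht--Fra\"{i}ss\'{e} theorem to contradict the parity difference.
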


\begin{proof}
The proof is by contradiction. 
Assume such a sentence $\phi$ would exist. It would have a specific quantifier depth, $k$. 
Consider $X$ and $Y$ two linear orders with $|X|=2^k$ and $|Y|=2^k-1$. Of course, $X \models \phi$ while $Y \not \models \phi$.
But from Proposition~\ref{prop:EF_linear_orders}, we have $X \EFeq_k Y$, which brings a contradiction to 
the fundamental theorem of Ehrenfeucht and Fra\"{i}ss\'{e}. 
\end{proof}

\subsection{Some consequences for permutations}

From now on, for any integer $n$, let us denote by $\inc_n = 12 \dots n$ (resp. $\delta_n =n \dots 21$) the increasing (resp. decreasing) permutation of size $n$. 

\begin{proposition}
  \label{Prop:monotone_equivalent} 
  Let $n$, $m$ and $k$ be positive integers with $n,m \ge 2^k-1$.
  Then we have $\inc_m \sim_k \inc_n$ and $\delta_m \sim_k \delta_n$.
\end{proposition}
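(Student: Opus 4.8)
The plan is to reduce the Ehrenfeucht--Fra\"iss\'e game on monotone permutations to the corresponding game on a single finite linear order, and then invoke Proposition~\ref{prop:EF_linear_orders}. The starting observation is that monotone permutations are, as models of \TOTO, essentially linear orders in disguise. Indeed, in the increasing permutation $\inc_n$ the two orders coincide: for all elements $a,b$ we have $a <_P b$ if and only if $a <_V b$. Dually, in the decreasing permutation $\delta_n$ we have $a <_P b$ if and only if $b <_V a$, so that $<_V$ is precisely the reverse of $<_P$. In both cases the model is determined by a single linear order on $n$ points.

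The key step is to record what Duplicator must achieve. At the end of the game, Duplicator wins if the map $a_i \mapsto b_i$ preserves both $<_P$ and $<_V$. For $\inc_n$, since $<_V$ equals $<_P$, preserving $<_P$ automatically preserves $<_V$; for $\delta_n$, since $<_V$ is the reverse of $<_P$, a map preserves $<_P$ if and only if it preserves its reverse $<_V$. Hence in either family the winning condition for Duplicator collapses to the single requirement that the map be order-preserving for $<_P$. Consequently, the $k$-move game played on $(\inc_m,\inc_n)$ (respectively on $(\delta_m,\delta_n)$) has exactly the same structure --- the same set of legal moves and the same winning condition --- as the $k$-move game played on the underlying linear orders of sizes $m$ and $n$.

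It then suffices to transport a winning strategy. Since $m,n \ge 2^k-1$, Proposition~\ref{prop:EF_linear_orders} furnishes Duplicator with a winning strategy in the $k$-move game on two linear orders of sizes $m$ and $n$. Reading each of Spoiler's moves (in whichever of the two structures he plays) as a move in the corresponding linear order, and reading Duplicator's responses back into the permutation model, yields a winning strategy for the permutation game. This gives $\inc_m \sim_k \inc_n$ and, by the identical argument through the reverse identification, $\delta_m \sim_k \delta_n$.

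I do not anticipate a genuine obstacle here; the only point demanding care is the verification that, for these two highly symmetric families, preserving $<_P$ is \emph{equivalent} to preserving both orders simultaneously --- straightforward for the increasing case and requiring the elementary remark that an isomorphism of linear orders is also an isomorphism of their reverses in the decreasing case. Once this equivalence is in hand, the reduction to Proposition~\ref{prop:EF_linear_orders} is immediate.
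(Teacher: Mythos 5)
Your argument is correct and is essentially identical to the paper's own proof: both reduce the game on monotone permutations to the game on linear orders by noting that $<_V$ either coincides with $<_P$ (increasing case) or is its reverse (decreasing case), and then invoke Proposition~\ref{prop:EF_linear_orders}. Your write-up merely makes explicit the verification that preserving $<_P$ suffices for the winning condition, which the paper leaves implicit.
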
 

\begin{proof}
  In an increasing permutation,
  position and value order coincide.
  Therefore the EF game on $\inc_m$ and $\inc_n$ can be played as 
  if it was played on linear orders of $m$ and $n$ elements respectively.
  We conclude by \cref{prop:EF_linear_orders}
  that Duplicator wins the $k$-move Duplicator-Spoiler game.

  The same is true for decreasing permutations $\delta_m$ and $\delta_n$
  since in this case, the value order is just the opposite of the position order.
\end{proof}

\begin{corollary}
  \label{Cor:Fixed_Points_Not_FO}
The property of having a fixed point is not expressible by a sentence in \TOTO. 
In other words, there does not exist a sentence $\psi$ in \TOTO such that $\sigma \models \psi$ if and only if $\sigma$ has a fixed point.
\end{corollary}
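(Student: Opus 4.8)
The plan is to argue by contradiction, using the fundamental theorem of Ehrenfeucht and Fra\"{i}ss\'{e} together with \cref{Prop:monotone_equivalent}. Suppose that some \TOTO sentence $\psi$ has exactly the permutations with a fixed point as its models, and let $k = \qdepth(\psi)$ be its quantifier depth. The goal is to exhibit two permutations that are indistinguishable in the $k$-move EF game, yet disagree on the existence of a fixed point; this contradicts the fundamental theorem, since it would force $\psi$ to take the same truth value on both.

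The key observation is that increasing permutations are useless here, because every point of $\inc_n$ is a fixed point, so \emph{all} increasing permutations have one. Decreasing permutations, on the other hand, are perfectly suited: since $\delta_n(i) = n+1-i$, the permutation $\delta_n$ has a fixed point (its middle element) precisely when $n$ is odd. I would therefore choose an odd integer $n$ and an even integer $m$, both at least $2^k - 1$ (such integers obviously exist). Then $\delta_n$ has a fixed point while $\delta_m$ has none, so $\delta_n \models \psi$ and $\delta_m \not\models \psi$.

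Now \cref{Prop:monotone_equivalent} gives $\delta_n \EFeq_k \delta_m$, so by the fundamental theorem of Ehrenfeucht and Fra\"{i}ss\'{e} the two permutations satisfy exactly the same sentences of quantifier depth at most $k$. Since $\qdepth(\psi) = k$, they must agree on $\psi$, contradicting the previous paragraph. Hence no such $\psi$ exists.

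I do not expect a serious obstacle; the only point requiring care is the choice of witnessing permutations. One must resist the natural temptation to compare an increasing with a decreasing permutation, which would cleanly separate fixed-point behaviour but fails the required $\EFeq_k$ equivalence (position and value orders coincide in one and are reversed in the other, so Duplicator already loses after two moves). Staying entirely within the family of decreasing permutations, and exploiting the parity of their size, is exactly what keeps \cref{Prop:monotone_equivalent} applicable and closes the argument.
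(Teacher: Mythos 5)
Your proof is correct and follows essentially the same route as the paper: both arguments pick two decreasing permutations of sizes at least $2^k-1$ with opposite parity (the paper uses $\delta_{2^k-1}$ and $\delta_{2^k}$ specifically), invoke \cref{Prop:monotone_equivalent} to get $\EFeq_k$-equivalence, and derive the contradiction from the fundamental theorem of Ehrenfeucht and Fra\"{i}ss\'{e}.
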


\begin{proof}
Consider the monotone decreasing permutation $\delta_n$ on $n$ elements. If $n$ is odd, then $\delta_n$ has a fixed point, while if it is even, it does not. 
Suppose that a sentence $\psi$ of $\TOTO$ defined ``having a fixed point'' and had quantifier depth $k$.
Then, from \cref{Prop:monotone_equivalent}, either both $\delta_{2^k-1}$ and $\delta_{2^k}$ would satisfy $\psi$ or neither would.
However, one has a fixed point and the other does not, which contradicts the supposed property of $\psi$.
\end{proof}

% Note also that the above \cref{Cor:Fixed_Points_Not_FO} has the following immediate consequence. 
% 
% \begin{corollary}
% There does not exists a formul $\phi(x)$ with one free variable in \TOTO which expresses the property that a given element of a permutation is a fixed point. 
% \end{corollary}
% 
% \begin{proof}
% Indeed, if such a formula were to exist, quantifying existencially over its free variable would provide a sentence 
% expressing the existence of a fixed point in \TOTO, therefore contradicting \cref{Cor:Fixed_Points_Not_FO}.
% \end{proof}
%  
% 
% 
% \medskip

We will use the underlying idea of the previous argument in a number of different contexts in what follows. 
Specifically, we will generally construct permutations $\alpha$ and $\beta$ which are similar enough that Duplicator wins the EF game of a certain length, 
but are different enough that one has the property under consideration while the other does not. 
Frequently this `similarity' will involve some embedded monotone subsequences, 
and we will use notation as inflations, sums and skew-sums to describe such situations in a uniform way.

The following proposition is the key ingredient in most of the arguments in the next section.

\begin{proposition}
\label{Prop:MagicLemma}
Let $\alpha \in \S_n$ and for $1 \leq i \leq n$ suppose that $\sigma_i \EFeq_k \tau_i$. Then $\alpha[\sigma_1, \sigma_2, \dots, \sigma_n] \EFeq_k \alpha[\tau_1, \tau_2, \dots, \tau_n]$.
\end{proposition}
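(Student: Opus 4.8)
The plan is to exhibit an explicit winning strategy for Duplicator in the $k$-move EF game played on the two inflations $A := \alpha[\sigma_1, \dots, \sigma_n]$ and $B := \alpha[\tau_1, \dots, \tau_n]$, obtained by running the $n$ given strategies ``in parallel'', one inside each block. Recall that both inflations share the same block structure indexed by $[n]$: the points of $A$ (resp.\ $B$) split into $n$ blocks, the $i$-th being a copy of $\sigma_i$ (resp.\ $\tau_i$), and these blocks are laid out according to $\alpha$ in an identical way in $A$ and $B$; namely block $i$ precedes block $j$ in the position order exactly when $i < j$, and in the value order exactly when $\alpha(i) < \alpha(j)$. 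First I would fix, for each $i \in [n]$, a winning strategy $\mathcal{S}_i$ for Duplicator in the $k$-move EF game on $(\sigma_i, \tau_i)$, which exists by the hypothesis $\sigma_i \sim_k \tau_i$.

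Next I would define Duplicator's strategy in the combined game: whenever Spoiler selects a point lying in block $i$ --- whether on the $A$-side or the $B$-side --- Duplicator regards this as a move by Spoiler in the subgame on $(\sigma_i, \tau_i)$ and answers according to $\mathcal{S}_i$, placing her reply in block $i$ of the \emph{other} inflation. Thus Duplicator always answers within the same block, and the moves that happen to be directed at block $i$ constitute a legal (possibly shorter) play of the $k$-move game on $(\sigma_i, \tau_i)$. To verify the winning condition I would split the comparison of two chosen points into two cases. If they lie in the same block $i$, then so do Duplicator's replies, and since $\mathcal{S}_i$ keeps the partial map a partial isomorphism at every intermediate stage (the within-block order of an inflation agrees with that of the corresponding sub-permutation, and an order violation among already-chosen points can never be repaired as the map only grows), both $<_P$ and $<_V$ are preserved. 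If the two points lie in distinct blocks $i \neq j$, then Duplicator's replies also lie in blocks $i$ and $j$, and the comparison of any point of block $i$ with any point of block $j$ --- for both $<_P$ and $<_V$ --- is dictated solely by the relative order of blocks $i$ and $j$, which is the same in $A$ and $B$ because it is governed by $\alpha$. Hence cross-block comparisons are preserved automatically, and combining the two cases shows that $a_\ell \mapsto b_\ell$ is a partial isomorphism, so $A \sim_k B$.

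The main obstacle is the bookkeeping of the parallel subgames rather than any genuine difficulty: I must justify that each block receives at most $k$ of the $k$ total moves, so that the $k$-move strategy $\mathcal{S}_i$ is never asked for more than it can handle, and that it may legitimately be invoked even when only some moves fall in block $i$. Once these points are set straight, order-preservation separates cleanly into the automatic cross-block part, which is forced by the common $\alpha$-layout, and the within-block part, which is delegated to the $\mathcal{S}_i$.
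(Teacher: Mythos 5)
Your proposal is correct and follows essentially the same route as the paper: Duplicator runs the $n$ winning strategies for the pairs $(\sigma_i,\tau_i)$ in parallel, answering each of Spoiler's moves inside the corresponding block of the other inflation, with cross-block comparisons preserved automatically because both inflations share the layout dictated by $\alpha$. The extra bookkeeping you flag (each block sees at most $k$ moves, and a winning $k$-move strategy remains valid on shorter plays) is exactly the observation the paper makes in passing.
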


\begin{proof}
It is easy to demonstrate a winning strategy for Duplicator in the EF game of length $k$ on $\alpha[\sigma_1, \sigma_2, \dots, \sigma_n]$ and $\alpha[\tau_1, \tau_2, \dots, \tau_n]$. She simply keeps track of $n$ EF games, one in each of the pairs $\sigma_i$ and $\tau_i$. Whenever Spoiler makes a move, she notes the corresponding $\sigma_i$ or $\tau_i$, and responds in that game (and the corresponding part of $\alpha$), according to the winning strategy guaranteed by $\sigma_i \EFeq_k \tau_i$.
Since the relationships between different $\sigma$s and $\tau$s are fixed, her move maintains an isomorphism, and since she is never required to move more than $k$ times in any particular $\sigma_i$ or $\tau_i$ she wins at the end.
\end{proof}

There is a corresponding result when 
% the outer permutations are only related by $\EFeq$,
the inflated permutations are only related by $\EFeq$,
and when we inflate all points by the {\em same} permutation.

\begin{proposition}
Let $\alpha \in \S_n$ and $\beta \in \S_m$ and suppose that $\alpha \EFeq_k \beta$.
Take an arbitrary permutation $\sigma$.
Then $\alpha[\sigma, \sigma, \dots, \sigma] \EFeq_k \beta[\sigma, \sigma, \dots, \sigma]$.
\end{proposition}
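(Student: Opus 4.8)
The plan is to run the argument of the ``Magic Lemma'' (\cref{Prop:MagicLemma}) with the roles reversed: there one inflates a \emph{fixed} frame by EF-equivalent pieces, whereas here we inflate the EF-equivalent frames $\alpha$ and $\beta$ by a \emph{fixed} piece $\sigma$. Writing $\ell = |\sigma|$, I would identify each point of $\alpha[\sigma,\dots,\sigma]$ with a pair $(i,p)$, where $i \in \{1,\dots,n\}$ records the block (the point of $\alpha$ that was inflated) and $p \in \{1,\dots,\ell\}$ records the point of $\sigma$ inside that block; points of $\beta[\sigma,\dots,\sigma]$ are encoded as pairs $(j,p)$ with $j \in \{1,\dots,m\}$ in the same way. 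The key structural observation is that the relative order of two points of an inflation, both for $<_P$ and for $<_V$, is governed by $\alpha$ (resp. $\beta$) when they lie in different blocks, and by $\sigma$ when they lie in the same block.

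First I would describe Duplicator's strategy. She maintains, as a shadow, an EF game of length $k$ played on $\alpha$ and $\beta$, in which she follows a winning strategy provided by the hypothesis $\alpha \EFeq_k \beta$. Whenever Spoiler selects a point $(i,p)$ in $\alpha[\sigma,\dots,\sigma]$, Duplicator feeds the block index $i$ to the shadow game as Spoiler's move in $\alpha$, reads off the shadow response $j$ in $\beta$, and answers with the point $(j,p)$; that is, she copies the block-level answer and keeps the within-block coordinate $p$ unchanged. The symmetric rule applies when Spoiler plays in $\beta[\sigma,\dots,\sigma]$. Since each move in the inflation triggers exactly one shadow move, the shadow game lasts at most $k$ rounds, so the winning strategy for the $k$-move game on $\alpha, \beta$ indeed suffices.

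Next I would verify that the resulting map $(i,p) \mapsto (j,p)$ is an isomorphism of the induced submodels, by comparing any two chosen points $(i,p)$ and $(i',p')$ with their images $(j,p)$ and $(j',p')$. If $i \neq i'$, the shadow map being a partial isomorphism of $\alpha$ and $\beta$ gives $j \neq j'$ and guarantees that $j, j'$ compare in $\beta$ exactly as $i, i'$ compare in $\alpha$, for both position and value orders; since cross-block order in an inflation is dictated by the frame, the two pairs have matching $<_P$ and $<_V$ relations. If $i = i'$, then $j = j'$ and the within-block order of both pairs is dictated by the relative order of $p$ and $p'$ in the \emph{same} permutation $\sigma$, hence again matches. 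In both cases the orders are preserved, so Duplicator wins.

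The point that needs care — and the only real subtlety — is the consistency of the shadow responses when Spoiler selects points in the \emph{same} block twice, so that in the shadow game he repeats the element $i$: I must justify that Duplicator answers with the same $j$ each time, which is exactly what legitimises the implication $i = i' \Rightarrow j = j'$ used above. This follows from the shadow strategy being \emph{winning}: such a strategy must preserve equality, so a repeated Spoiler choice forces a repeated Duplicator answer (otherwise the final shadow map would fail to be a well-defined isomorphism between the chosen subsets of $\alpha$ and $\beta$). With this observation in place the case analysis goes through unchanged, and the argument is complete.
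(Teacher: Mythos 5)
Your proof is correct and follows essentially the same route as the paper's: Duplicator runs a shadow $k$-move game on $(\alpha,\beta)$ to select the target block and then copies Spoiler's within-block coordinate verbatim, with cross-block relations preserved by the shadow winning strategy and within-block relations preserved because both sides use the same $\sigma$. Your explicit remark that a winning strategy must preserve equality (so repeated block choices get repeated answers) makes precise a point the paper only handles implicitly when it notes that several marked elements may land in the same copy of $\sigma$.
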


\begin{proof}
\revision{By construction,
the elements of $\alpha[\sigma, \sigma, \dots, \sigma]$
are partitioned into $|\alpha|$ copies of the diagram of $\sigma$,
each copy corresponding to one element $(i,\alpha(i))$ 
(or $\alpha(i)$ for short) of the diagram of $\alpha$.
In this proof, we will refer to the copy corresponding to $\alpha(i)$
as the {\em $\sigma$-inflation} of $\alpha(i)$.
We use a similar terminology for $\beta[\sigma, \sigma, \dots, \sigma]$.}

This time Duplicator needs to follow the winning strategy which is guaranteed from $\alpha \EFeq_k \beta$.
Whenever Spoiler makes a move, 
\revision{Duplicator first notes the element $\alpha(i)$ or $\beta(i)$ 
such that Spoiler's move has been done in the $\sigma$-inflation of $\alpha(i)$ or $\beta(i)$; 
Duplicator then responds in the $\sigma$-inflation of the $\beta(j)$ or $\alpha(j)$ given by the winning strategy for the game on $(\alpha,\beta)$.
Within this copy of $\sigma$, Duplicator chooses the same
element as Spoiler has chosen in his copy.}
% 
% This time Duplicator needs to follow the winning strategy which is guaranteed from $\alpha \EFeq_k \beta$.
% Whenever Spoiler makes a move, Duplicator notes in the inflation of which $\alpha(i)$ or $\beta(i)$ the move has been done
% and responds in the inflation of the $\beta(j)$ or $\alpha(j)$ given by the winning strategy for the game on $(\alpha,\beta)$.
% \revision{
% Within this copy of $\sigma$, Duplicator chooses the same
% element as Spoiler has chosen in his copy.}

\revision{
We note that at the end of the game,
there can be several marked elements in the same copy of $\sigma$.
That relations between elements in different copies of $\sigma$
are the same 
in $\alpha[\sigma, \sigma, \dots, \sigma]$ and $\beta[\sigma, \sigma, \dots, \sigma]$
is ensured by Duplicator following
a winning strategy for the game on $(\alpha,\beta)$.
Since relations inside a single copy of $\sigma$ are the same on both sides,
Duplicator's moves always maintain an isomorphism and she will triumph.}
\end{proof}

%V: I prefer to skip this corollary, it is an easy consequence of the above statements,
%and it might confuse people who would not notice that all sigma_i have to be isomorphic.
%Combining the two previous propositions, we get:
%\begin{corollary}
%  Let $\alpha \in \S_n$ and $\beta \in \S_m$ and suppose that $\alpha \EFeq_k \beta$.
%Suppose further that $\sigma_1, \sigma_2, \dots, \sigma_n$ and $\tau_1, \tau_2, \dots, \tau_m$ are such that $\sigma_i \EFeq_k \tau_j$ for all $1 \leq i \leq n$ and all $1 \leq j \leq m$. Then $\alpha[\sigma_1, \sigma_2, \dots, \sigma_n] \EFeq_k \beta[\tau_1, \tau_2, \dots, \tau_m]$.
%\end{corollary}

\subsection{EF game for formulas with free variables}
In the sequel we will also want to prove results about the (in)expressibility of certain properties 
of \emph{elements of} permutations (as opposed to properties of the permutations themselves). 
% which have associated parameters
% (e.g., whether or not a particular element of a permutation is a fixed point, as opposed to whether or not the permutation has a fixed point).
Recall that expressing such a property in $\TOTO$ means representing it by a formula $\phi(\mathbf{x})$ having one or more free variables, $\mathbf{x}$. 
And for a sequence $\mathbf{a}$ of elements from a permutation $\sigma$, the property would be satisfied by $\mathbf{a}$ in $\sigma$ if and only if $(\pi,\mathbf{a}) \models \phi(\mathbf{x})$.

% \todo[inline]{V: I must admit that the follwing paragraph is a bit dense/abstract for me. \\
% I'll try to give a more ``concrete'' version; since I don't feel very confident,
% I keep both versions for the moment.}
% 
% \todo[inline]{$\backslash$ begin $\{$ Michael's version $\}$}
% There is a standard modification of EF games that allows one to demonstrate inexpressibility in this case as well. Namely we extend the signature by a sequence of constant symbols $\mathbf{c}$ (of the same length as $\mathbf{x}$) and then consider extended models $(\pi, \mathbf{a})$ where the constants are instantiated by the sequence where the property does (or does not) hold. Since Spoiler could choose as his initial moves all the elements instantiating constant symbols in one of the two models, thereby forcing Duplicator to do likewise in the other (or give up the game) the property in question is inexpressible as a formula in $\TOTO$ if and only if it is inexpressible as a sentence in the extended models. In arguments this simply amounts to constructing, for each $k$, two $\EFeq_k$ equivalent models in the extended context (of permutations with certain designated elements) in one of which the property holds for the designated elements, and in the other of which it does not.
% \todo[inline]{$\backslash$ end $\{$ Michael's version $\}$}\medskip
% 
% \todo[inline]{$\backslash$ begin $\{$ Valentin's version $\}$}
There is a standard modification of EF games that allows one to demonstrate inexpressibility in this case as well. %\todo{include reference?} 
Similarly to the previous case, for two permutations $\alpha$ and $\beta$, each equipped with a sequence of ``marked'' elements $\mathbf{a}$ and $\mathbf{b}$ of the same size, say $r$,  
we write $(\alpha, \mathbf{a}) \EFeq_k (\beta, \mathbf{b})$ if both pairs satisfy the same formulas with $r$ free variables and quantifier-depth at most $k$.
% There is a standard modification of EF games, which encodes this equivalence as a winning strategy for Duplicator.
Writing $\mathbf{a}=(a_1,\dots,a_r)$ and $\mathbf{b}=(b_1,\dots,b_r)$, 
the modified EF game with $k$ rounds on such a pair $(\alpha, \mathbf{a})$ and $(\beta, \mathbf{b})$ goes as follows: 
for each $i \leq r$, at round $i$, Spoiler must choose $a_i$ and Duplicator $b_i$; 
then, $k$ additional rounds are played, as in a classical EF game. 
The usual isomorphism criterion (on sequences of $r+k$ chosen elements) determines the winner. 
% 
% Consider two pairs $(\alpha, \mathbf{a})$ and $(\beta, \mathbf{b})$.
% Assume $\mathbf{a}=(a_1,a_2,\dots,a_r)$ and $\mathbf{b}=(b_1,\dots,b_r)$ have the same size $r$.
% Then Spoiler and Duplicator play a usual EF game except that the first $r$ moves of each player are fixed:
% at round $i$, Spoiler must choose $a_i$ and Duplicator $b_i$.
% Each player then play $k$ additional moves and the usual criterion determines the winner.
It can be proved that $(\alpha, \mathbf{a}) \EFeq_k (\beta, \mathbf{b})$ if and only if
Duplicator has a winning strategy in this game.
% \todo[inline]{V: is that correct? I am not sure to have understood correctly\dots Is there a reference for the formula case?}
% \todo[inline]{$\backslash$ end $\{$ Valentin's version $\}$}

We illustrate this method of proof with Proposition~\ref{prop:Fixed_Points_Not_FO_formula} below. 

\begin{proposition}
The property that a given element of a given permutation is a fixed point is not expressible by a formula in \TOTO. 
In other words, there does not exist a formula $\phi(x)$ such that $(\sigma,a) \models \phi(x)$ if and only if $a$ is a fixed point of $\sigma$.
\label{prop:Fixed_Points_Not_FO_formula}
\end{proposition}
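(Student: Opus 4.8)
The plan is to use the modified Ehrenfeucht--Fra\"iss\'e game for one free variable, exhibiting for every $k$ a pair $(\alpha,a)$ and $(\beta,b)$ with $(\alpha,a) \EFeq_k (\beta,b)$, where $a$ is a fixed point of $\alpha$ but $b$ is not a fixed point of $\beta$. Recall that an element $(i,\sigma(i))$ is a fixed point exactly when its rank in the position order equals its rank in the value order. The guiding idea is that a marked point of a decreasing permutation only ``sees'' how many points lie in its NW quadrant and how many lie in its SE quadrant, and that these two quadrants are long decreasing runs which Duplicator can match even when their sizes differ by a little.

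Concretely, I would fix $k$ and set $N = 2^k - 1$. Let $\alpha = \delta_{2N+1}$ with $a$ the element at position $N+1$; since $\alpha$ is decreasing of odd size $2N+1$, this element has value $N+1$ as well, so $a$ is a fixed point. Let $\beta = \delta_{2N+2}$ with $b$ the element at position $N+1$; its value is $N+2 \neq N+1$, so $b$ is not a fixed point. In $\alpha$, the point $a$ splits the remaining elements into a NW block (positions $1,\dots,N$, all above and to the left of $a$) and a SE block (positions $N+2,\dots,2N+1$, all below and to the right of $a$), each a decreasing run of size $N$. Likewise $b$ splits $\beta$ into a NW block of size $N$ and a SE block of size $N+1$. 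All four blocks have size at least $2^k - 1$.

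I would then describe Duplicator's strategy for the game on $(\alpha,a)$ and $(\beta,b)$: after the forced first round matching $a$ with $b$, she runs two independent EF games in parallel, one between the two NW blocks and one between the two SE blocks, responding to each of Spoiler's $k$ free moves in the block where it was played according to the winning strategy for finite linear orders of \cref{prop:EF_linear_orders} (applicable because these blocks are monotone, exactly as in the proof of \cref{Prop:monotone_equivalent}). Since in both permutations every NW point lies above and before both the reference point and every SE point, while every SE point lies below and after both the reference point and every NW point, the only relations Duplicator must actively preserve are those internal to a single block; hence the induced map on all chosen points is an isomorphism and Duplicator wins, giving $(\alpha,a) \EFeq_k (\beta,b)$. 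Finally, were ``$x$ is a fixed point'' expressible by a formula $\phi(x)$ of \TOTO, it would have some quantifier depth $k$, and the EF characterization for formulas with free variables would force $(\alpha,a)\models\phi(x) \Leftrightarrow (\beta,b)\models\phi(x)$, contradicting that $a$ is a fixed point while $b$ is not.

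The main obstacle is the verification of $(\alpha,a) \EFeq_k (\beta,b)$, that is, gluing the two monotone subgames together through the shared reference point; this is a marked-point analogue of \cref{Prop:MagicLemma}, and the only care needed is to check that fixing $a \mapsto b$ in advance is compatible with the parallel subgame strategies and that the cross-quadrant relations are automatically respected on both sides.
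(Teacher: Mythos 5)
Your proof is correct and follows essentially the same route as the paper: the paper takes $\pi_{m,n}=321[\inc_m,1,\inc_n]$ with the central singleton marked and compares $(\pi_{n,n},a_{n,n})$ with $(\pi_{n,n+1},a_{n,n+1})$ for $n=2^k-1$, invoking \cref{Prop:monotone_equivalent} and \cref{Prop:MagicLemma} exactly where you run the two parallel block games. Your only (cosmetic) deviation is inflating $321$ by decreasing rather than increasing runs, so that the ambient permutations are $\delta_{2N+1}$ and $\delta_{2N+2}$; the marked-point gluing you flag as the remaining obstacle is the same step the paper leaves implicit when it applies \cref{Prop:MagicLemma} to pairs with distinguished elements.
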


Of course, it should be noticed that Proposition~\ref{prop:Fixed_Points_Not_FO_formula} is also an immediate consequence of Corollary~\ref{Cor:Fixed_Points_Not_FO}. 
Indeed, if such a formula $\phi(x)$ were to exist, quantifying existentially over $x$ would provide a sentence (namely, $\exists x \,  \phi(x)$)
expressing the existence of a fixed point in \TOTO, therefore contradicting \cref{Cor:Fixed_Points_Not_FO}.

\begin{proof}
%First, observe that an element $a$ of a permutation $\sigma$ is a fixed point if and only if 
%\begin{equation}
%\label{eq:fixPointCriterion}
%\left| \left\{ b \in \sigma : b <_P a \: \mbox{and} \: b >_V a \right\} \right| = \left| \left\{ b \in \pi : b >_P a \: \mbox{and} \: b <_V a \right\} \right|.
%\end{equation}
%The above equation can also be interpreted on the diagram of $\sigma$ as follows. 
%In the diagram of $\sigma$, the dot that represents an element $a$ partitions the rest of the points into four regions as $\begin{array}{c}\begin{tikzpicture}[scale=0.4]
%	\draw (0,0) grid (2,2);
%	\fill (1,1) circle (0.1);
%	\draw (1,1) node[above right] {{$a$}};
%	\end{tikzpicture}\end{array}$. 
%It holds that $a$ is a fixed point if and only if the number of points of $\sigma$ in the upper left region 
%is equal to the number in the lower right region. 
%Later, we shall refer to these regions as \emph{upper left (resp. lower right) regions of $\sigma$ relative to $a$}.

Recall that $\inc_n$ denotes the monotone increasing permutation on $n$ elements, and let $\pi_{m, n} = \inc_m \ominus 1 \ominus \inc_n = 321[\inc_m, 1, \inc_n]$. 
Denote by $a_{m, n}$ the ``central'' element of $\pi_{m, n}$, that is to say the one which inflates the $2$ in $321$. 
Clearly, $a_{m, n}$ is a fixed point of $\pi_{m,n}$ if and only if $m = n$. 

Now, assume that there exists a formula $\phi(x)$ expressing the property that $x$ is a fixed point, 
and denote by $k$ its quantifier depth. 
Taking $n=2^k-1$, it holds that $(\pi_{n, n},a_{n, n}) \EFeq_k (\pi_{n, n+1},a_{n, n+1})$. 
This follows indeed from Propositions~\ref{Prop:monotone_equivalent} and~\ref{Prop:MagicLemma}. 
On the other hand, $a_{n, n}$ is a fixed point of $\pi_{n, n}$ whereas  $a_{n, n+1}$ is not a fixed point of $\pi_{n, n+1}$, 
bringing a contradiction to the fundamental theorem of Ehrenfeucht and Fra\"{i}ss\'{e}. 
\end{proof}

\section{Expressivity of restrictions of \TOTO to permutation classes}
% \section{Definability of fixed points and cycles (MA)}
% \todo[inline]{Add ``in permutation classes'' in the section title}
\label{Sect:Definability_Cycles}

As we discussed in the introduction, \TOTO is not designed to express properties related to the cycle structure of permutations. 
And indeed, in general, \TOTO cannot express such properties. 
We have seen already with \cref{Cor:Fixed_Points_Not_FO} and \cref{prop:Fixed_Points_Not_FO_formula} that
the simplest statements of ``having a fixed point'' or ``a given element is a fixed point'' are not expressible in \TOTO. 
We will see later with \cref{thm:ExpressCycles} that \TOTO is also unable to express that a sequence of elements forms a cycle. 

In this section, we consider restrictions of \TOTO and ask whether some properties of the cycle structure of permutations 
(like containing a fixed point or a cycle of a given size) become expressible in such restricted theories. 
We focus on the restricted theories \TOTOC: 
the signature is the same as in \TOTO, 
but \TOTOC has additional axioms, to ensure that the models considered are only the permutations belonging to some permutation class $\C$. 
Recall that a permutation class is a set of permutations that is closed downward by extraction of patterns, 
and that every permutation class can be characterized by the avoidance of a (possibly infinite) set of classical patterns
\cite{Bevan:brief}.
%\todo{reference (Bona's book and/or Bevan's survey)}

Our goal is a complete characterization of the permutation classes $\C$ for which \TOTOC can express the fixed point (resp. longer cycle) property. 

We actually consider two versions of this problem. 
With the ``sentence'' version, we are interested in finding (or proving the existence of) sentences expressing the existence of a fixed point (resp. of a certain cycle). 
For the ``formula'' version of the problem, given an element (resp. a sequence of elements) of a permutation, 
we ask whether there is a formula with free variable(s) expressing the property that this element is a fixed point (resp. this sequence of elements is a cycle). 

Of course, a positive answer to the ``formula'' problem implies a positive answer to the ``sentence'' problem, 
simply by quantifying existentially over all free variables of the formula. But \emph{a priori} the converse need not be true. 
{\em A posteriori}, we will see that in the case of fixed point the converse does hold
(see Theorem~\ref{thm:fixed_point_expressible_in_classes}):
there exists a sentence expressing the existence of fixed points in \TOTOC
if and only if there exists a formula expressing that a given element is a fixed point in \TOTOC.
This does not however generalize to larger cycles.

\subsection{Fixed points}
In the simplest case of fixed points, 
we have seen with Corollary~\ref{Cor:Fixed_Points_Not_FO} and Proposition~\ref{prop:Fixed_Points_Not_FO_formula}
that there is neither a formula nor a sentence expressing the property of being/the existence of a fixed point in the unrestricted theory \TOTO.

Taking a closer look at the proofs of these statements, we actually know already of some necessary conditions for a class $\C$ 
to be such that fixed points are expressible in \TOTOC. 
First, the proof of \cref{Cor:Fixed_Points_Not_FO} shows that if $\C$ contains all the permutations $\delta_m$ 
then the existence of a fixed point is not expressible in the models belonging to $\C$.
Put formally, we have the following.
\begin{lemma}
\label{lem:NecCond1}
If there exists a sentence in \TOTOC expressing the existence of a fixed point, then $\C$ must avoid at least one decreasing permutation $\delta_k$.
% If $\C$ contains all decreasing permutations $\delta_m$ (for all $m$), then there is no sentence that expresses the existence of a fixed point in \TOTOC. 
\end{lemma}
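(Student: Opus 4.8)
The plan is to argue by contradiction, mimicking the proof of \cref{Cor:Fixed_Points_Not_FO} but now staying entirely inside the class $\C$. Suppose that $\C$ contains every decreasing permutation $\delta_k$, and suppose toward a contradiction that some \TOTOC sentence $\psi$ expresses the existence of a fixed point, with quantifier depth $k$. The key observation is that all the permutations $\delta_m$ are models of \TOTOC, since they belong to $\C$; hence the restriction to $\C$ does not weaken the relevant instance of the argument at all.

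First I would invoke \cref{Prop:monotone_equivalent}, which gives $\delta_{2^k-1} \EFeq_k \delta_{2^k}$ (both sizes being at least $2^k-1$). The crucial point, which I would state explicitly, is that the Ehrenfeucht--Fra\"{i}ss\'{e} equivalence $\EFeq_k$ and the associated fundamental theorem are insensitive to the added axioms of \TOTOC: the game is played on the two permutations as models of the common signature $\SSS_{\mathsf{TO}}$, and a sentence of quantifier depth at most $k$ cannot distinguish $\EFeq_k$-equivalent models, regardless of whether that sentence is thought of as living in \TOTO or in \TOTOC. Consequently $\delta_{2^k-1}$ and $\delta_{2^k}$ agree on $\psi$.

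The contradiction then comes from parity, exactly as in \cref{Cor:Fixed_Points_Not_FO}: the decreasing permutation $\delta_n$ has a fixed point precisely when $n$ is odd (its unique candidate is the central element $\lceil n/2 \rceil$ when $n$ is odd). Thus $\delta_{2^k-1}$ has a fixed point while $\delta_{2^k}$ does not, so one satisfies $\psi$ and the other does not. This contradicts the previous paragraph, completing the proof that no such sentence exists when $\C \supseteq \{\delta_m : m \ge 1\}$; contrapositively, expressibility of the existence of a fixed point in \TOTOC forces $\C$ to avoid some $\delta_k$.

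I do not anticipate a genuine obstacle here, since the statement is a direct transcription of an argument already carried out in the unrestricted setting. The only point that deserves care — and the one I would be sure to spell out — is the legitimacy of using the Ehrenfeucht--Fra\"{i}ss\'{e} machinery within \TOTOC: one must check that restricting the class of admissible models to $\C$ does not alter the $k$-move game, which is immediate because both $\delta_{2^k-1}$ and $\delta_{2^k}$ already lie in $\C$ and the game, together with the fundamental theorem, refers only to the two fixed models and the shared signature.
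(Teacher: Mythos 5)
Your proof is correct and follows essentially the same route as the paper, which derives \cref{lem:NecCond1} directly from the argument of \cref{Cor:Fixed_Points_Not_FO}: the pair $\delta_{2^k-1}$, $\delta_{2^k}$ is $\EFeq_k$-equivalent by \cref{Prop:monotone_equivalent}, both lie in $\C$, and they differ on the fixed-point property by parity. Your explicit remark that the Ehrenfeucht--Fra\"{i}ss\'{e} machinery is unaffected by restricting to $\C$ (since both witnesses already belong to $\C$) is exactly the point the paper leaves implicit.
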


A similar statement along these lines is the following.
\begin{lemma}
\label{lem:NecCond2}
If there exists a sentence in \TOTOC expressing the existence of a fixed point,
then $\C$ must avoid at least one permutation of the form $\pi_{m, n} = 321[\inc_m, 1, \inc_n]$.
% If $\C$ contains all permutations $\pi_{m, n} = 321[\inc_m, 1, \inc_n]$ (for all $n$ and $m$), then there is no sentence that expresses the existence of a fixed point in \TOTOC. 
\end{lemma}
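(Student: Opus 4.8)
The plan is to argue directly, in exactly the same spirit as the proof of Proposition~\ref{prop:Fixed_Points_Not_FO_formula}, using the ``Magic Lemma'' (Proposition~\ref{Prop:MagicLemma}) to manufacture two EF-equivalent permutations of which precisely one has a fixed point. Concretely, assume some sentence $\psi$ of \TOTOC expresses the existence of a fixed point, and let $k = \qdepth(\psi)$. I would set $n = 2^k - 1$ and focus on the two permutations $\pi_{n,n} = 321[\inc_n,1,\inc_n]$ and $\pi_{n,n+1} = 321[\inc_n,1,\inc_{n+1}]$, the goal being to show that $\C$ cannot contain both of them.

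The key step is to establish $\pi_{n,n} \EFeq_k \pi_{n,n+1}$. Since $\inc_n \EFeq_k \inc_{n+1}$ whenever $n, n+1 \geq 2^k-1$ (Proposition~\ref{Prop:monotone_equivalent}), and trivially $\inc_n \EFeq_k \inc_n$ and $1 \EFeq_k 1$, Proposition~\ref{Prop:MagicLemma} applied with $\alpha = 321$ yields
\[
321[\inc_n, 1, \inc_n] \;\EFeq_k\; 321[\inc_n, 1, \inc_{n+1}],
\]
that is, $\pi_{n,n} \EFeq_k \pi_{n,n+1}$, precisely because the choice $n = 2^k - 1$ makes the monotone blocks long enough for Proposition~\ref{Prop:monotone_equivalent} to apply.

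Next I would verify the fixed-point bookkeeping. Writing out the one-line form of $\pi_{m,n}$, the elements of the two increasing blocks sit at positions and values that can never coincide, so the only element that can ever be a fixed point is the central element $a_{m,n}$ inflating the $2$ of $321$; and $a_{m,n}$ is a fixed point exactly when $m = n$. Hence $\pi_{n,n}$ has a (unique) fixed point while $\pi_{n,n+1}$ has none at all. By the fundamental theorem of Ehrenfeucht--Fra\"iss\'e, $\pi_{n,n}$ and $\pi_{n,n+1}$ satisfy the same sentences of quantifier depth at most $k$, so $\psi$ cannot distinguish them. Since $\psi$ by assumption detects fixed points correctly, $\pi_{n,n}$ and $\pi_{n,n+1}$ cannot both lie in $\C$. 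Therefore $\C$ avoids at least one of them, which is a permutation of the required form $\pi_{m,n}$.

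The argument is almost entirely mechanical once these ingredients are assembled; the only point requiring genuine care is the fixed-point bookkeeping in the last paragraph --- namely checking that no fixed point can be hidden in the monotone blocks, so that the absence of a central fixed point really does mean the absence of \emph{any} fixed point. Everything else reuses the EF-game machinery already developed for Proposition~\ref{prop:Fixed_Points_Not_FO_formula}.
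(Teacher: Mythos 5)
Your proposal is correct and follows essentially the same route as the paper: the paper's proof also takes $n = 2^k-1$, combines Propositions~\ref{Prop:monotone_equivalent} and~\ref{Prop:MagicLemma} to get $\pi_{n,n} \EFeq_k \pi_{n,n+1}$, and concludes from the fixed-point discrepancy that $\C$ cannot contain both. The extra care you take with the ``fixed-point bookkeeping'' (checking that no fixed point hides in the monotone blocks) is a detail the paper leaves implicit, but it is correct and does not change the argument.
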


\begin{proof}
The essential argument is in the proof of Proposition~\ref{prop:Fixed_Points_Not_FO_formula}. 
Assume that there exists such a sentence, of quantifier depth $k$. 
Assume in addition that all permutations $\pi_{m, n}$ belong to \C.
Let $n = 2^k-1$, and remark that $\pi_{n, n}$ and $\pi_{n, n+1}$ both belong to $\C$. 
Combining \cref{Prop:MagicLemma,Prop:monotone_equivalent}, we get  $\pi_{n, n} \EFeq_k \pi_{n, n+1}$. 
On the other hand,
remark that $\pi_{n, n}$ has a fixed point while $\pi_{n, n+1}$ does not, 
bringing a contradiction to the fundamental theorem of Ehrenfeucht and Fra\"{i}ss\'{e}. 
\end{proof}
We can conclude that if $\C$ permits the definition of fixed points in the sentence sense,
then $\C$ must not contain all the permutations $321[\inc_m, 1, \inc_n]$, 
in addition to not containing all decreasing permutations. In fact, we shall prove that these conditions are sufficient. 

\begin{theorem}
\label{thm:fixed_point_expressible_in_classes}
  Let $\C$ be a permutation class. The following are equivalent.
  \begin{enumerate}
    \item There exists a sentence $\psi \in \TOTOC$ that expresses the existence of fixed points in $\C$;
      namely, for $\sigma$ in $\C$, $\sigma \models \psi$ if and only if $\sigma$ has a fixed point.
    \item There exists a formula $\phi(x) \in \TOTOC$ that expresses the fact that $a$ is a fixed point of $\sigma$;
      namely, for $\sigma$ in $\C$ and $a$ an element of $\sigma$,
      $(\sigma,a) \models \phi(x)$ if and only if $a$ is a fixed point of $\sigma$.
    \item There exist positive integers $k$, $m$ and $n$ such that $\delta_k$ and $321[\inc_m, 1, \inc_n]$
      do {\em not} belong to the class $\C$.
  \end{enumerate}
%Let $k$, $a$ and $b$ be positive integers. In the class $\Av(\delta_k, \inc_a \ominus 1 \ominus \inc_b)$ there is a formula defining fixed points.
\end{theorem}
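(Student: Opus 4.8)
The plan is to prove the cycle $(2) \Rightarrow (1) \Rightarrow (3) \Rightarrow (2)$. The implication $(2) \Rightarrow (1)$ is immediate: if $\phi(x)$ expresses that $x$ is a fixed point, then $\exists x\, \phi(x)$ expresses the existence of a fixed point in $\C$. For $(1) \Rightarrow (3)$ I would simply combine the two necessary conditions already established: \cref{lem:NecCond1} shows that (1) forces $\C$ to avoid some $\delta_k$, and \cref{lem:NecCond2} shows that (1) forces $\C$ to avoid some $321[\inc_m, 1, \inc_n]$; together these are precisely statement (3). All the substance therefore lies in $(3) \Rightarrow (2)$.

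Fix integers $k, m, n$ with $\delta_k \notin \C$ and $321[\inc_m, 1, \inc_n] \notin \C$. For a point $a = (i, \sigma(i))$ of a permutation $\sigma \in \C$, I would partition the remaining points into the four open quadrants $\mathrm{NW}(a)$, $\mathrm{NE}(a)$, $\mathrm{SW}(a)$, $\mathrm{SE}(a)$ around $a$. The elementary identities $|\mathrm{NW}(a)| + |\mathrm{SW}(a)| = i - 1$ (points left of $a$) and $|\mathrm{SW}(a)| + |\mathrm{SE}(a)| = \sigma(i) - 1$ (points below $a$) show that $a$ is a fixed point if and only if $|\mathrm{NW}(a)| = |\mathrm{SE}(a)|$. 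This reduces the fixed-point test to comparing two quadrant-counts, which is impossible in unrestricted first-order logic; the avoidance hypotheses are what must rescue the argument.

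The crucial step is to show that these hypotheses force $\min(|\mathrm{NW}(a)|, |\mathrm{SE}(a)|)$ to be bounded by a constant. An increasing subsequence of length $p$ lying entirely inside $\mathrm{NW}(a)$, followed by $a$, followed by an increasing subsequence of length $q$ inside $\mathrm{SE}(a)$, is exactly an occurrence of $321[\inc_p, 1, \inc_q]$. Since $\C$ avoids $321[\inc_m, 1, \inc_n]$, for every $a$ either the longest increasing subsequence within $\mathrm{NW}(a)$ has length $< m$ or the one within $\mathrm{SE}(a)$ has length $< n$. Since $\C$ also avoids $\delta_k$, the longest decreasing subsequence inside each quadrant has length $< k$. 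Applying the Erd\H{o}s--Szekeres theorem to whichever quadrant has bounded increasing subsequence yields $|\mathrm{NW}(a)| \le (m-1)(k-1)$ or $|\mathrm{SE}(a)| \le (n-1)(k-1)$, whence $\min(|\mathrm{NW}(a)|, |\mathrm{SE}(a)|) \le B$ with $B := (\max(m,n)-1)(k-1)$. In particular, a fixed point satisfies $|\mathrm{NW}(a)| = |\mathrm{SE}(a)| \le B$.

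Finally I would convert this into a formula. Since ``$y$ lies NW of $x$'' is $y <_P x \et x <_V y$ and ``$y$ lies SE of $x$'' is $x <_P y \et y <_V x$, for each fixed constant $t$ the property ``$x$ has exactly $t$ points to its NW'' is expressible in \TOTO (assert that $t$ distinct such points exist and that $t+1$ do not), and likewise for SE. Setting
\[
\phi(x) = \bigvee_{t=0}^{B} \Big( \big(|\mathrm{NW}(x)| = t\big) \et \big(|\mathrm{SE}(x)| = t\big) \Big),
\]
a finite disjunction of genuine \TOTO formulas, a fixed point satisfies the single disjunct with $t = |\mathrm{NW}(x)| = |\mathrm{SE}(x)|$, while any non-fixed point has $|\mathrm{NW}(x)| \ne |\mathrm{SE}(x)|$ and satisfies none; hence $\phi$ expresses the fixed-point property throughout $\C$, giving (2). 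I expect the main obstacle to be the identification of the right invariant: recognizing that the two avoidance conditions are exactly what is needed to bound $\min(|\mathrm{NW}(a)|, |\mathrm{SE}(a)|)$ uniformly via Erd\H{o}s--Szekeres. Once that bound is secured, the passage to a first-order formula is routine, since bounded counting is expressible.
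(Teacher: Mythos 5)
Your proposal is correct and follows essentially the same route as the paper's proof: the easy implications $(2)\Rightarrow(1)$ and $(1)\Rightarrow(3)$ via the two necessary-condition lemmas, then for $(3)\Rightarrow(2)$ the reduction of the fixed-point property to $|\mathrm{NW}(a)|=|\mathrm{SE}(a)|$ (the paper's sets $U$ and $V$), a uniform bound on these cardinalities obtained from Erd\H{o}s--Szekeres together with the two excluded patterns, and a finite disjunction of bounded-counting formulas. The only (immaterial) difference is that you bound $\min(|\mathrm{NW}(a)|,|\mathrm{SE}(a)|)$ for every point directly, while the paper assumes $a$ is a fixed point and derives a contradiction when both quadrants are large, yielding a slightly different constant.
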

\revision{Before starting the proof, we recall for the reader's convenience
a famous result of Erd\H{o}s and Szekeres \cite{Erdos1935A-combinatorial},
which is used in several proofs throughout this section:
any permutation of size at least $ab+1$ contains either 
an increasing subsequence of size $a+1$ or a decreasing one of size $b+1$.
}
\begin{proof}
As mentioned earlier, 2 trivially implies 1 (by existential quantification over $x$). 
We have also seen in Lemmas~\ref{lem:NecCond1} and~~\ref{lem:NecCond2} that, 
if there exists a sentence expressing the existence of a fixed point in \C, 
then \C must exclude one permutation $\delta_k$ and one permutation of the form $321[\inc_m, 1, \inc_n]$; 
in other words 1 implies 3.
% if a class $\C$ contains all decreasing permutations, or all permutations of the form $321[\inc_m, 1, \inc_n]$,
% then there is no sentence $\psi \in \TOTO$ expressing the existence of fixed points in $\C$.
% In other words 1 implies 3.
Finally, we prove that 3 implies 2.
Suppose that there exist positive integers $k$, $m$ and $n$ such that $\delta_k \notin \C$ and $321[\inc_m, 1, \inc_n]\notin \C$.
% $\C$ does contain neither $\delta_k$ nor $\inc_m \ominus 1 \ominus \inc_n$ for some integers $k$, $m$ and $n$.
\revision{Let $\sigma \in \C$ be given and consider a point $a$
in the domain $A^\sigma$ of $\sigma$
(recall from \cref{sec:perm_as_models} that, in \TOTO,
$A^\sigma=\{(i,\sigma(i)),\, 1\le i \le n\}$).}
We consider
\begin{align*}
  U &= \left\{ y \in \revision{A^\sigma} : y <_P a \: \mbox{and} \: y >_V a \right\} \\
  V &= \left\{ y \in \revision{A^\sigma} : y >_P a \: \mbox{and} \: y <_V a \right\} .
\end{align*}
Clearly, $a$ is a fixed point if and only if $|U| = |V|$.

Assume now that $a$ is a fixed point.
We will show that there is a bound on $|U|$, uniform on all fixed points of all permutations $\sigma$ in $\C$
(but depending on $\C$).
Suppose without loss of generality that $m \geq n$.
If $|U| = |V| > (m-1)(k-3)$ then the subpermutations of $\sigma$ on $U$ and $V$ must both contain either $\delta_{k-2}$ or $\inc_m$
by the Erd\H{o}s-Szekeres theorem. 
But if either contained $\delta_{k-2}$ then $\sigma$ would contain $\delta_k$, 
while if both contained $\inc_m$ then $\sigma$ would contain $321[\inc_m, 1, \inc_n]$.
Since $\C$ is a class, contains $\sigma$ but contains neither $\delta_k$ nor $321[\inc_m, 1, \inc_n]$,
in all cases we reach a contradiction.

So, if $a$ is a fixed point then necessarily $|U|, |V| \leq (m-1)(k-3)$.
Now, using the above characterization of fixed points, 
it is easy to see how to construct a formula (with one free variable $x$) that expresses that $a$ is a fixed point: 
namely, we take the (finite) disjunction over $i \leq (m-1)(k-3)$ of
``there exist exactly $i$ points in $U$ and exactly $i$ points in $V$''.
%V: I find the following quite obvious, and comment it
%%%%%%%%%%%%%%%%%%%%%%%%%
%Note that expressing that there are exactly $i$ points in $U$ (resp. $V$) can easily be achieved as follows. 
%Let $\phi_U(x,y)$ be the formula $y <_P x \wedge y >_V x$, which indicates that $y$ belongs to the $U$-region identified by $x$. 
%Then, that there are exactly $i$ points in $U$ is expressed by the formula 
%\begin{align*}
%\exists y_1 \dots \exists y_i \, & \big(y_1 \neq y_2 \wedge \dots \wedge y_1 \neq y_i \wedge \dots \wedge y_{i-1} \neq y_i\big) \\
% & \wedge \big(\phi_U(x,y_1) \wedge \dots \wedge \phi_U(x,y_i)\big) \\
% &\wedge \big(\forall z \, (z \neq y_1 \wedge \dots \wedge z \neq y_i) \rightarrow \neg \phi_U(x,z)\big) \qedhere
%\end{align*}
\end{proof}

\begin{remark}
  The following fact was used implicitly in the above proof, and will also be in subsequent proofs.
  Let $\phi_1(x)$ and $\phi_2(x)$ be formulas in \TOTO.
  For a fixed $k$, the property ``there exists exactly $k$ elements $x$ satisfying $\phi_1$''
  is expressible in \TOTO (and similarly with $\phi_2$). Simply write
  \begin{align*}
    \exists y_1 \dots \exists y_{\revision{k}} \, & \big(y_1 \neq y_2 \wedge \dots \wedge y_1 \neq y_{\revision{k}} \wedge \dots \wedge y_{{\revision{k}}-1} \neq y_{\revision{k}}\big) \\
  & \wedge \big(\phi_1(x) \leftrightarrow (x=y_1 \vee \dots \vee x=y_k) \big) 
\end{align*}
It is however not in full generality possible to express the fact that there
are as many elements satisfying $\phi_1$ as elements satisfying $\phi_2$.
Therefore, being able to bound the number of elements in $U$ and $V$ in the above proof is key.
\end{remark}
\begin{remark}
\label{rk:fixed_point_expressible_in_classes}
In preparation of the next section, it is helpful to notice that, 
in the proof of \cref{thm:fixed_point_expressible_in_classes}, 
the Erd\H{o}s-Szekeres theorem can also be used to identify 
the obstructions $\delta_k$ and $321[\inc_m, 1, \inc_n]$. 
Indeed, consider \revision{$\sigma \in \C$, $a \in A^\sigma$} and suppose that $a$ is a fixed point of $\sigma$. 
Consider $U$ and $V$ as defined in the above proof, and assume that $|U|$ and $|V|$ are ``large'', namely at least $(m-1)^2+1$. Then,
using the Erd\H{o}s-Szekeres theorem, in each of those two regions we can choose a monotone subsequence of length $m$, 
and deleting all elements other than $a$ and those subsequences we see that $\C$ contains at least one of the following four permutations:
\begin{center}
	\begin{tikzpicture}[scale=0.5]
		\begin{scope}[shift={(0,0)}]
			\draw (0,0) grid (2,2);
			\draw (0.1, 1.9) -- (0.9, 1.1);
			\draw (1.1, 0.9) -- (1.9, 0.1);
			\draw (1.0, -0.4) node {\scriptsize $\alpha_{1,m}$};
			\fill (1,1) circle (0.1);
		\end{scope}
		\begin{scope}[shift={(3,0)}]
			\draw (0,0) grid (2,2);
			\draw (0.1, 1.1) -- (0.9, 1.9);
			\draw (1.1, 0.9) -- (1.9, 0.1);
			\draw (1.0, -0.4) node {\scriptsize $\alpha_{2,m}$};
			\fill (1,1) circle (0.1);
		\end{scope}
		\begin{scope}[shift={(6,0)}]
			\draw (0,0) grid (2,2);
			\draw (0.1, 1.9) -- (0.9, 1.1);
			\draw (1.1, 0.1) -- (1.9, 0.9);
			\draw (1.0, -0.4) node {\scriptsize $\alpha_{3,m}$};
			\fill (1,1) circle (0.1);
		\end{scope}
		\begin{scope}[shift={(9,0)}]
			\draw (0,0) grid (2,2);
			\draw (0.1, 1.1) -- (0.9, 1.9);
			\draw (1.1, 0.1) -- (1.9, 0.9);
			\draw (1.0, -0.4) node {\scriptsize $\alpha_{4,m}$};
			\fill (1,1) circle (0.1);
		\end{scope}
	\end{tikzpicture}
\end{center}
where in each case both the monotone segments contain $m$ points. 
Since \C is a class, it holds that if \C contains $\alpha_{i,m}$, then \C also contains all the permutations of the same general shape but with the two monotone sequences of arbitrary length at most $m$.
Assuming that there is no bound on $|U|$ and $|V|$, the usual EF game argument based on \cref{Prop:monotone_equivalent} 
shows that there is no formula expressing the property of being a fixed point in \TOTOC  
(as done in the proof of Lemma~\ref{lem:NecCond2}).
So, for fixed points to be definable, for each $i$, $\C$ must avoid some $\alpha_{i,m}$. 
Note though that avoiding $\alpha_{1,m}$ already implies avoiding $\alpha_{2,2m+1}$ and $\alpha_{3,2m+1}$ so we can ignore the middle two configurations 
thus obtaining our necessary obstructions $\delta_k$ and $321[\inc_m, 1, \inc_n]$, which can next be proved to be sufficient for expressibility of fixed points. 
\end{remark}

\subsection{Larger cycles and stable subpermutations}
% \todo[inline]{Or only cycles or stable subpermutations in the subsection title?}

After describing the classes $\C$ where fixed points are expressible by a formula in \TOTOC, 
we wish to similarly describe classes in which we can express that a given sequence of elements is a cycle. 
This is achieved with \cref{thm:ExpressCycles} below, 
the notation $E(\pi, \mathbf{i}, \overline{\X})$ used in this theorem being defined later in this section. 
The classes $\I$ and $\D$ are the classes of monotone increasing and decreasing permutations, respectively. 

\begin{theorem}
\label{thm:ExpressCycles}
Let $\C$ be a permutation class, and $k$ be an integer. The following are equivalent.
\begin{enumerate}
\item There is a formula $\phi(\mathbf{x})$ of \TOTOC with free variables $\mathbf{x} = (x_1, \dots, x_k)$ 
such that for all $\sigma \in \C$ and all sequences $\mathbf{a} = (a_1, \dots, a_k)$ of elements of $\sigma$, 
$(\sigma,\mathbf{a}) \models \phi(\mathbf{x})$ if and only if $\mathbf{a}$ is a cycle of $\sigma$.
\item For each $k$-cycle $\pi$, for each non-trivial cycle $\mathbf{i}$ of distinct elements from $[k+1]$, 
and for each sequence $\overline{\X}$ of the same length as $\mathbf{i}$, consisting of $\I$s and $\D$s, 
the class $\C$ avoids at least one permutation in each class of the form $E(\pi, \mathbf{i}, \overline{\X})$.
\end{enumerate}
\end{theorem}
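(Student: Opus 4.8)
The plan is to treat this as the $k$-point generalization of \cref{thm:fixed_point_expressible_in_classes}, replacing the single equation $|U|=|V|$ attached to a fixed point by a system of balance equations attached to the grid cut out by the $k$ marked points. The first step is to recast ``$\mathbf{a}$ is a cycle'' as a purely order-theoretic counting condition. Write the marked points in position order and let $\pi\in\S_k$ be the pattern they induce. The set $\{a_1,\dots,a_k\}$ is closed under the bijection $\sigma$ (hence a union of cycles) precisely when its set of position-ranks equals its set of value-ranks; and when this holds, standardizing the common rank-set shows that the cycle structure of $\sigma$ restricted to these points is exactly $\pi$. Thus $\mathbf{a}$ is a single $k$-cycle if and only if $\pi$ is a $k$-cycle \emph{and} the position-rank set equals the value-rank set. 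The $k$ points cut the diagram into $(k+1)^2$ cells; writing $N_{c,r}$ for the number of unmarked points in vertical strip $c$ and horizontal strip $r$ (for $0\le c,r\le k$), comparing the two sorted rank-sequences term by term unfolds the rank-set equality into the balance equations $\sum_{r} N_{c,r}=\sum_{r} N_{r,c}$ for every index, equivalently $\sum_{r\neq i} N_{i,r}=\sum_{c\neq i} N_{c,i}$ for all $i$ since the diagonal cells $N_{i,i}$ cancel. For $k=1$ this is exactly the reading $|U|=|V|$ off the four cells around a fixed point.

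For $2\Rightarrow 1$, I would first bound the off-diagonal cells whenever a cycle is present. View the counts $N_{c,r}$ with $c\neq r$ as a flow on the complete digraph on $\{0,\dots,k\}$; the balance equations say exactly that this flow is a circulation. A circulation decomposes into simple directed cycles, and there are only boundedly many simple cycles on $k+1$ vertices, so if the total off-diagonal mass were large then some simple cycle $\mathbf{i}$ (of distinct elements, since consecutive edges are off-diagonal) would carry large flow, forcing every cell along $\mathbf{i}$ to contain many points. By the Erd\H{o}s-Szekeres theorem each such cell then contains a long monotone subsequence of some direction; recording these directions as $\overline{\X}$ and deleting everything else exhibits an arbitrarily large member of $E(\pi,\mathbf{i},\overline{\X})$ inside $\sigma$, contradicting hypothesis~2. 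Hence, whenever the marked points form a cycle in a permutation of $\C$, every off-diagonal cell count is bounded by a constant $B=B(\C,k)$. Expressibility then follows as in \cref{thm:fixed_point_expressible_in_classes}: I would take the disjunction, over all $k$-cycles $\pi$ and over all balanced profiles $(N_{c,r})_{c\neq r}$ with entries at most $B$, of the formula asserting that $x_1,\dots,x_k$ induce $\pi$ (via $\psi_\pi$ from \cref{prop:classical_pattern}) and that each off-diagonal region contains \emph{exactly} $N_{c,r}$ points (each such clause being expressible as recalled in the remark after \cref{thm:fixed_point_expressible_in_classes}). Within $\C$ this coincides with forming a cycle: non-cycles either fail to induce a $k$-cycle or violate some balance equation and so match no disjunct, while genuine cycles are balanced with bounded cells by the claim.

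For $1\Rightarrow 2$, I would argue the contrapositive with an Ehrenfeucht-Fra\"iss\'e game, exactly in the spirit of \cref{lem:NecCond2}. If 2 fails, some family $E(\pi,\mathbf{i},\overline{\X})$ is entirely contained in $\C$. Given a target quantifier depth $d$, build a skeleton consisting of the $k$ marked points in pattern $\pi$ together with one representative point in each cell along the cyclic chain $\mathbf{i}$, and inflate those representatives by monotone sequences of the directions prescribed by $\overline{\X}$, leaving the marked points as singletons. Taking all block lengths equal to $m\ge 2^k-1$ makes the marked points a cycle, since the balance equations read $L_t=L_{t-1}$ around $\mathbf{i}$; increasing a single length to $m+1$ breaks one balance equation and destroys the cycle. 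Both permutations lie in $\C$, and they are $\EFeq_d$ by \cref{Prop:MagicLemma} combined with \cref{Prop:monotone_equivalent} (a monotone block of length $\ge 2^d-1$ is $\EFeq_d$ to any longer one, and the singleton marked points match trivially). Since one tuple is a cycle and the other is not, no depth-$d$ formula expresses the property, and letting $d\to\infty$ contradicts~1.

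I expect the main obstacle to be the bookkeeping of the second paragraph: verifying that rank-set equality is genuinely equivalent to the cell balance equations, and then matching the circulation-decomposition obstructions precisely to the families $E(\pi,\mathbf{i},\overline{\X})$. In particular one must check that ``$\C$ omits one permutation of each $E(\pi,\mathbf{i},\overline{\X})$'' is strong enough to bound \emph{all} off-diagonal cells (this is what the passage through a single large simple cycle achieves), and that the monotone directions produced by Erd\H{o}s-Szekeres range over exactly the sequences $\overline{\X}$ quantified in~2, so that no obstruction is missed. The EF construction of the last paragraph is then routine once the families have been set up so that the cycle property toggles along the chain $\mathbf{i}$.
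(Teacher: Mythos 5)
Your proof is correct and follows essentially the same route as the paper: the matching-marginals/balance characterization of cycle-closedness (\cref{obs:matching_marginals}), the decomposition of the resulting circulation into simple directed cycles (\cref{Lemma:Matching marginals}), the Erd\H{o}s-Szekeres theorem to extract the obstruction classes $E(\pi,\mathbf{i},\overline{\X})$ and bound the off-diagonal cells, and an EF-game argument via \cref{Prop:monotone_equivalent} and \cref{Prop:MagicLemma} for the inexpressibility direction. The only difference is organizational: the paper first proves the more general \cref{thm:ExpressStablePerm} on stable occurrences of an arbitrary fixed pattern and then deduces \cref{thm:ExpressCycles} by a disjunction over $k$-cycles, whereas you argue directly for cycles.
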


Several remarks about this theorem should be made. 
First, in the case of transpositions, the second condition takes a neater form,
similar to the one we found for fixed points.
Next, note that it characterizes the classes $\C$ for which there exists a formula (with free variables) expressing that a given sequence of elements is a cycle, 
but it does not characterize the classes where the \emph{existence} of a cycle could be expressed by a sentence in \TOTOC. 
We have not been able to provide such a characterization. 
Finally, the main theorem that we shall prove in this section is not exactly \cref{thm:ExpressCycles}, 
but a variant of it (see \cref{thm:ExpressStablePerm}),
involving the notion of {\em stable subpermutation}. 

For $\pi \in \S_k$ and $\sigma \in \S_n$, we say that $\pi$ is a \emph{stable subpermutation} of $\sigma$ 
%\todo{Ma: What about \emph{stable subpermutation} instead? Mi: I agree, and done.}
if there is some $k$-element subset $\Sigma \subseteq [n]$ such that $\sigma$ maps $\Sigma$ to $\Sigma$, and  the pattern of $\sigma$ on $\Sigma$ equals $\pi$. 
We call the set $\Sigma$ (or a sequence consisting of its elements) a \emph{stable occurrence} of $\pi$ in $\sigma$.  
That is, the stable subpermutations that $\sigma$ contains are just the subpermutations (or patterns) defined on unions of the cycles of $\sigma$. 

For example, $1$ is a stable subpermutation of $\sigma$ if and only if $\sigma$ has a fixed point, and the fixed points of $\sigma$ are precisely the stable occurrences of 1 in it. Or consider $\pi = 231$. The permutation $\sigma = 2413$ contains $\pi$ in the sense of pattern containment since the pattern of $241$ is $231$. But $\pi$ is not a stable subpermutation of $\sigma$ since $\sigma$ is a four-cycle so in fact its only stable permutations are the empty permutation and itself. On the other hand $\sigma = 4356712$ does contain $231$ as a stable subpermutation since $(1,4,6)$ is a cycle of $\sigma$ and the pattern of that cycle is $231$.

We now wish to consider the question: 
\emph{given $\pi \in \S_k$, in which permutation classes $\C$ is there a formula of $\phi(\mathbf{x})  \in \TOTOC$ with $k$ free variables such that for $\sigma \in \C$,  
$(\sigma,\mathbf{a}) \models \phi(\mathbf{x})$ if and only if $\mathbf{a}$ is a stable occurrence of $\pi$ in $\sigma$?}
To answer this question, we extend some ideas already presented in the fixed point case (in the proof of \cref{thm:fixed_point_expressible_in_classes} and in \cref{rk:fixed_point_expressible_in_classes}). 
The arguments being however more complicated, we first present an intermediate case: that of transpositions, i.e. stable occurrences of $21$.

\subsubsection{A formula recognizing transpositions}

About transpositions, we prove the following statement. 

\begin{theorem}
\label{thm:formula_for_transpositions}
  Let $\C$ be a permutation class. The following are equivalent.
  \begin{enumerate}
    \item There exists a formula $\phi(x,y) \in \TOTOC$ that expresses the fact that $(a,b)$ is a transposition in $\sigma$;
      namely, for $\sigma$ in $\C$ and $(a,b)$ a pair of elements of $\sigma$,
      $(\sigma,(a,b)) \models \phi(x,y)$ if and only if $(a,b)$ is a transposition of $\sigma$.
    \item There exist integers $k\geq 2$, $m\geq 1$ and $n\geq 1$ such that $\delta_k$ and $\inc_m \ominus \inc_n$
      do {\em not} belong to the class $\C$.
  \end{enumerate}
\end{theorem}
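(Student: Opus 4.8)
The plan is to mirror closely the strategy used for fixed points in \cref{thm:fixed_point_expressible_in_classes}, replacing the single diagonal point by a $21$-pattern whose two points are swapped by $\sigma$. The crux is a counting characterization of transpositions. Suppose $a <_P b$ and $b <_V a$, and write $a=(i,\sigma(i))$, $b=(j,\sigma(j))$; then $(a,b)$ is a transposition exactly when $\sigma(i)=j$ and $\sigma(j)=i$, i.e.\ when the value-rank of $a$ equals the position-rank of $b$, and the value-rank of $b$ equals the position-rank of $a$. Expanding each rank as a sum of indicators $\sum_y [\,y \le_\bullet \cdot\,]$ and cancelling the common terms, these two equalities become $|A_1|=|B_1|$ and $|A_2|=|B_2|$, where
\begin{align*}
A_1 &= \{y : b <_P y \et y <_V a\}, & B_1 &= \{y : y <_P b \et a <_V y\},\\
A_2 &= \{y : a <_P y \et y <_V b\}, & B_2 &= \{y : y <_P a \et b <_V y\}.
\end{align*}

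For the implication $3\Rightarrow 1$ I would first bound these four regions whenever $(a,b)$ is a transposition. Assume $\delta_k,\inc_m\ominus\inc_n\notin\C$ with, say, $m\ge n$. A decreasing subsequence of length $k-1$ inside $B_1$ (resp.\ $A_1$) together with $b$ (resp.\ $a$) would yield $\delta_k$; hence by the Erd\H{o}s--Szekeres theorem a large $B_1$ or $A_1$ must contain a long \emph{increasing} subsequence. Since every point of $B_1$ lies northwest of every point of $A_1$, two such increasing subsequences produce $\inc_m\ominus\inc_n$. As $|A_1|=|B_1|$ for a transposition, both are therefore bounded by a constant $N$ depending only on $\C$, and the same argument bounds $|A_2|,|B_2|$. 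The formula is then $\phi(x,y)=\Phi(x,y)\ou\Phi(y,x)$, where $\Phi(x,y)$ is the conjunction of $\psi_{21}(x,y)$ with the two finite disjunctions ``$|A_1|=|B_1|=p$'' and ``$|A_2|=|B_2|=q$'' over $p,q\le N$, each expressible in \TOTO via the counting device from the remark following \cref{thm:fixed_point_expressible_in_classes}. This is correct because a transposition forces $\psi_{21}$ and bounded balanced regions, whereas on any other pair either $\psi_{21}$ fails or at least one of the two equalities fails, so no disjunct holds.

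For $1\Rightarrow 2$ I would argue the contrapositive, using that condition~2 is equivalent to ``$\C$ avoids some $\delta_k$ \emph{and} some $\inc_m\ominus\inc_n$'' (the two existential witnesses may be chosen independently). So it suffices to treat two cases. If $\C$ contains all $\delta_k$, consider $\delta_5[\delta_t,1,\delta_t,1,\delta_t]=\delta_{3t+2}$ and $\delta_5[\delta_t,1,\delta_t,1,\delta_{t+1}]=\delta_{3t+3}$ with the two singleton inflations marked: the marked pair (at positions $t+1$ and $2t+2$) is a transposition in the first but not the second, while for $t=2^k-1$ the marked-point version of \cref{Prop:MagicLemma} together with \cref{Prop:monotone_equivalent} gives $\EFeq_k$. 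If instead $\C$ contains all $\inc_m\ominus\inc_n$, I would use $\alpha=3412[1,\inc_{n-1},1,\inc_{n-1}]=\inc_n\ominus\inc_n$, in which the two marked singletons form the transposition $(1,n+1)$, and $\beta=3412[1,\inc_{n-1},1,\inc_{n}]=\inc_n\ominus\inc_{n+1}$, in which they do not; again $\alpha\EFeq_k\beta$ for $n$ large by the same two propositions. Both permutations stay inside the relevant class, so in either case a depth-$k$ formula would contradict the fundamental theorem of Ehrenfeucht and Fra\"{i}ss\'{e}.

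I expect the main obstacle to be the bounding step in $3\Rightarrow 1$ and the accompanying check that \emph{only} the two stated obstructions arise: one must verify that a long decreasing subsequence in any of the four regions is always absorbed into a $\delta_k$, so that the ``mixed'' increasing/decreasing configurations which complicated the fixed-point analysis (the middle patterns $\alpha_{2,m},\alpha_{3,m}$ of \cref{rk:fixed_point_expressible_in_classes}) do not generate new obstructions here. The secondary subtlety is ensuring that the marked-point extension of \cref{Prop:MagicLemma} legitimately applies to the singleton inflations in both necessity constructions while keeping each constructed permutation within the relevant class.
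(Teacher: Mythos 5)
Your proof is correct, and it follows the same overall strategy as the paper (Erd\H{o}s--Szekeres to bound balanced regions for sufficiency, EF games on inflations of monotone sequences for necessity), but with genuinely different combinatorial bookkeeping. For the sufficiency direction the paper records the nine cell counts of the $3\times 3$ grid around the marked inversion, characterizes stability by three matching-marginal equations, and decomposes the resulting matrix as a nonnegative combination of eight cycle matrices (\cref{Lemma:Matching marginals}) whose coefficients $c_4,\dots,c_8$ it then bounds; you instead derive directly from the rank identities the two balance equations $|A_1|=|B_1|$ and $|A_2|=|B_2|$ (equivalent to two of the three marginal equations, the third being redundant since total row and column sums agree) and bound the four regions themselves. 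This bypasses the cycle-matrix machinery entirely and yields a shorter formula; the price is that it does not generalize, whereas the paper's setup is deliberately built to extend to stable occurrences of arbitrary $\pi$ in \cref{thm:ExpressStablePerm}. Your key verification --- that a long decreasing run in any of the four regions always extends by one of the two marked points to a $\delta_k$, so no ``mixed'' obstructions as in \cref{rk:fixed_point_expressible_in_classes} arise --- does check out: every point of $B_1$ and $B_2$ is northwest of $b$, and $a$ is northwest of every point of $A_1$ and $A_2$. For necessity, the paper argues via unbounded coefficients $c_4$ and $c_5$ and the four configurations $\alpha^4_{i,n,m}$, then extracts the two obstructions; your contrapositive splits $\neg 2$ cleanly into ``$\C \supseteq \D$'' and ``$\C$ contains all $\inc_m\ominus\inc_n$'' and exhibits in each case an explicit indistinguishable marked pair ($\delta_{3t+2}$ vs.\ $\delta_{3t+3}$, and $\inc_n\ominus\inc_n$ vs.\ $\inc_n\ominus\inc_{n+1}$), which is more direct; the marked-point extension of \cref{Prop:MagicLemma} that you invoke is exactly what the paper itself uses without further comment in \cref{prop:Fixed_Points_Not_FO_formula} and in its own necessity argument. (Two cosmetic points: the theorem has only two conditions, so your ``$3\Rightarrow 1$'' should read ``$2\Rightarrow 1$''; and your symmetrization $\Phi(x,y)\ou\Phi(y,x)$ is a slightly more careful treatment of the unordered pair than the paper's.)
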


The strategy to prove \cref{thm:formula_for_transpositions} is the same as in the fixed point case. 
Given a transposition $(a,b)$ of a permutation $\sigma$, we identify subsets of elements of $\sigma$
whose cardinality must satisfy some constraints (like $|U| = |V|$ in the fixed point case). 
Then, as in \cref{rk:fixed_point_expressible_in_classes}, we determine necessary conditions on a permutation class $\C$ 
for \TOTOC to possibly express that a given pair of points is a transposition. 
And finally, we prove as in \cref{thm:fixed_point_expressible_in_classes} that these conditions are also sufficient. 

\medskip

Consider a permutation $\sigma$ where two points $a$ and $b$ forming an occurrence of $21$ have been identified. 
The designated copy of $21$ splits the remaining elements of $\sigma$ into nine regions forming a $3 \times 3$ grid, as in 
\[\sigma = \begin{array}{c}
	\begin{tikzpicture}[scale=0.25]
% 	\draw (-0.8,1.5) node {\large $\sigma =$};
	\draw (0,0) grid (3,3);
	\fill (1,2) circle (0.15);
	\fill (2,1) circle (0.15);
%    \draw (0.5,1.5) node {\scriptsize $a_{2,1}$};
%    \draw (0.5,2.5) node {\scriptsize $a_{3,1}$};
%    \draw (1.5,0.5) node {\scriptsize $a_{1,2}$};
%    \draw (1.5,2.5) node {\scriptsize $a_{3,2}$};
%    \draw (2.5,0.5) node {\scriptsize $a_{3,1}$};
%    \draw (2.5,1.5) node {\scriptsize $a_{3,2}$};
	\end{tikzpicture}
\end{array}.\]
For $1 \le i,j \le 3$, we denote $a_{i,j}$ the number of elements in the region in
the $i$-th row and $j$-th column (indexing rows from bottom to top).
Call $A(\sigma,(a,b))=(a_{i,j})$ the corresponding $3\times 3$ matrix.
The designated points form a \emph{stable} occurrence of $21$ (i.e., a transposition) 
if and only if 
\begin{itemize}
  \item the number of other points of $\sigma$ in the first column equals those in the first row, 
    i.e. $a_{2,1}+a_{3,1}=a_{1,2}+a_{1,3}$ (we simplified a summand $a_{1,1}$, appearing on each side);
  \item and likewise for the second and third columns and rows, i.e.
    $a_{1,2}+a_{3,2}=a_{2,1}+a_{2,3}$ and $a_{1,3}+a_{2,3}=a_{3,1}+a_{3,2}$.
\end{itemize}
These equalities are the analogue of the characterization $|U|=|V|$
of fixed points, used in the previous section.
Note that the elements lying in any diagonal cell contribute equally to the corresponding sums,
and are therefore irrelevant.
We claim that a nonnegative matrix $A$ satisfies the above equality 
if and only if it is a nonnegative linear combination $\sum_{i=1}^8 c_i A^{(i)}$,
where (indexing matrix rows from bottom to top):

\scalebox{.9}{$\
A^{(1)}=\begin{pmatrix}
  0&0&0\\
  0&0&0\\
  1&0&0
\end{pmatrix},\
A^{(2)}=\begin{pmatrix}
  0&0&0\\
  0&1&0\\
  0&0&0
\end{pmatrix},\
A^{(3)}=\begin{pmatrix}
  0&0&1\\
  0&0&0\\
  0&0&0
\end{pmatrix},\
A^{(4)}=\begin{pmatrix}
  0&0&0\\
  1&0&0\\
  0&1&0
\end{pmatrix}
$} \\
\scalebox{.9}{$\
A^{(5)}=\begin{pmatrix}
  1&0&0\\
  0&0&0\\
  0&0&1
\end{pmatrix},\
A^{(6)}=\begin{pmatrix}
  0&1&0\\
  0&0&1\\
  0&0&0
\end{pmatrix},\
A^{(7)}=\begin{pmatrix}
  0&1&0\\
  1&0&0\\
  0&0&1
\end{pmatrix},\
A^{(8)}=\begin{pmatrix}
  1&0&0\\
  0&0&1\\
  0&1&0
\end{pmatrix}
$}\\
From the equalities $a_{2,1}+a_{3,1}=a_{1,2}+a_{1,3}$, $a_{1,2}+a_{3,2}=a_{2,1}+a_{2,3}$ and $a_{1,3}+a_{2,3}=a_{3,1}+a_{3,2}$, 
the existence of such a decomposition is not hard to prove ``greedily'', 
i.e. choosing, for increasing $i$, each $c_i$ as large as possible. 
Details are skipped here since it actually follows 
as a particular case of the coming \cref{Lemma:Matching marginals}.
\begin{proof}[Proof of \cref{thm:formula_for_transpositions} (1. implies 2.)]
Let $\C$ be a permutation class. 
% such that there exists a formula $\phi(x,y) \in \TOTOC$ expressing the fact that two points form a transposition. 
% Let $(a,b)$ be a pair of points in a permutation $\sigma \in \C$ which form a transposition. 
For each $\sigma$ in $\C$ and pairs $(a,b)$ of elements of $\sigma$ forming an inversion,
we consider the corresponding matrix $A(\sigma,(a,b))$ and its above decomposition.

Assume that this yields arbitrarily large coefficients $c_4$.
This means that entries $a_{2,1}$ and $a_{1,2}$ of $A(\sigma,(a,b))$ can be
simultaneously both arbitrary large.
Put differently, the class contains permutations of the form
$2413[\tau_1,1,\tau_2,1]$ with arbitrary large permutations $\tau_1$ and $\tau_2$.
%, where the two black dots correspond to elements forming a transposition. 
Then, using Erd\H{o}s-Szekeres theorem as in \cref{rk:fixed_point_expressible_in_classes}, $\C$ contains arbitrarily large permutations
of one of the following four types ($n$ and $m$ denoting the sizes of the monotone segments):
\begin{center}
	\begin{tikzpicture}[scale=0.5]
	\begin{scope}[shift={(0,0)}]
		\draw (0,0) grid (3,3);
		\fill (1,2) circle (0.15);
		\fill (2,1) circle (0.15);
		\draw (0.1, 1.1) -- (0.9,1.9);
		\draw (1.1, 0.1) -- (1.9,.9);
		\draw node at (1.5,-1) {$\alpha^4_{1,n,m}$};
	\end{scope}
	\begin{scope}[shift={(4,0)}]
		\draw (0,0) grid (3,3);
		\fill (1,2) circle (0.15);
		\fill (2,1) circle (0.15);
		\draw (0.1, 1.1) -- (0.9,1.9);
		\draw (1.9, 0.1) -- (1.1,.9);
		\draw node at (1.5,-1) {$\alpha^4_{2,n,m}$};
	\end{scope}
	\begin{scope}[shift={(8,0)}]
		\draw (0,0) grid (3,3);
		\fill (1,2) circle (0.15);
		\fill (2,1) circle (0.15);
		\draw (0.9, 1.1) -- (0.1,1.9);
		\draw (1.1, 0.1) -- (1.9,.9);
		\draw node at (1.5,-1) {$\alpha^4_{3,n,m}$};
	\end{scope}
	\begin{scope}[shift={(12,0)}]
		\draw (0,0) grid (3,3);
		\fill (1,2) circle (0.15);
		\fill (2,1) circle (0.15);
		\draw (0.9, 1.1) -- (0.1,1.9);
		\draw (1.9, 0.1) -- (1.1,.9);
		\draw node at (1.5,-1) {$\alpha^4_{4,n,m}$};
	\end{scope}
	\end{tikzpicture}.
\end{center}
If this is the case,  
an EF-game argument using \cref{Prop:monotone_equivalent,Prop:MagicLemma}
shows that transpositions are not definable in $\C$.
Indeed, we simply consider a large permutation $\sigma$ as above with both segments representing monotone sequences of the same size
and the permutation $\sigma'$ obtained by adding a point in one of this monotone sequence.
Then if $(a,b)$ and $(a',b')$ denote the elements corresponding to the marked elements (i.e., the black dots) in $\sigma$ and $\sigma'$ respectively,
Duplicator wins the EF-games in $k$ rounds on $(\sigma,(a,b))$ and $(\sigma',(a',b'))$,
but $(a,b)$ is a transposition in $\sigma$, while $(a',b')$ is not in $\sigma'$. 
Therefore, for 1. to hold, for each $i \leq 4$, the class $\C$ must avoid a permutation $\alpha^4_{i,n,m}$ for some $n$ and $m$. 
For $i=1$, this implies that there exists $n$, and $m$ such that $\inc_n \ominus \inc_m$ does not belong to $\C$. 

The same argument looking at the coefficient $c_5$
where the large permutations are taken to be decreasing
implies the existence of a $k$ such that $\dec_k$ does not belong to $\C$. 
\end{proof}

\begin{proof}[Proof of \cref{thm:formula_for_transpositions} (2. implies 1.)]
Suppose that there exists $k \geq 2$, $n\geq 1$ and $m\geq 1$ such $\dec_k$ 
and $\inc_n \ominus \inc_m$ do not belong to $\C$. 
Suppose w.l.o.g. that $m \geq n$. 
Let $\sigma \in \C$ and let $(a,b)$ be a pair of elements of $\sigma$ forming a transposition. 
We recall that the matrix $A(\sigma,(a,b))$ then writes as a linear combination
$\sum_{i=1}^8 c_i A^{(i)}$.
We will prove that we can bound the possible values of the coefficients $c_4, c_5, \dots, c_8$.

% \begin{itemize}
%  \item 
If $c_4 > (m-1)(k-1)$, we have $\min(a_{1,2},a_{2,1}) > (m-1)(k-1)$ and
 by the Erd\H{o}s-Szekeres theorem,
 the corresponding regions of $\sigma$ contain either $\delta_{k}$ or $\inc_m$.
 Since $\delta_{k}$ is not in \C, they should both contain $\inc_m$.
 This is however impossible since then \C would contain $\inc_n \ominus \inc_m$.
 We conclude that $c_4 \le (m-1)(k-1)$.
%  \item 
 The same argument shows that $\max(c_4,\dots,c_8) \le (m-1)(k-1)$.
 This does however not apply to the coefficients $c_1$, $c_2$ and $c_3$.

 We conclude that, when $(a,b)$ forms a transposition,
 the non-diagonal coefficients of $A(\sigma,(a,b))$ are all bounded by $2(m-1)(k-1)$.
 Recall that an inversion $(a,b)$ is a transposition if and only if
 \begin{equation}
   a_{2,1}+a_{3,1}=a_{1,2}+a_{1,3},\quad
         a_{1,2}+a_{3,2}=a_{2,1}+a_{2,3}, \quad a_{1,3}+a_{2,3}=a_{3,1}+a_{3,2}
   \label{eq:CharcTranspo}
 \end{equation}
 A formula expressing that $(a,b)$ is a transposition can therefore be obtained 
 as a conjunction of two formulas. The first one simply says that $(a,b)$ is an inversion.
 The second one is a big disjunction over lists $(a_{1,2},a_{1,3},a_{2,1},a_{2,3},a_{3,1},a_{3,2})$
 in $\{0,1,2, \dots ,2(m-1)(k-1)\}^6$ satisfying Eq.\eqref{eq:CharcTranspo} of the fact that
 \begin{itemize}
   \item there are exactly $a_{1,2}$ elements smaller than $a$ and $b$ in value order 
     (i.e. in the first row of $A(\sigma,(a,b))$)
     and between $a$ and $b$ in position order (i.e. in the second column of $A(\sigma,(a,b))$).
   \item and similar conditions involving $a_{1,3},a_{2,1},a_{2,3},a_{3,1}$ and $a_{3,2}$.\qedhere
 \end{itemize} 
 \end{proof}
 % Therefore if such occurrences are to be definable,  the first three pictures in \eqref{eq:Cases21} each show us four types of permutations not all of which may belong to $\C$ (obtained by replacing the empty circles by monotone segments), and the latter two each show us eight such types. Again if we wish to be parsimonious there is considerable collapse - the second type with both segments decreasing means we must avoid some decreasing pattern, which means that we need only include increasing segments elsewhere giving a total of six different types of permutation not all of which must belong to $\C$ rather than the 28 we started with. With these restrictions in place the converse argument is not difficult.
% \todo[inline]{In fact we only need to avoid $\delta_k$ and $\inc_m \ominus \inc_m$, no? Should we state that as a theorem?}

This example illustrates that the distinction between ``formulas that recognize cycles/stable occurrences'' 
and ``sentences satisfied if a cycle/stable occurrence exists'' is a real one. 
Namely, in the class $\D$ of all monotone decreasing permutations there is no formula that recognizes transpositions, 
but it is easy to write a sentence that is satisfied if and only if a transposition exists -- specifically that the permutation contain at least two points.

\subsubsection{Extension to larger stable permutations}
We now generalize  \cref{thm:formula_for_transpositions} to subpermutations of larger size. 
Let a permutation $\pi \in \S_k$ be given. 
Our general goal is to characterize those classes $\C$ for which there is a formula with $k$ free variables 
expressing that $k$ given points of a permutation, $\sigma$, form a stable occurrence of $\pi$. 

We start by generalizing the partition of the elements of $\sigma$ in regions presented in the transposition case. 
Suppose that $\sigma$ is a permutation and $\mathbf{s}$ is a specific occurrence of $\pi$ in $\sigma$. 
Then $\mathbf{s}$ partitions $\sigma \setminus \mathbf{s}$ into $(k+1)^2$ regions -- or \emph{cells} --
where two elements of $\sigma \setminus \mathbf{s}$ belong to the same cell if 
their positional and value relationships to the elements of $\mathbf{s}$ are identical. 
These cells are naturally arranged in a $(k+1) \times (k+1)$ grid 
where the elements of two cells in the same row share the same value relationships with $\mathbf{s}$ 
while elements of two cells in the same column share the same positional relationships with $\mathbf{s}$. 
We associate to the pair $(\sigma, \mathbf{s})$ a matrix of non-negative integers $A(\sigma, \mathbf{s})$ 
whose entry in row $i$ and column $j$ is the number of elements of $\sigma \setminus \mathbf{s}$ belonging to the cell in row $i$ and column $j$. 
A specific example is shown in Figure \ref{Figure: Grid of an occurrence}. 
As above, we index matrix rows from bottom to top,
to maintain a geometric correspondence between these matrices and the diagrams of the corresponding permutations. 

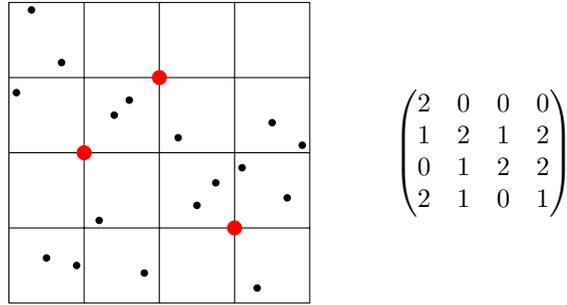
\begin{figure}
\begin{center}
	\begin{tikzpicture}[scale=1]
	\draw (0,0) grid (4,4);
	\fill[red] (1,2) circle (0.1);
	\fill[red] (2,3) circle (0.1);
	\fill[red] (3,1) circle (0.1);
	\fill (0.1, 2.8) circle (0.05);
	\fill (0.3, 3.9) circle (0.05);
	\fill (0.5, 0.6) circle (0.05);
	\fill (0.7, 3.2) circle (0.05);
	\fill (0.9, 0.5) circle (0.05);
	\fill (1.2, 1.1) circle (0.05);
	\fill (1.4, 2.5) circle (0.05);
	\fill (1.6, 2.7) circle (0.05);
	\fill (1.8, 0.4) circle (0.05);
	\fill (2.25, 2.2) circle (0.05);
	\fill (2.5, 1.3) circle (0.05);
	\fill (2.75, 1.6) circle (0.05);
	\fill (3.1, 1.8) circle (0.05);
	\fill (3.3, 0.2) circle (0.05);
	\fill (3.5, 2.4) circle (0.05);
	\fill (3.7, 1.4) circle (0.05);
	\fill (3.9, 2.1) circle (0.05);
	\draw (5,2) node[right] {$\left( \begin{matrix}
	2 & 0 & 0 & 0 \\ 1 & 2 & 1 & 2 \\ 0 & 1 & 2 & 2 \\ 2 & 1 & 0 & 1 
	\end{matrix} \right)$};
	\end{tikzpicture}
\end{center}
\caption{An occurrence of {\color{red} 231} in a permutation of $\S_{20}$ together with the corresponding matrix of cell sizes. The occurrence is not stable since corresponding row and column sums are not all equal.}
\label{Figure: Grid of an occurrence}
\end{figure}

An $m \times m$ matrix $A = (a_{ij})$ with non-negative integer entries will be said to have \emph{matching row and column marginals} if, for each $1 \leq i \leq m$,
\[
\sum_{j=1}^m a_{ij} = \sum_{j=1}^m a_{ji},
\]
i.e., the sum of entries in each row is equal to the sum of entries in the corresponding column. 
As in the case of transpositions, we have the following characterization of stable occurrences of $\pi$. 
\begin{observation}
\label{obs:matching_marginals}
An occurrence $\mathbf{s}$ of $\pi$ in a permutation $\sigma$ is stable if and only if the matrix $A(\sigma, \mathbf{s})$ has matching row and column marginals.
\end{observation}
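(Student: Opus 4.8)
The plan is to reduce the statement to a transparent identity about the positions and values of the marked points: I would compute both families of marginals explicitly in terms of the gaps between consecutive marked points, and then match them by a telescoping argument, observing that the resulting condition is exactly stability.

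First I would fix notation for the occurrence. Let $p_1 < p_2 < \dots < p_k$ be the positions (column coordinates) of the $k$ points of $\mathbf{s}$, and let $v_1 < v_2 < \dots < v_k$ be the sorted list of their values (row coordinates). Since the grid lines pass exactly through the marked points, every marked point lies on a boundary rather than in the interior of a cell; consequently the columns (resp. rows) of $A(\sigma,\mathbf{s})$ partition the unmarked positions (resp. values) into the open intervals delimited by the $p_i$ (resp. $v_i$). The key computation is then that the $j$-th column marginal $C_j := \sum_i a_{ij}$ counts the unmarked points whose position lies in the $j$-th positional strip, and likewise the $i$-th row marginal $R_i := \sum_j a_{ij}$ counts the unmarked points whose value lies in the $i$-th value strip. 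Explicitly, $C_1 = p_1 - 1$, $C_j = p_j - p_{j-1} - 1$ for $2 \le j \le k$, and $C_{k+1} = n - p_k$, with the analogous formulas for $R_i$ in terms of the $v_i$.

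Next I would pass to cumulative sums, which converts matching marginals into a set equality. Summing the formulas above telescopes to $\sum_{l \le i} C_l = p_i - i$ and $\sum_{l \le i} R_l = v_i - i$ for each $1 \le i \le k$ (both totalling $n-k$ at $i=k+1$). Since the passage from a finite sequence to its sequence of partial sums is invertible, $A(\sigma,\mathbf{s})$ has matching marginals (that is, $R_i = C_i$ for every $i$) if and only if $v_i = p_i$ for every $i$. Finally I would observe that the latter is precisely stability: the sorted value list and sorted position list agree iff the value set $V = \sigma(P) = \{\sigma(p) : p \in P\}$ equals the position set $P = \{p_1, \dots, p_k\}$ as sets, which is exactly the condition $\sigma(P) = P$ defining a stable occurrence.

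I do not expect a genuine obstacle here, as the content is a direct count; the $21$ case treated above is the instance $k=2$. The only point requiring care is the boundary bookkeeping — ensuring that marked points are never double-counted and that the gap formulas for the first and last strips ($C_1$, $C_{k+1}$ and their row analogues) are correct — together with recalling that the marginal condition at index $k+1$ is automatically implied by the first $k$, since both $\sum_i R_i$ and $\sum_i C_i$ equal the number $n-k$ of unmarked points.
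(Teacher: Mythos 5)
Your argument is correct, and it supplies a proof for a statement the paper leaves as an unproved observation (the paper only writes out the $3\times 3$ instance for transpositions, where the marginal conditions are listed explicitly after cancelling the diagonal entry). Your telescoping of the strip counts $C_j=p_j-p_{j-1}-1$ and $R_i=v_i-v_{i-1}-1$ to the partial sums $p_i-i$ and $v_i-i$ is exactly the counting the paper's discussion of the $21$ case implicitly relies on, so the approach is the same, just carried out in full generality.
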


Let $m$ be a positive integer, and let $\mathbf{i} = (i_1, i_2, \dots, i_r)$ be a sequence of distinct elements belonging to $[m]$. Let $A_\mathbf{i}$ be the $m \times m$ matrix whose entries in positions $(i_t, i_{t+1})$ for $1 \leq t < r$ and $(i_r, i_1)$ are 1 and whose other entries are 0. That is, $A_{\mathbf{i}}$ is the adjacency matrix of the directed cycle $i_1 \to i_2 \to \cdots \to i_r \to i_1$. We will call a matrix of this type a \emph{cycle matrix}. Note that if $r = 1$ then $A_{\mathbf{i}}$ contains a single element on the diagonal and we consider these to be \emph{trivial} cycle matrices (and the corresponding sequences $\mathbf{i}$ will also be called trivial).

\begin{lemma}
\label{Lemma:Matching marginals}
If $A = (a_{ij})$ is an $m \times m$ matrix with non-negative integer entries and matching row and column marginals, then it can be written as a linear combination with non-negative integer coefficients of cycle matrices.
\end{lemma}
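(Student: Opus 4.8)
The plan is to interpret the matrix $A = (a_{ij})$ as the adjacency matrix of a directed multigraph $G$ on vertex set $[m]$, with $a_{ij}$ parallel arcs from $i$ to $j$ (a diagonal entry $a_{ii}$ contributing $a_{ii}$ loops at $i$). Under this dictionary, the $i$-th row sum of $A$ is the out-degree of vertex $i$ and the $i$-th column sum is its in-degree, so the matching-marginals hypothesis says precisely that $G$ is \emph{balanced}: every vertex has equal in- and out-degree. A cycle matrix $A_{\mathbf{i}}$ is exactly the adjacency matrix of a single simple directed cycle (a loop when $\mathbf{i}$ is trivial). So the statement to prove becomes the classical fact that a balanced directed multigraph decomposes its arcs into simple directed cycles. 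I would prove this by induction on the total number of arcs $N = \sum_{i,j} a_{ij}$.

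The key step is the following extraction lemma: if $G$ is balanced and has at least one arc, then $G$ contains a simple directed cycle. To see this, pick any vertex $v_0$ incident to an arc; since $G$ is balanced, the out-degree of $v_0$ equals its in-degree, which is positive, so $v_0$ has an outgoing arc. Follow it to a vertex $v_1$; now $v_1$ has positive in-degree, hence positive out-degree, so it too has an outgoing arc. Continuing greedily produces an arbitrarily long directed walk $v_0 \to v_1 \to v_2 \to \cdots$ that never gets stuck. Since $[m]$ is finite, some vertex must repeat: taking the first repetition $v_q = v_p$ with $p < q$, the segment $v_p \to v_{p+1} \to \cdots \to v_q = v_p$ visits distinct vertices $v_p, \dots, v_{q-1}$ and hence is a simple directed cycle, whose vertex sequence $\mathbf{i} = (v_p, \dots, v_{q-1})$ is a list of distinct elements of $[m]$ (trivial, i.e. a loop, exactly when $q = p+1$).

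For the inductive step I would set $A' = A - A_{\mathbf{i}}$, where $A_{\mathbf{i}}$ is the cycle matrix of the extracted cycle. The arcs of the cycle genuinely lie in $G$, so $A'$ still has non-negative integer entries, and its total weight is strictly smaller. Crucially, $A'$ again has matching marginals: each vertex $v_t$ on the cycle loses exactly one outgoing arc (one unit from its row) and exactly one incoming arc (one unit from its column), so its row and column sums both drop by one, while all other vertices are untouched. By the induction hypothesis $A'$ is a non-negative integer combination of cycle matrices, and adding back $A_{\mathbf{i}}$ yields the desired decomposition of $A$; the base case $N = 0$ is the empty combination.

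I do not expect a serious obstacle here, since this is the digraph analogue of Euler's theorem; the only points that need care are, first, checking in the extraction lemma that the walk really cannot terminate, which is where balancedness is used at each step (positive in-degree forces positive out-degree), and second, verifying that the peeled-off cycle is \emph{simple}, i.e. has distinct vertices, so that it corresponds to an honest cycle matrix $A_{\mathbf{i}}$ as defined and so that subtracting it decrements each relevant row and column sum by exactly one, thereby preserving the matching-marginals property that lets the induction run.
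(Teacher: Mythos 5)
Your proof is correct and follows essentially the same route as the paper's: interpret $A$ as the adjacency matrix of a balanced directed multigraph, peel off a simple directed cycle, and induct. The paper merely asserts that a cycle exists and that removing it preserves balancedness, whereas you supply the walk-based extraction argument explicitly; this is a welcome elaboration, not a different approach.
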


\begin{proof}
Such a matrix $A$ is the adjacency graph (with multiplicities) of a directed multigraph (possibly including loops) on the vertex set $\{1,2,\dots,m\}$ where there are $a_{ij}$ directed edges from $i$ to $j$ (for each $1 \leq i, j \leq m$). The matching marginal condition implies that the indegree of each vertex is equal to its outdegree (loops contribute to both the indegree and outdegree of a vertex). It is clear that every such graph contains a directed cycle, and removing such a cycle leaves a graph of the same type. By induction, such graphs are unions of directed cycles, which yields to the claimed result.
\end{proof}

For $\pi \in \S_k$ and $\mathbf{i}$ a sequence of $r$ distinct elements from $[k+1]$ 
we define the \emph{expansion of $\pi$ by $\mathbf{i}$}, $E(\pi, \mathbf{i})$ to be 
that permutation of length $k + r$ containing a stable occurrence, $\mathbf{s}$ of $\pi$ 
and for which $A(E(\pi, \mathbf{i}), \mathbf{s}) = A_{\mathbf{i}}$. An example is given in Figure \ref{Figure: Expansion}. 
Given a sequence of $r$ permutations $\Theta = (\theta_1, \theta_2, \dots, \theta_r)$ we further define the \emph{inflation of $\pi$ by $\Theta$ on $\mathbf{i}$} 
to be the permutation, $E(\pi, \mathbf{i}, \Theta)$, obtained by inflating those points of $E(\pi, \mathbf{i})$ corresponding to the elements of $\mathbf{i}$ 
in left to right order by the permutations $\theta_1$, $\theta_2$, \dots, $\theta_r$. 
This naturally extends to inflation by permutation classes: 
if $\overline{\X} = (\X_1, \X_2, \dots, \X_r)$ is a sequence of permutation classes, 
then we define the \emph{inflation of $\pi$ by $\overline{\X}$ on $\mathbf{i}$} to be 
the permutation class, $E(\pi, \mathbf{i}, \overline{\X})$,  consisting of all the subpermutations of permutations in $E(\pi, \mathbf{i}, \Theta)$ 
where for $1 \leq t \leq r$, $\theta_t \in \X_t$.

\begin{figure}
\[
\revision{\begin{pmatrix}
  1&0&0&0&0\\
  0&0&0&0&0 \\
  0&0&0&0&0 \\
  0&0&0&0&1 \\
  0&1&0&0&0
\end{pmatrix}}
\qquad
\begin{array}{c}
\begin{tikzpicture}[scale=1]
	\draw (0,0) grid (5,5);
	\fill[red] (1,2) circle (0.1);
	\fill[red] (2,4) circle (0.1);
	\fill[red] (3,1) circle (0.1);
	\fill[red] (4,3) circle (0.1);
	\fill (1.5,.5) circle (0.1);
	\fill(4.5, 1.5) circle (0.1);
	\fill (.5,4.5) circle (0.1);
\end{tikzpicture}
\end{array}\]
\caption{\revision{Left: the matrix $A_{(1,2,5)}$.
Right: the diagram of the permutation $E(2413, (1,2,5)) = 7416253$,
enlightning the stable occurrence $\bm s$ of 2413 such that
$A(7416253,\bm s)=A_{(1,2,5)}$.}}
\label{Figure: Expansion}
\end{figure}
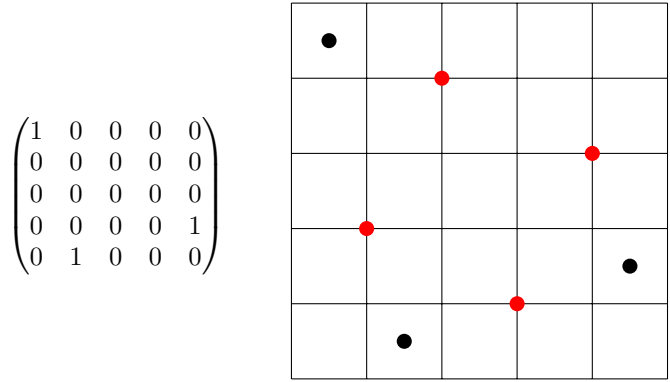

We can now state the following generalization of \cref{thm:formula_for_transpositions}. 
Note that the analogue of \cref{thm:formula_for_transpositions} is actually only the equivalence between $1.$ and $2.$, 
but it is proved by showing that $1. \Leftrightarrow 3. \Leftrightarrow 2.$, hence the third condition below.

\begin{theorem}
\label{thm:ExpressStablePerm}
Let $\pi$ be a permutation of size $k$, and let $\C$ be a permutation class. The following are equivalent. 
  \begin{enumerate}
    \item There exists a formula $\phi(x_1, \dots, x_k) \in \TOTOC$ that expresses the fact that 
    a sequence $\mathbf{s}=(s_1, \dots, s_k)$ is a stable occurrence of $\pi$ in $\sigma$.
    \item For each non-trivial cycle $\mathbf{i}$ of distinct elements from $[k+1]$ 
    and each sequence $\overline{\X}$ consisting of $\I$s and $\D$s of the same length 
    as $\mathbf{i}$ the class $\C$ does not contain $E(\pi, \mathbf{i}, \overline{\X})$, 
    i.e., it avoids at least one permutation in each such class.
    \item There is a positive integer $M$ such that if $\sigma \in \C$ and $\mathbf{s}$ is a stable occurrence of $\pi$ in $\sigma$ 
    then the sum of the non-diagonal entries of $A(\sigma, \mathbf{s})$ is at most $M$. 
  \end{enumerate}
\end{theorem}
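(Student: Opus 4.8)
The plan is to follow the template of \cref{thm:formula_for_transpositions}, but to route everything through the cycle-matrix machinery of \cref{obs:matching_marginals} and \cref{Lemma:Matching marginals}. Concretely, I would prove $3 \Rightarrow 1$ directly, $3 \Leftrightarrow 2$ combinatorially, and $1 \Rightarrow 3$ by contraposition via an Ehrenfeucht--Fra\"iss\'e argument, so that the two equivalences $1 \Leftrightarrow 3$ and $3 \Leftrightarrow 2$ together give the theorem.

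First I would prove $3 \Rightarrow 1$. Assume the uniform bound $M$. By \cref{obs:matching_marginals}, a sequence $\mathbf{s}$ is a stable occurrence of $\pi$ exactly when it is an occurrence of $\pi$ (expressible by $\psi_\pi$ from \cref{prop:classical_pattern}) and $A(\sigma,\mathbf{s})$ has matching row and column marginals. Since each diagonal entry $a_{ii}$ cancels on both sides of the marginal equalities, stability depends only on the non-diagonal entries, and by hypothesis each of these is at most $M$ whenever $\mathbf{s}$ is stable. Each non-diagonal cell $(i,j)$ is a \TOTO-definable region (a conjunction of $<_P$- and $<_V$-comparisons with the free variables $x_1,\dots,x_k$), and ``this cell contains exactly $c$ elements'' is expressible by the counting device recorded in the remark following \cref{thm:fixed_point_expressible_in_classes}. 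The desired formula is then
\[
\psi_\pi(x_1,\dots,x_k)\;\wedge\;\bigvee_{A}\,\bigwedge_{(i,j)}\big(\text{cell }(i,j)\text{ contains exactly }A_{ij}\text{ elements}\big),
\]
where the disjunction ranges over the finitely many matrices $A$ with matching marginals whose non-diagonal entries lie in $\{0,\dots,M\}$, and the conjunction is over non-diagonal cells.

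Next I would establish $3 \Leftrightarrow 2$. For $\neg 2 \Rightarrow \neg 3$: if $\C$ contains an entire class $E(\pi,\mathbf{i},\overline{\X})$ for some non-trivial cycle $\mathbf{i}$, then inflating the $r$ cycle points by monotone blocks of growing size yields permutations of $\C$ in which the fixed occurrence $\mathbf{s}$ of $\pi$ remains stable (the scaled cycle matrix $A_{\mathbf{i}}$ still has matching marginals) while the non-diagonal sum grows unboundedly. Conversely, for $\neg 3 \Rightarrow \neg 2$, I would apply \cref{Lemma:Matching marginals} to write $A(\sigma,\mathbf{s})=\sum_{\mathbf{i}} c_{\mathbf{i}} A_{\mathbf{i}}$; only non-trivial cycles contribute off the diagonal and there are finitely many cycles in $[k+1]$, so an unbounded non-diagonal sum forces some fixed non-trivial $\mathbf{i}$ to carry an unbounded coefficient along a subsequence of examples. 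Then all $r$ cells along $\mathbf{i}$ are simultaneously large, Erd\H{o}s--Szekeres extracts a long monotone subsequence from each, and recording its type ($\I$ or $\D$) gives a vector $\overline{\X}$; passing to a further subsequence on which $\overline{\X}$ is constant (finitely many choices) and deleting all other points shows $\C$ contains arbitrarily large inflations $E(\pi,\mathbf{i},\Theta)$ with monotone blocks, hence the whole class $E(\pi,\mathbf{i},\overline{\X})$, contradicting $2$.

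Finally I would obtain $1 \Rightarrow 3$ by contraposition, reusing the structure just produced. Given $\neg 3$, we have $\C \supseteq E(\pi,\mathbf{i},\overline{\X})$ for some non-trivial $\mathbf{i}$ and type vector $\overline{\X}$. Fixing a putative formula of quantifier depth $d$ and setting $N=2^d-1$, let $\sigma$ inflate the $r$ cycle points of $E(\pi,\mathbf{i})$ into monotone blocks (oriented by $\overline{\X}$) all of size $N$, and let $\sigma'$ be the same with a single block of size $N+1$. Scaling $A_{\mathbf{i}}$ shows $\mathbf{s}$ is a stable occurrence in $\sigma$ but not in $\sigma'$. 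As both are inflations of the common base $E(\pi,\mathbf{i})$ with corresponding blocks $\EFeq_d$-equivalent (monotone blocks by \cref{Prop:monotone_equivalent}, the $k$ singleton $\pi$-points to themselves), the marked-points analogue of \cref{Prop:MagicLemma} gives $(\sigma,\mathbf{s}) \EFeq_d (\sigma',\mathbf{s})$, so no depth-$d$ formula can separate them, contradicting $1$. The main obstacle will be the two pigeonholes in $\neg 3 \Rightarrow \neg 2$ --- because the cycle-matrix decomposition is not unique, one must argue carefully that unbounded off-diagonal mass genuinely localizes onto a single non-trivial cycle $\mathbf{i}$ and a single type vector $\overline{\X}$ --- together with verifying that \cref{Prop:MagicLemma} carries over to configurations with marked points.
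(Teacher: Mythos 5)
Your proposal is correct and follows essentially the same route as the paper: the same formula construction for $3\Rightarrow 1$, the same use of \cref{Lemma:Matching marginals}, the pigeonhole over the finitely many non-trivial cycles and type vectors, Erd\H{o}s--Szekeres, and the \cref{Prop:MagicLemma}-based EF argument with one block enlarged by a single point. The only (immaterial) difference is organizational — you prove $2\Leftrightarrow 3$ via the contrapositive $\neg 3\Rightarrow\neg 2$ and reuse it inside $1\Rightarrow 3$, whereas the paper folds that derivation into its $\neg 3\Rightarrow\neg 1$ argument and separately gives a direct proof of $2\Rightarrow 3$ with the explicit bound $M=C(k+1)((m-1)^2+1)$ — and the two caveats you flag at the end (non-uniqueness of the cycle decomposition, the marked-points version of \cref{Prop:MagicLemma}) are handled, or glossed over, in exactly the same way by the paper.
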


As in the case of transposition, the key point of the proof is to bound the non-diagonal entries
in the matrices $A(\sigma, \mathbf{s})$, when $\mathbf{s}$ is a stable occurrence of $\pi$. 

\begin{proof}
That $3.$ implies $1.$ is easy and similar to the transposition case. 
Namely we build $\phi(x_1, \dots, x_k)$ as a conjunction of two formulas. 
The first one simply indicates that the chosen points of $\sigma$ form an occurrence of $\pi$. 
The second (and main) one will indicate that this occurrence is stable. 
It is obtained as follows. 
From the assumption in $3.$ and \cref{obs:matching_marginals}, this second part of $\phi(x_1, \dots, x_k)$ 
can be taken as a disjunction, over finitely many matrices, 
of formulas asserting that the number of entries in each (non-diagonal) cell relative to the purported occurrence of $\pi$ 
is given by the corresponding element of the matrix. 
The matrices that need to be considered are those of size $(k+1)\times(k+1)$ 
with matching row and column marginals, non-negative integer entries, zero entries on the diagonal, 
and such that the sum of all entries is at most $M$. 

Suppose now that $3.$ fails. 
For each positive integer $M$ choose a permutation $\sigma_M \in \C$ and a stable occurrence $\mathbf{s}_M$ of $\pi$ in $\sigma_M$ such that 
the sum of the non-diagonal entries of $A_M:=A(\sigma_M,\mathbf{s}_M)$ is greater than $M$.
(W.l.o.g., since $\C$ is a class, we assume that the diagonal entries of $A_M$ are all $0$.) 
For each $M$, $A_M$ satisfies the conditions of Lemma \ref{Lemma:Matching marginals} 
and so we can choose a representation of it as a linear combination with non-negative integer coefficients of matrices $A_{\mathbf{i}}$ 
for non-trivial cycles $\mathbf{i}$ of distinct elements from $[k+1]$ ($k+1$ being independent of $M$). 
Let $\mathbf{i}_M$ be that cycle for which the coefficient of $A_{\mathbf{i}}$ in the chosen representation is maximized. 
Note that, when $M$ grows to infinity, the coefficient of $A_{\mathbf{i}_M}$ also goes to infinity. 

The sequence $(\mathbf{i}_M)_M$ taking its values in a finite set, 
it contains an infinite subsequence whose elements are all equal to a single $\mathbf{i}$. 
Consider $E(\pi, \mathbf{i})$, the expansion of $\pi$ by $\mathbf{i}$. 
Since the coefficient of $A_{\mathbf{i}}$ is unbounded in the considered subsequence of $(A_M)$ we conclude that, 
for every $n$ there exists a permutation $\theta_n \in \C$ which is the inflation of $\pi$ on $\mathbf{i}$ by permutations of size at least $(n-1)^2 + 1$. 
Now take a monotone subsequence of length $n$ in each of these inflating permutations.
Passing to a subsequence again if necessary we can assume that for each $i_j$,
the type of the monotone subsequence by which we inflate $i_j$ does not depend on $n$.

In other words there is a sequence $\overline{\X}$ consisting of $\I$s and $\D$s such that $\C$ contains the class $E(\pi, \mathbf{i}, \overline{\X})$. 
But now we can apply Proposition \ref{Prop:MagicLemma} to conclude that there can be no formula of \TOTOC expressing stable occurrences of $\pi$, 
thus showing that $1.$ fails. 
Namely, for any $k$ we can now construct two $\EFeq_k$ equivalent permutations in $\C$ each with a marked occurrence of $\pi$ such that 
in one the occurrence is stable and in the other it is not. For the one where it is stable, 
we simply take monotone sequences all of the same sufficiently great length (e.g., length $2^k$) of the required types 
and form the inflation of $\pi$ on $\mathbf{i}$ by those sequences. 
For the other we take basically the same inflation but add a single point to any one of the sequences.

We are left with proving that $2.$ and $3.$ are equivalent. 
Obviously if $\C$ contains a class $E(\pi, \mathbf{i}, \overline{\X})$ 
then $3.$ fails. So suppose that $\C$ contains none of these (finitely many) classes.

This implies the existence of a positive integer $m$ such that for every $\mathbf{i}$
and every sequence $\Theta$ of the same length as $\mathbf{i}$ consisting of monotone (either increasing or decreasing) permutations of size $m$,
the permutation $E(\pi, \mathbf{i}, \Theta)$ does not belong to $\C$.
Let $C$ be the total number of non-trivial cycles on $[k+1]$ and take $M = C(k+1)((m-1)^2 + 1)$. 
We claim that if $\sigma \in \C$ contains a stable occurrence $\mathbf{s}$ of $\pi$ then 
the sum of the non-diagonal entries of $A(\sigma, \mathbf{s})$ is bounded above by $M$. 
Suppose this were not the case, and choose a counterexample $(\sigma, \mathbf{s})$. 
Using Lemma \ref{Lemma:Matching marginals} write $A(\sigma, \mathbf{s})$ as a non-negative linear combination of $A_{\mathbf{i}}$. 
In this decomposition of $A(\sigma, \mathbf{s})$, at most $C$ non-trivial $A_{\mathbf{i}}$ occur. 
Moreover, the number of (necessarily non-diagonal) entries of each such $A_{\mathbf{i}}$ is at most $k+1$. 
Since the sum of the non-diagonal entries of $A(\sigma, \mathbf{s})$ is greater than $M$,
this implies that there is some non-trivial cycle $\mathbf{i}$ such that the coefficient of $A_{\mathbf{i}}$ is at least $(m-1)^2 + 1$. 
In other words $\sigma$ contains a subpermutation which is an inflation of $\pi$ on $\mathbf{i}$ by permutations of size at least $(m-1)^2 + 1$. 
Again from the the Erd\H{o}s-Szekeres theorem, each of these permutations contains a monotone subsequence of length $m$, 
so $\sigma$ contains a permutation $E(\pi, \mathbf{i}, \Theta)$ where $\Theta$ is a sequence of monotone permutations each of length $m$ -- 
and that contradicts the choice of $m$.
\end{proof}

\subsubsection{From stable subpermutations to cycles}

Deducing the announced \cref{thm:ExpressCycles} from \cref{thm:ExpressStablePerm} is easy. 
Fix a permutation class \C and an integer $k$. 

Assume first that the second statement of \cref{thm:ExpressCycles} holds. 
This means that the second statement of \cref{thm:ExpressStablePerm} holds for any $k$-cycle $\pi$. 
For each such $\pi$, \cref{thm:ExpressStablePerm} ensures the existence of a formula 
$\phi_\pi(\mathbf{x})$ of \TOTOC with free variables $\mathbf{x} = (x_1, \dots, x_k)$ 
that expresses that $k$ distinguished elements of a permutation $\sigma$ form a stable occurrence of $\pi$. 
A formula $\phi(\mathbf{x})$ expressing that $k$ distinguished elements of a permutation $\sigma$ form a cycle 
is simply obtained as the disjunction of the $\phi_\pi(\mathbf{x})$ over all $k$-cycles $\pi$, 
proving that the first statement in \cref{thm:ExpressCycles} holds. 

Conversely, assume that the first statement \cref{thm:ExpressCycles} holds, 
that is to say that there exists a formula 
$\phi(\mathbf{x})$ of \TOTOC expressing that $k$ distinguished elements of a permutation $\sigma$ form a cycle. 
Fix a $k$-cycle $\pi$. It follows from Section~\ref{sec:patterns} that 
there exists a formula $\psi_\pi(\mathbf{x})$ expressing that 
$k$ distinguished elements of a permutation $\sigma$ form an occurrence of the pattern $\pi$. 
Therefore, that $k$ distinguished elements of a permutation $\sigma$ form a \emph{stable} occurrence of the pattern $\pi$ 
is simply expressed by $\psi_\pi(\mathbf{x}) \wedge \phi(\mathbf{x})$. 
From \cref{thm:ExpressStablePerm}, we deduce immediately the second statement of \cref{thm:ExpressCycles}. 

\medskip

We note that the above proof extends verbatim to yield the following statement. 

\begin{theorem}
Let $\C$ be a permutation class, and $k$ be an integer. The following are equivalent.
\begin{enumerate}
\item There is a formula $\phi(\mathbf{x})$ of \TOTOC with free variables $\mathbf{x} = (x_1, \dots, x_k)$ 
such that for all $\sigma \in \C$ and all sequences $\mathbf{a} = (a_1, \dots, a_k)$ of elements of $\sigma$, 
$(\sigma,\mathbf{a}) \models \phi(\mathbf{x})$ if and only if $\mathbf{a}$ is a union of cycles of $\sigma$.
\item For each permutation $\pi$ of size $k$, for each non-trivial cycle $\mathbf{i}$ of distinct elements from $[k+1]$, 
and for each sequence $\overline{\X}$ of the same length as $\mathbf{i}$, consisting of $\I$s and $\D$s, 
the class $\C$ avoids at least one permutation in each class of the form $E(\pi, \mathbf{i}, \overline{\X})$.
\end{enumerate}
\end{theorem}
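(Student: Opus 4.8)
The plan is to mirror, essentially verbatim, the deduction of \cref{thm:ExpressCycles} from \cref{thm:ExpressStablePerm}, with one modification: wherever that argument quantifies over $k$-cycles $\pi$, I would instead quantify over \emph{all} permutations $\pi$ of size $k$. The observation that makes this work is that a sequence $\mathbf{a} = (a_1, \dots, a_k)$ of $k$ distinct elements is a union of cycles of $\sigma$ precisely when the set $\{a_1, \dots, a_k\}$ is stable under $\sigma$ (i.e.\ $\sigma$ maps it to itself), which by definition happens if and only if $\mathbf{a}$ is a stable occurrence of the pattern that $\sigma$ induces on it. Since this induced pattern may be an arbitrary element of $\S_k$ rather than a single $k$-cycle, ``$\mathbf{a}$ is a union of cycles of $\sigma$'' is equivalent to ``$\mathbf{a}$ is a stable occurrence of $\pi$ for some permutation $\pi$ of size $k$''. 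This is the only place where the present statement departs from \cref{thm:ExpressCycles}, where one restricts to $k$-cycles.

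For the implication $2. \Rightarrow 1.$, I would proceed as follows. Statement $2.$ asserts that the second condition of \cref{thm:ExpressStablePerm} holds for every permutation $\pi$ of size $k$. For each such $\pi$, \cref{thm:ExpressStablePerm} then yields a formula $\phi_\pi(\mathbf{x}) \in \TOTOC$ expressing that $\mathbf{a}$ is a stable occurrence of $\pi$. Taking the disjunction $\bigvee_\pi \phi_\pi(\mathbf{x})$ over the finitely many permutations $\pi$ of size $k$ produces a \TOTOC formula that holds exactly when $\mathbf{a}$ is a stable occurrence of some size-$k$ permutation, i.e.\ exactly when $\mathbf{a}$ is a union of cycles of $\sigma$, establishing $1.$

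For the converse $1. \Rightarrow 2.$, I would start from a formula $\phi(\mathbf{x}) \in \TOTOC$ expressing that $\mathbf{a}$ is a union of cycles, and fix an arbitrary permutation $\pi$ of size $k$. By the expressibility of classical patterns (\cref{prop:classical_pattern} in Section~\ref{sec:patterns}), there is a formula $\psi_\pi(\mathbf{x})$ expressing that $\mathbf{a}$ forms an occurrence of $\pi$. The conjunction $\psi_\pi(\mathbf{x}) \wedge \phi(\mathbf{x})$ then expresses that $\mathbf{a}$ is an occurrence of $\pi$ whose underlying set is $\sigma$-invariant, that is, a stable occurrence of $\pi$. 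Hence the first condition of \cref{thm:ExpressStablePerm} holds for $\pi$, and therefore so does its equivalent second condition; since $\pi$ was an arbitrary size-$k$ permutation, this is precisely statement $2.$

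I do not expect a genuine obstacle here, as each step transcribes the corresponding step in the proof of \cref{thm:ExpressCycles}. The only point meriting attention is the initial reduction: one must verify that $\sigma$-invariance of the underlying set (the defining property of a union of cycles) coincides with the defining property of a stable occurrence, with the induced pattern now ranging over all of $\S_k$ rather than over the $k$-cycles alone. Once this is noted, both directions follow immediately from \cref{thm:ExpressStablePerm} together with the expressibility of classical patterns.
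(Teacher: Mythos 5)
Your proposal is correct and matches the paper exactly: the paper proves this statement by noting that the deduction of \cref{thm:ExpressCycles} from \cref{thm:ExpressStablePerm} ``extends verbatim,'' replacing the disjunction/quantification over $k$-cycles by one over all permutations $\pi$ of size $k$, which is precisely what you do. Your explicit justification that ``union of cycles'' means $\sigma$-invariance of the underlying set, hence ``stable occurrence of some $\pi \in \S_k$,'' is the (implicit) key observation in the paper as well.
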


\section{Intersection of \TOTO and \TOOB}
\label{Sect:Intersection}

% What is commented below has been inserted earlier, in a different form. 
% Kept in case we want to re-use the phrasing here rather than the current one in the introductory sections. 
%
\subsection{Results of this section}

The goal of this section is to characterize completely the properties of permutations which can be expressed both in \TOTO and in \TOOB. 
In other words, we want to identify the subsets of $\S = \cup_n \S_n$ for which 
there exist a sentence $\phi_{\TOTO}$ of \TOTO and a sentence  $\phi_{\TOOB}$ of \TOOB 
whose models are exactly the permutations in this set.

We start by introducing some terminology. 

\begin{definition}
\label{dfn:bi-fo}
A {\em \inter set} is a set $E$ of permutations such that 
there exist a sentence $\phi_{\TOTO}$ of \TOTO and a sentence  $\phi_{\TOOB}$ of \TOOB 
whose models are exactly the permutations in $E$.
\end{definition}

Recall that the  \emph{support} of a permutation is the set of its non-fixed points.
Our first result is the following. 
\begin{theorem}
Any \inter set $E$ either contains all permutations
with sufficiently large support,
or there is a bound on the size of the support of permutations in $E$.
\label{Thm:Intersection_True_Or_False}
\end{theorem}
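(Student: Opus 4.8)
The plan is to play the two logics against each other. Since $E$ is expressible in \TOOB, \cref{prop:sameTOOB_conjugatePerm} shows that whether a permutation lies in $E$ depends only on its cycle-type, so $E$ is a union of conjugacy classes. Since $E$ is expressible in \TOTO, I fix a defining sentence of quantifier depth $k$; by the Ehrenfeucht--Fra\"iss\'e theorem $E$ is then a union of $\EFeq_k$-classes. Writing $T=2^k-1$, I combine these into one working principle: if $\sigma\EFeq_k\tau$, or if $\sigma$ and $\tau$ are conjugate, then $\sigma\in E\iff\tau\in E$; moreover, applying \cref{Prop:MagicLemma} to the base $12$ (so that $\rho\oplus\eta=12[\rho,\eta]$ and cycle-type is additive under $\oplus$) shows that $\EFeq_k$-replacement of one block leaves the direct sum $\EFeq_k$-equivalent. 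Hence I may perform a chain of ``moves'' on the cycle-type of a permutation --- replacing a block by an $\EFeq_k$-equivalent word, or conjugating --- while preserving membership in $E$, and these moves are compatible with disjoint union of cycle-types.

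The core of the argument is then a collapse statement: there is an $S_0$ depending only on $k$ such that every cycle-type of support at least $S_0$ can be transformed, by such moves, into the type $(2^N)$ of $N$ two-cycles, for $N$ as large as desired. Granting this, the theorem is immediate: all $(2^N)$ with $N\ge T$ are mutually reachable and so form a single class; if $E$ contains one permutation of support $\ge S_0$ it therefore contains that whole class and hence every permutation of support $\ge S_0$, whereas if it contains no such permutation its supports are bounded by $S_0$.

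To prove the collapse I would build a toolkit of explicit transformations, each justified by the monotone equivalences of \cref{Prop:monotone_equivalent} ($\inc_a\EFeq_k\inc_{a'}$ and $\delta_b\EFeq_k\delta_{b'}$ for parameters $\ge T$), by \cref{Prop:MagicLemma}, by the (unlabelled) same-inflation proposition, and by direct cycle-type computations. The key facts are: the inflation $\inc_a\ominus\inc_b=21[\inc_a,\inc_b]$ realises the rotation of $\mathbb Z_{a+b}$ by $b$, of cycle-type $((a+b)/g)^{g}$ with $g=\gcd(a,b)$, which lets me interconvert (at multiplicities $\ge T$) large pools $(j^d)$ of cycles of any fixed length and relate a pool $(2^m)$ to a single long cycle $(2m+1)$; a lone long cycle has freely adjustable length via $21[\inc_{j-1},1]$; the identity $\delta_{2p+1}\EFeq_k\delta_{2p'}$ converts the type $(2^p,1)$ into $(2^{p'})$, absorbing a fixed point into a two-cycle pool; and --- the delicate point --- inflating a base $j$-cycle by $\inc_s$ at all but one of its points and by $\delta_s$ at the remaining point yields, for $s$ odd, the type $((2j)^{(s-1)/2},j)$ and, for $s$ even, the type $((2j)^{s/2})$, so that a single short $j$-cycle can be absorbed into a pool of $(2j)$-cycles. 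Using Erd\H{o}s--Szekeres as in \cref{rk:fixed_point_expressible_in_classes} to replace arbitrary large inflating blocks by monotone ones, and a pigeonhole observation (a type with support of order $T^3$ must have either a cycle of length $>T$ or some fixed length of multiplicity $\ge T$), these moves reduce any large-support type to a pool of two-cycles.

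The main obstacle is exactly that last family of computations. High-multiplicity parts are disposed of by the same-inflation scaling, but a short cycle of \emph{low} multiplicity --- a single $3$-cycle, say --- is invisible to that argument, and removing it requires a genuinely mixed inflation whose cycle-type carries precisely one extra $j$-cycle in the odd case and none in the even case. Verifying these cycle-types, and then organising the reduction so that the auxiliary $(2j)$-pools needed to apply the absorption move are always present (kept large via the multiplicity moves) while cycles are removed one at a time, is the technical bookkeeping that must be handled with care; everything else follows the pattern already established for fixed points in \cref{thm:fixed_point_expressible_in_classes}.
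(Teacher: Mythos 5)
Your proposal is correct in its overall architecture and that architecture coincides with the paper's: use \TOOB-definability and \cref{prop:sameTOOB_conjugatePerm} to reduce membership in $E$ to a property of cycle-types, use the quantifier depth $\ell$ of a defining \TOTO sentence to get EF-invariance, make the two compatible with disjoint union of cycle-types via inflation of $12$ (this is exactly the paper's \cref{cor:magic}), and then exhibit explicit monotone inflations realizing ``moves'' on cycle-types until every large-support type collapses to a single equivalence class, finishing with a pigeonhole on the support size. Several of your moves are literally the paper's: the adjustable lone long cycle is \cref{Lem:N1_N2}, and your rotation $21[\inc_a,\inc_b]$ of type $((a+b)/\gcd(a,b))^{\gcd(a,b)}$ specializes (with $a=(k-1)m$, $b=m$) to the paper's \cref{Lem:KM} relating $(k^m)$ and $(km+1)$. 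Where you genuinely diverge is in the absorption mechanism and the normal form. The paper's key lemma (\cref{Lem:LargeCycleEatsEverything}) shows via a $4321[\inc,1,\inc,1]$ inflation and the $\grow_h$ operator that a single long cycle absorbs an arbitrary cycle in one step, $(n-2,k)\mapsto(n+k-1)$, so its normal form is one long cycle $\zeta$ and Claim~1 of its proof just eats the cycles of $\la$ one by one. You instead absorb a low-multiplicity $j$-cycle into a pool of $(2j)$-cycles by inflating a $j$-cycle by $\inc_s$ at all points but one and $\delta_s$ at the last, with normal form a pool $(2^N)$; I checked your parity-dependent cycle-type computation for this mixed inflation (taking the base cycle $23\cdots j1$ and tracing the orbit $t\mapsto s+t\mapsto\cdots\mapsto s+1-t\mapsto\cdots$) and it is correct, the $j=1$ case recovering the $\delta_{2p+1}\EFeq\delta_{2p'}$ fixed-point absorption. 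The paper's route buys a shorter endgame, since one lemma handles all cycle lengths uniformly and no auxiliary pools need to be maintained; yours buys a more transparent $\gcd$-based toolkit but at the cost of the bookkeeping you acknowledge (manufacturing a $(2j)$-pool of size roughly $2^{\ell-1}$ from the long cycle before each absorption, then converting back). Two small points: the appeal to Erd\H{o}s--Szekeres is a red herring here --- since conjugacy-invariance lets you \emph{choose} the representative of each cycle-type, you may simply build it from monotone blocks, as the paper does --- and your pigeonhole bound of order $T^3$ is coarser than, but just as serviceable as, the paper's $M=\sum_{k}k\cdot 2^\ell/(k-1)$.
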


Informally, this says that a property which can be described by a sentence 
in each of the two theories is, in some sense, trivial. 
Namely, it is either {\em eventually} true or {\em eventually} false,
where {\em eventually} means ``for all permutations with sufficiently large support''.
Note, however, that a property that can be expressed in both theories may be true for some, but not all, permutations with arbitrary large {\em size}. 
This is for instance demonstrated by the set $E=\{12\cdots n, n \ge 1\}$ (which have empty support), 
which is the set of models of the sentences 
$\phi_{\TOTO}= \forall x \, \forall y \ (x<_P y \leftrightarrow x <_V y)$
and $\phi_{\TOOB}=\forall x \, xRx$ 
of \TOTO and \TOOB respectively. 

There is a more precise version of the theorem,
which characterizes completely \inter sets $E$.
To state it, we introduce some notation.
For a partition $\la$, we denote by $\CCC_\la$
the set of permutations of cycle-type $\la$.
We also denote 
\begin{equation}
  \DDD_\la= \biguplus_{k \ge 0} \CCC_{\la \cup (1^k)}.
  \label{Eq:Def_DLa}
\end{equation}
\revision{(See the end of the introduction for the notation on 
integer partitions.)}
Finally, we recall that the \emph{Boolean algebra} generated 
by a family $\bm{\mathcal{F}}$ of subsets of $\Omega$
is the smallest collection of subsets of $\Omega$ containing $\bm{\mathcal{F}}$ and stable
by finite unions, finite intersections and taking complements.
Here, the role of $\Omega$ is played by the set $\SSS$ of all permutations (of all sizes).
\begin{theorem}
  A set $E$ of permutations is a \inter set
   if and only if
  it belongs to the Boolean algebra generated by all $\CCC_\la$ and $\DDD_\la$
  (where $\la$ runs over all partitions).
  \label{Thm:IntersectionBoolean}
\end{theorem}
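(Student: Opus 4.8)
The plan is to prove the two inclusions separately. For the easy inclusion, that every member of the Boolean algebra $\mathcal B$ generated by the $\CCC_\la$ and $\DDD_\la$ is a \inter set, I would first note that \inter sets are closed under complementation, finite union and finite intersection: given defining pairs $(\phi_{\TOTO},\phi_{\TOOB})$ for $E$ and $E'$, the corresponding Boolean combinations of the \TOTO-sentences and, separately, of the \TOOB-sentences define the Boolean combination of $E$ and $E'$. Thus it suffices to check the generators. On the \TOTO side this is precisely \cref{prop:CLa_BiFO_TOTO} for $\CCC_\la$ and \cref{prop:DLa_BiFO_TOTO} for $\DDD_\la$. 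On the \TOOB side, $\CCC_\la$ is a single conjugacy class, hence by \cref{prop:sameTOOB_conjugatePerm} a single isomorphism class of finite \TOOB-models, which is defined by one \TOOB-sentence; and $\DDD_\la$ is defined by a \TOOB-sentence asserting the existence of $|\la|$ elements carrying cycles of type $\la$, conjoined with the universal clause that every remaining element is a fixed point.

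For the converse inclusion, let $E$ be a \inter set with defining sentences $\phi_{\TOTO}$, of quantifier depth $p$, and $\phi_{\TOOB}$. Since conjugate permutations yield isomorphic \TOOB-models (\cref{prop:sameTOOB_conjugatePerm}), satisfaction of $\phi_{\TOOB}$ depends only on the cycle-type, so $E$ is a union of conjugacy classes. I record this by an indicator $f(\nu,k)\in\{0,1\}$, where $\nu$ ranges over partitions with no part equal to $1$ and $k\ge 0$, setting $f(\nu,k)=1$ exactly when $\CCC_{\nu\cup(1^k)}\subseteq E$. The support size of a permutation of cycle-type $\nu\cup(1^k)$ equals $|\nu|$ and is independent of $k$. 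By \cref{Thm:Intersection_True_Or_False}, either $E$ or its complement (again a \inter set) has bounded support; passing to the complement if necessary, I may therefore assume there is a bound $M$ with $f(\nu,k)=0$ whenever $|\nu|>M$, so that only the finitely many $\nu$ with $|\nu|\le M$ are relevant.

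The hard part will be to show that, for each fixed $\nu$, the set $K_\nu:=\{k:f(\nu,k)=1\}$ is finite or cofinite. Fix a permutation $\pi$ of cycle-type $\nu$ (so $\pi$ has no fixed point), and for $p_1,p_2\ge 0$ set $\sigma_{p_1,p_2}=\inc_{p_1}\oplus\pi\oplus\inc_{p_2}=\inc_3[\inc_{p_1},\pi,\inc_{p_2}]$. Being block-diagonal, its cycles are those of $\pi$ together with $p_1+p_2$ fixed points, so $\sigma_{p_1,p_2}$ has cycle-type $\nu\cup(1^{p_1+p_2})$. Now \cref{Prop:MagicLemma}, applied with $\alpha=\inc_3$, together with \cref{Prop:monotone_equivalent}, gives $\sigma_{p_1,p_2}\EFeq_p\sigma_{p_1',p_2'}$ whenever $p_1,p_2,p_1',p_2'\ge 2^p-1$. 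Since $\phi_{\TOTO}$ has quantifier depth $p$, the fundamental theorem of Ehrenfeucht and Fra\"iss\'e shows that these two permutations agree on membership in $E$; as $E$ is a union of conjugacy classes, this yields $f(\nu,k)=f(\nu,k')$ for all $k,k'\ge 2(2^p-1)$, so $f(\nu,\cdot)$ is eventually constant and $K_\nu$ is finite or cofinite.

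Finally I would reassemble. After the reduction above, $E=\biguplus_{|\nu|\le M}\biguplus_{k\in K_\nu}\CCC_{\nu\cup(1^k)}$ is a finite union over the finitely many relevant $\nu$. A finite $K_\nu$ contributes a finite union of $\CCC_\la$'s; a cofinite $K_\nu$ containing all $k\ge m$ contributes $\DDD_{\nu\cup(1^m)}=\biguplus_{k\ge m}\CCC_{\nu\cup(1^k)}$ corrected by a finite Boolean combination of $\CCC_\la$'s. Hence $E\in\mathcal B$, and undoing the possible complementation keeps $E$ in $\mathcal B$. I expect the Key Step of the third paragraph to be the main obstacle: one must verify that the direct-sum construction realizes exactly the conjugacy class $\CCC_{\nu\cup(1^k)}$ (the $\inc$-blocks contributing precisely the intended fixed points while leaving the cycle-type of $\pi$ intact), and one must route the reduction to bounded support through the complement, since $\bigcup_{|\nu|>M}\DDD_\nu$ is an infinite union and enters $\mathcal B$ only as the complement of the finite union $\bigcup_{|\nu|\le M}\DDD_\nu$.
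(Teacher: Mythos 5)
Your proposal is correct and follows essentially the same route as the paper: the easy inclusion via closure of \inter sets under Boolean operations plus \cref{prop:CLa_BiFO_TOTO}, \cref{prop:DLa_BiFO_TOTO} and \cref{prop:sameTOOB_conjugatePerm}, then the converse by reducing (via \cref{Thm:Intersection_True_Or_False} and passing to the complement) to bounded support and showing each $I_\mu$ is finite or cofinite through an Ehrenfeucht--Fra\"iss\'e argument built on \cref{Prop:MagicLemma} and \cref{Prop:monotone_equivalent}. Your direct construction $\inc_{p_1}\oplus\pi\oplus\inc_{p_2}$ is just an inlined version of the paper's \cref{cor:magic} applied to the identity permutations $\inc_k\EFeq_\ell\inc_{k'}$, differing only in an inessential constant.
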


\cref{Thm:Intersection_True_Or_False,Thm:IntersectionBoolean} are
proved in \cref{Sect:Proof_True_Or_False,Sect:Proof_Boolean}, respectively.

\begin{remark}
  We observe that \cref{Thm:IntersectionBoolean} is indeed more precise than \cref{Thm:Intersection_True_Or_False},
  in the sense that it is rather easy to deduce \cref{Thm:Intersection_True_Or_False} from \cref{Thm:IntersectionBoolean}.
  Indeed, for any $\la$, the sets $\CCC_\la$ and $\DDD_\la$ clearly have                                                          
  a bound, namely $|\la|$, on the size of the supports of permutations they contain.                                 
  Moreover, the property                                                                                                   
  \begin{quote}                                                                                                           
  ``Either $E$ contains all permutations                                                                                    
  with sufficiently large support,                                                                                          
  or there is a bound on the size of the support of permutations in $E$.''
  \end{quote}          
  is stable by taking finite unions, finite intersections and complements.
  Therefore, all sets in the Boolean algebra generated by the $\CCC_\la$'s and $\DDD_\la$'s
  satisfy this property and \cref{Thm:IntersectionBoolean} implies \cref{Thm:Intersection_True_Or_False},
  as claimed.
\end{remark}

\subsection{\inter sets are trivial}
\label{Sect:Proof_True_Or_False}

The goal of this section is to prove \cref{Thm:Intersection_True_Or_False}.
We consider a \inter set $E$. 
On one hand, $E$ is the set of models of
a sentence of \TOTO, whose quantifier depth is denoted $\ell$.
On the other hand, $E$ is the set of models of                           
a sentence in \TOOB. 
In particular, from \cref{prop:sameTOOB_conjugatePerm},
for any given partition $\la$,
it contains either all or none of the permutations of cycle-type $\la$.
To keep that in mind, everywhere in this section,
we write ``$E$ contains one/all permutation(s) of type $\la$''.

The general strategy of the proof is the following:
we identify a number of pairs of permutations $(\si_1,\si_2)$, 
on which Duplicator wins the $\ell$-move Duplicator-Spoiler game. 
We call such permutations $\si_1,\si_2$ \emph{$\ell$-indistinguishable} (or just \emph{indistinguishable}, $\ell$ being fixed throughout the proof).
For such a pair, $\si_1$ is in $E$ if and only if $\si_2$ is in $E$.
As a consequence, if $E$ contains one/all permutation(s) conjugate to $\si_1$,
it contains one/all permutation(s) conjugate to $\si_2$.

We can say even more:
\begin{lemma}
\label{cor:magic}
If there exist two permutations of cycle-types $\mu$ and $\nu$ that are indistinguishable, 
then for any fixed partition $\lambda$ 
there exists two permutations of cycle-types $\lambda \cup \mu$ and $\lambda \cup \nu$ that are indistinguishable.
\end{lemma}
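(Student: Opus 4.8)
The plan is to reduce the statement to a single application of \cref{Prop:MagicLemma}. Fix two indistinguishable permutations $\sigma_\mu$ and $\sigma_\nu$, of respective cycle-types $\mu$ and $\nu$; by definition this means $\sigma_\mu \EFeq_\ell \sigma_\nu$, where $\ell$ is the quantifier depth fixed throughout the proof. Fix also an arbitrary permutation $\rho$ of cycle-type $\lambda$ (such a permutation exists for every partition $\lambda$). The idea is to prepend a common $\lambda$-block to both permutations via a direct sum, and to observe that this operation simultaneously adds $\lambda$ to the cycle-type and preserves $\ell$-indistinguishability.

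Concretely, I would consider the two permutations $\rho \oplus \sigma_\mu = \inc_2[\rho, \sigma_\mu]$ and $\rho \oplus \sigma_\nu = \inc_2[\rho, \sigma_\nu]$. First I would check their cycle-types: since the direct sum of two permutations juxtaposes their cycle decompositions on disjoint supports, $\rho \oplus \sigma_\mu$ has cycle-type $\lambda \cup \mu$ and $\rho \oplus \sigma_\nu$ has cycle-type $\lambda \cup \nu$, exactly as required. Then I would establish indistinguishability. Since $\EFeq_\ell$ is an equivalence relation, we have $\rho \EFeq_\ell \rho$ trivially (Duplicator wins by copying Spoiler's moves). Applying \cref{Prop:MagicLemma} with $\alpha = \inc_2 \in \S_2$, inflating the first point by the pair $(\rho,\rho)$ and the second point by the pair $(\sigma_\mu, \sigma_\nu)$, we obtain
\[
\inc_2[\rho, \sigma_\mu] \EFeq_\ell \inc_2[\rho, \sigma_\nu],
\]
that is, $\rho \oplus \sigma_\mu$ and $\rho \oplus \sigma_\nu$ are indistinguishable, which is precisely the conclusion of the lemma.

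There is no genuine obstacle here: the only point requiring (minor) care is to identify the correct combinatorial operation that realizes ``adding $\lambda$ to the cycle-type'' while staying within the scope of \cref{Prop:MagicLemma}. The direct sum does exactly this, because cycle-type is additive as a multiset under $\oplus$, and because $\oplus$ is precisely an inflation of the increasing pattern $\inc_2$, so that the Magic Lemma applies verbatim with one of the two inflating blocks held fixed and equal on both sides.
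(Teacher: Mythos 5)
Your proof is correct and is essentially identical to the paper's: the authors also take a permutation $\sigma$ of cycle-type $\lambda$, form $12[\sigma,\pi_1]$ and $12[\sigma,\pi_2]$ (i.e.\ the direct sums), and invoke \cref{Prop:MagicLemma} together with the additivity of cycle-type under $\oplus$. Nothing to change.
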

\begin{proof}
  Let $\sigma$ be an arbitrarily chosen permutation of cycle-type $\la$
  and $\pi_1$ and $\pi_2$ be two permutations of cycle-types $\mu$ and $\nu$ that are indistinguishable.
  We consider $12[\sigma, \pi_1]$ and $12[\sigma, \pi_2]$. 
  From \cref{Prop:MagicLemma}, they are indistinguishable 
  and have cycle-type $\lambda \cup \mu$ and $\lambda \cup \nu$, respectively.
%   This proves the corollary.
\end{proof}

In particular, under the assumption of the lemma,
our chosen \inter set $E$ contains one/all permutation(s) of cycle-type $\lambda \cup \mu$
if and only if it contains one/all permutation(s) of cycle-type $\lambda \cup \nu$.
\medskip

In the following few lemmas, we exhibit some pairs of indistinguishable
permutations with specific cycle-type, to which we will apply \cref{cor:magic}.
This serves as preparation for the proof of \cref{Thm:Intersection_True_Or_False}.

We denote $\ct(\si)$ the cycle-type of a permutation $\si$.

\begin{lemma}
  Let $n_1,n_2 \ge 2^\ell$.
  There exist indistinguishable permutations $\si_1$ and $\si_2$ 
  with $\ct(\si_1)=(n_1)$ and $\ct(\si_2)=(n_2)$.
  \label{Lem:N1_N2}
\end{lemma}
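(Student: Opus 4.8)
The plan is to realize each prescribed cycle length by a single long cycle whose diagram is an increasing sequence up to a bounded inflation, so that indistinguishability reduces to the monotone case (\cref{Prop:monotone_equivalent}) through the inflation principle (\cref{Prop:MagicLemma}). Concretely, I would take
\[
\sigma_1 = \inc_{n_1-1} \ominus 1 = 21[\inc_{n_1-1},1], \qquad
\sigma_2 = \inc_{n_2-1} \ominus 1 = 21[\inc_{n_2-1},1].
\]

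First I would check that these permutations have the right cycle-type. In one-line notation, $\inc_{n-1} \ominus 1$ is the word $23\cdots n\,1$, that is the permutation sending $i \mapsto i+1$ for $i<n$ and $n \mapsto 1$. Following the orbit $1 \to 2 \to \cdots \to n \to 1$, this is precisely the $n$-cycle $(1\,2\,\cdots\,n)$. Hence $\ct(\sigma_1)=(n_1)$ and $\ct(\sigma_2)=(n_2)$, as required by the statement.

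It then remains to see that $\sigma_1$ and $\sigma_2$ are indistinguishable, i.e. $\sigma_1 \EFeq_\ell \sigma_2$. Since both are inflations of the pattern $21 \in \S_2$, I would invoke \cref{Prop:MagicLemma} with $\alpha = 21$: it suffices to verify that, point by point of $21$, the two inflating permutations are $\EFeq_\ell$-equivalent. On the lower point both sides are inflated by the single-element permutation $1$, and $1 \EFeq_\ell 1$ trivially. On the upper point the inflating permutations are $\inc_{n_1-1}$ and $\inc_{n_2-1}$, and \cref{Prop:monotone_equivalent} yields $\inc_{n_1-1} \EFeq_\ell \inc_{n_2-1}$ as soon as $n_1-1,\,n_2-1 \ge 2^\ell-1$. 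This is exactly the hypothesis $n_1,n_2 \ge 2^\ell$, so both requirements of \cref{Prop:MagicLemma} are met and we conclude $\sigma_1 \EFeq_\ell \sigma_2$.

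The only real difficulty is conceptual rather than computational: one must exhibit a single cycle of arbitrary length whose diagram is monotone up to a bounded perturbation, so that the whole question collapses onto the monotone equivalence of \cref{Prop:monotone_equivalent}. Here this is painless because the standard $n$-cycle $12\cdots n$ already has an almost-increasing diagram with a single descent, which makes the inflation $21[\inc_{n-1},1]$ a completely natural choice. I expect this construction to serve as the template for the subsequent lemmas producing indistinguishable pairs of more elaborate cycle-types, where the single monotone block is replaced by several monotone blocks arranged according to a fixed pattern.
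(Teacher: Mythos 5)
Your proof is correct and follows exactly the paper's own argument: the same permutations $21[\inc_{n_1-1},1]$ and $21[\inc_{n_2-1},1]$, with indistinguishability deduced from \cref{Prop:monotone_equivalent} applied to the inflating blocks and then \cref{Prop:MagicLemma}. The bookkeeping $n_i-1\ge 2^\ell-1$ matching the hypothesis $n_i\ge 2^\ell$ is also handled correctly.
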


\begin{proof}
  Consider the permutations $\si_1=2\, 3\, \dots\, n_1\, 1$ 
  and $\si_2=2\, 3\, \dots\, n_2\, 1$. 
  They are clearly cyclic permutations of size $n_1$ and $n_2$, respectively.
  Moreover, they write as $\si_1=21[\inc_{n_1-1},1]$ and $\si_2=21[\inc_{n_2-1},1]$. 
  Of course, $\inc_{n_1-1}$ and $\inc_{n_2-1}$ are indistinguishable (see \cref{Prop:monotone_equivalent}), and then  
  \cref{Prop:MagicLemma} ensures that $\si_1$ and $\si_2$ are indistinguishable.
\end{proof}

With \cref{cor:magic}, a consequence of Lemma~\ref{Lem:N1_N2} for our \inter set $E$ is: 

\begin{corollary}
  Let $\la$ be a partition and $n_1,n_2 \ge 2^\ell$.
  Then $E$ contains one/all permutation(s) of cycle-type $\la \cup (n_1)$,
  if and only if it contains one/all permutation(s) of cycle-type $\la \cup (n_2)$.
  \label{Corol:N1_N2}
\end{corollary}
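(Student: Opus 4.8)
The plan is to deduce this directly by feeding \cref{Lem:N1_N2} into \cref{cor:magic}. First I would apply \cref{Lem:N1_N2} to the given integers $n_1,n_2 \ge 2^\ell$, producing indistinguishable permutations of respective cycle-types $(n_1)$ and $(n_2)$. This is precisely the hypothesis of \cref{cor:magic} with $\mu=(n_1)$ and $\nu=(n_2)$, so applying that lemma with the partition $\la$ yields indistinguishable permutations $\tau_1,\tau_2$ of cycle-types $\la\cup(n_1)$ and $\la\cup(n_2)$ respectively.

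Next I would translate indistinguishability into membership in $E$. Since $E$ is the set of models of a \TOTO sentence of quantifier depth $\ell$ and $\tau_1 \EFeq_\ell \tau_2$, the fundamental theorem of Ehrenfeucht--Fra\"iss\'e gives that $\tau_1$ and $\tau_2$ satisfy the same \TOTO sentences of quantifier depth at most $\ell$; hence $\tau_1 \in E$ if and only if $\tau_2 \in E$. On the other hand, $E$ is also the set of models of a \TOOB sentence, so by \cref{prop:sameTOOB_conjugatePerm} it contains either all or none of the permutations of any fixed cycle-type. Chaining these equivalences, $E$ contains one (equivalently all) permutation(s) of cycle-type $\la\cup(n_1)$ if and only if it contains $\tau_1$, if and only if it contains $\tau_2$, if and only if it contains one (equivalently all) permutation(s) of cycle-type $\la\cup(n_2)$, which is the claimed statement.

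There is essentially no obstacle here: the corollary is a bookkeeping combination of the two preceding lemmas together with the two defining properties of a \inter set. The only point requiring minor care is to recall that indistinguishability is preserved by the inflation construction of \cref{cor:magic} (itself resting on \cref{Prop:MagicLemma}), so that the transfer from the cycle-types $(n_1),(n_2)$ to $\la\cup(n_1),\la\cup(n_2)$ is legitimate; everything else is a direct rereading of the remark following \cref{cor:magic} in the special case $\mu=(n_1)$, $\nu=(n_2)$.
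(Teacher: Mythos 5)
Your proposal is correct and follows exactly the route the paper intends: \cref{Lem:N1_N2} supplies the indistinguishable pair of cycle-types $(n_1)$ and $(n_2)$, \cref{cor:magic} transfers this to $\la\cup(n_1)$ and $\la\cup(n_2)$, and the remark following \cref{cor:magic} (resting on the Ehrenfeucht--Fra\"iss\'e theorem and \cref{prop:sameTOOB_conjugatePerm}) converts indistinguishability into the membership equivalence for $E$. The paper leaves this as an immediate consequence; your write-up just makes the same chain explicit.
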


We now play the same game with other pairs of permutations/cycle-types.
\begin{lemma}
  Let $k$ and $m$ be non-negative integers with $(k-1)\, m \ge 2^\ell$.
  There exist indistinguishable permutations $\si_1$ and $\si_2$ 
  with $\ct(\si_1)=(km+1)$ and $\ct(\si_2)=(k^m)$.
  \label{Lem:KM}
\end{lemma}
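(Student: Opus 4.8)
The plan is to mimic the construction used in the proof of \cref{Lem:N1_N2}: exhibit $\si_1$ and $\si_2$ explicitly as inflations of a common permutation by monotone blocks, so that \cref{Prop:monotone_equivalent} and \cref{Prop:MagicLemma} immediately yield indistinguishability, while a direct computation of the cycle structure produces the two prescribed cycle-types. Concretely, I would set $\si_1 = 21[\inc_{(k-1)m+1}, \inc_m]$ and $\si_2 = 21[\inc_{(k-1)m}, \inc_m]$. Both are inflations of the single permutation $21$ by a large increasing block followed by a copy of $\inc_m$, the only difference being that the large block has size $(k-1)m+1$ in $\si_1$ and $(k-1)m$ in $\si_2$.

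The indistinguishability is the easy half. Since $(k-1)m \ge 2^\ell$, both $(k-1)m$ and $(k-1)m+1$ exceed $2^\ell-1$, so \cref{Prop:monotone_equivalent} gives $\inc_{(k-1)m+1} \EFeq_\ell \inc_{(k-1)m}$, while trivially $\inc_m \EFeq_\ell \inc_m$. Applying \cref{Prop:MagicLemma} with outer permutation $21$ then yields $\si_1 \EFeq_\ell \si_2$, as required. I would stress that the block $\inc_m$ is left untouched: this is essential, since $m$ may be smaller than $2^\ell-1$ (the hypothesis only controls the product $(k-1)m$, not $m$ itself), so only the large block can be safely resized.

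The core of the argument is the cycle-type computation, and this is where I would be most careful. The key observation is that $21[\inc_a, \inc_b]$ is exactly the cyclic rotation $i \mapsto i+b \pmod{a+b}$ of $\{1, \dots, a+b\}$: its first $a$ positions carry the top values in increasing order and its last $b$ positions carry the bottom values in increasing order, which is precisely ``shift everything up by $b$, wrapping around''. With $(a,b)=((k-1)m, m)$ this shows $\si_2$ is rotation by $m$ on $\mathbb{Z}/km\mathbb{Z}$, whose cycles are the $\gcd(m, km)=m$ orbits each of length $k$, so $\ct(\si_2)=(k^m)$. With $(a,b)=((k-1)m+1, m)$ it shows $\si_1$ is rotation by $m$ on $\mathbb{Z}/(km+1)\mathbb{Z}$; since $km+1 \equiv 1 \pmod m$ we get $\gcd(m, km+1)=1$, so $\si_1$ is a single cycle and $\ct(\si_1)=(km+1)$.

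The main obstacle, and the real content, is spotting that one should keep the shift $m$ fixed and vary only the total length by one: this turns a clean divisibility ($m \mid km$, giving $m$ equal cycles) into coprimality ($\gcd(m, km+1)=1$, giving a single cycle), all while changing just one monotone block's size by $1$ so that \cref{Prop:MagicLemma} still applies. Once the rotation description of $21[\inc_a, \inc_b]$ is established, the remaining steps are the routine modular-arithmetic checks sketched above, and the hypothesis $(k-1)m \ge 2^\ell$ is used precisely to make the varied block large enough for \cref{Prop:monotone_equivalent}.
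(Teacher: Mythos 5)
Your proposal is correct and follows essentially the same route as the paper: the authors also take $\si_1 = 21[\inc_{(k-1)m+1},\inc_m]$ and $\si_2 = 21[\inc_{(k-1)m},\inc_m]$ and conclude indistinguishability from \cref{Prop:monotone_equivalent} and \cref{Prop:MagicLemma}. The only (cosmetic) difference is that you verify the cycle-types via the rotation-by-$m$ and $\gcd$ argument, whereas the paper writes out the cycle decompositions explicitly; both computations are equivalent.
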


\begin{proof}
  Consider the permutations $\si_1$ and $\si_2$ whose one-line representations are  
  \begin{align*}
    \si_1&= m\!+\!1\ \, m\!+\!2\ \, \cdots \ \, km\ \, km\!+\!1\ \, 1\ \, 2\ \, \cdots\ \, m;\\
    \si_2&= m\!+\!1\ \,m\!+\!2\ \, \cdots \ \, km\ \, 1\ \, 2\ \, \cdots\ \, m.
  \end{align*}
  Their decomposition into products of cycles of disjoint support are 
  \begin{align*}
    \si_1= \big(&1\ \,m\!+\!1\ \, 2m\!+\!1\ \, \cdots \ \, km\!+\!1\ \, m\ \, 2m\ \, \cdots\ \, km\ \, \\
    & m\!-\!1\ \, 2m\!-\!1\ \, \cdots \ \, km\!-\!1\ \, \cdots \ \, 2\ \,m\!+\!2\ \, \cdots \ \, (k-1)m\!+\!2 \big);\\
    \si_2= \big(&1\ \,m\!+\!1\ \, \cdots\ \, (k\!-\!1)m\!+\!1\big)\,
    \big(2\ \,m\!+\!2\ \, \cdots\ \, (k\!-\!1)m\!+\!2\big) \\
    & \cdots \ \big( m\ \,2m\ \, \cdots\ \, km\big),
  \end{align*}
  so that $\ct(\si_1)=(km+1)$ and $\ct(\si_2)=(k^m)$ as wanted.
  Moreover we can write $\si_1=21[\inc_{(k-1)m+1},\inc_m]$ and $\si_2=21[\inc_{(k-1)m},\inc_m]$,
  which proves using \cref{Prop:MagicLemma} that 
  $\si_1$ and $\si_2$ are indistinguishable.
\end{proof}

From Lemma~\ref{Lem:KM} and \cref{cor:magic}, we obtain: 

\begin{corollary}
  Let $\la$ be a partition and $k,m$ as above.
  Then $E$ contains one/all permutation(s) of cycle-type $\la \cup (km+1)$,
  if and only if it contains one/all permutation(s) of cycle-type $\la \cup (k^m)$.
  \label{Corol:KM}
\end{corollary}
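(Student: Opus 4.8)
The plan is to derive this statement as an immediate consequence of Lemma~\ref{Lem:KM} and Lemma~\ref{cor:magic}, exactly parallel to the derivation of Corollary~\ref{Corol:N1_N2} from Lemma~\ref{Lem:N1_N2}. First, since $(k-1)m \ge 2^\ell$ by hypothesis, Lemma~\ref{Lem:KM} supplies two indistinguishable permutations of cycle-types $(km+1)$ and $(k^m)$. Feeding these into Lemma~\ref{cor:magic} with $\mu = (km+1)$, $\nu = (k^m)$ and the given partition $\la$, I obtain two indistinguishable permutations of cycle-types $\la \cup (km+1)$ and $\la \cup (k^m)$.

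Next I would appeal to the principle recorded just after Lemma~\ref{cor:magic}. Because the two permutations just produced are indistinguishable, Duplicator wins the $\ell$-move Ehrenfeucht--Fra\"iss\'e game between them, where $\ell$ is the quantifier depth of the \TOTO sentence defining $E$. By the fundamental theorem of Ehrenfeucht and Fra\"iss\'e they therefore satisfy the same \TOTO sentences of quantifier depth at most $\ell$, so one lies in $E$ precisely when the other does.

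Finally I would use that $E$ is simultaneously the model set of a \TOOB sentence. By Proposition~\ref{prop:sameTOOB_conjugatePerm}, two models of \TOOB are isomorphic exactly when their associated permutations are conjugate, i.e.\ have the same cycle-type; hence $E$ contains either all or none of the permutations of any fixed cycle-type, which is what licenses the ``one/all'' phrasing. Combining this with the previous paragraph gives that $E$ contains one/all permutation(s) of cycle-type $\la \cup (km+1)$ if and only if it contains one/all permutation(s) of cycle-type $\la \cup (k^m)$, as claimed. There is no real obstacle here: the argument is a mechanical instantiation of Lemma~\ref{cor:magic} on the pair furnished by Lemma~\ref{Lem:KM}, and all the substantive work has already been done in those two lemmas.
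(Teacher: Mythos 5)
Your proposal is correct and is exactly the paper's argument: the paper derives \cref{Corol:KM} by feeding the indistinguishable pair of cycle-types $(km+1)$ and $(k^m)$ from \cref{Lem:KM} into \cref{cor:magic}, with the ``one/all'' phrasing justified by \cref{prop:sameTOOB_conjugatePerm} just as you describe. Nothing is missing.
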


\begin{lemma}
  Let $n \equiv 2\ (\!\!\!\!\mod 4)$ be an integer at least equal to $2(2^{\ell}+1)$ and let \hbox{$k \ge 1$}.
  There exist indistinguishable permutations $\si^k_1$ and $\si^k_2$ whose cycle-types are 
  $\ct(\si^k_1)=(n-2,k)$  and $\ct(\si^k_2)=(n+k-1)$.
  \label{Lem:LargeCycleEatsEverything}
\end{lemma}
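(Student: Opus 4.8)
The plan is to follow exactly the template of \cref{Lem:N1_N2,Lem:KM}: I will produce $\si^k_1$ and $\si^k_2$ as inflations of one and the same skeleton in which a single inflating block is changed, and then read off $\si^k_1 \EFeq_\ell \si^k_2$ from \cref{Prop:MagicLemma} together with the indistinguishability of the blocks that were changed. Note first that $\si^k_1$ and $\si^k_2$ necessarily have different sizes, since $(n-2)+k = (n+k-1)-1$; so the two blocks that differ will have sizes differing by one, and I will arrange for them to be \emph{long} (of size at least $2^\ell$, which is exactly where the hypothesis $n \ge 2(2^\ell+1)$ is used) so that their equivalence is guaranteed by an already-established result.

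The mechanism I would use to create a single long cycle on one side and a long cycle together with a separate $k$-cycle on the other is a single-point inflation of a short cyclic skeleton. Fix a cycle on $M$ points, with $M\in\{2,3\}$ a small constant chosen according to the parity of $k$, inflate one of its points by a long cyclic block $\theta_i$, and leave the remaining points as singletons. A direct trace shows that such an inflation has, as cycle-type, the cycle-type of the product $c\,\theta_i$ — where $c=(1\,2\,\cdots\,s)$ is the shift induced around the block of size $s=|\theta_i|$ — with the unique cycle meeting the inflated point lengthened by $M-1$. Thus the whole cycle-structure is governed by the cycle-type of a product of two $s$-cycles: choosing $\theta_2$ so that $c\,\theta_2$ is a single cycle produces the single long cycle of $\si^k_2$ of type $(n+k-1)$, while choosing $\theta_1$ (of size one smaller) so that $c\,\theta_1$ splits off a $k$-cycle produces $\ct(\si^k_1)=(n-2,k)$; the $k$-cycle of $\si^k_1$ is then literally one factor of that product.

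The step I expect to be the main obstacle is to realize the prescribed \emph{unequal} split $(n-2,k)$ while simultaneously keeping $\theta_1\EFeq_\ell\theta_2$. These two demands pull in opposite directions: inflating a single cyclic skeleton by \emph{increasing} blocks only ever yields rotations, whose cycles all have equal length, so the unequal pair $(n-2,k)$ cannot come from monotone-increasing blocks, and a genuinely different wiring of $\theta_1$ is required. I would therefore present $\theta_1$ and $\theta_2$ explicitly (by their one-line words, in the spirit of \cref{Lem:KM}) as inflations built from long monotone runs, so that $\theta_1\EFeq_\ell\theta_2$ is delivered by \cref{Prop:monotone_equivalent,Prop:MagicLemma}, and verify the two product cycle-types by tracing. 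This is exactly where the congruence $n\equiv 2\ (\mathrm{mod}\ 4)$ is used: a product of two $s$-cycles is always an \emph{even} permutation, so its number of cycles is congruent to $s$ modulo $2$; the hypothesis on $n$ guarantees that the two relevant block-lengths $s$ and $s+1$ carry the two parities needed so that one product is a single cycle and the neighbouring product splits into \emph{exactly} two cycles, with no stray extra cycles appearing.

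Once the two explicit permutations are in hand, the conclusion is formal. Both $\theta_1$ and $\theta_2$ are long (their sizes are at least $n-4\ge 2^\ell$), so the blocks that were altered are $\EFeq_\ell$-equivalent, all remaining blocks coincide on the two sides, and \cref{Prop:MagicLemma} applied to the common $M$-cycle skeleton gives $\si^k_1\EFeq_\ell\si^k_2$, i.e.\ the two permutations are indistinguishable, with cycle-types $(n-2,k)$ and $(n+k-1)$ respectively, as required.
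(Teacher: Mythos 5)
Your overall strategy is exactly the paper's: build $\si^k_1$ and $\si^k_2$ as inflations of one common skeleton, differing only in the length of a single long monotone block, and conclude with \cref{Prop:monotone_equivalent} and \cref{Prop:MagicLemma}. But the proposal stops precisely at the step you yourself flag as ``the main obstacle'': you never actually exhibit $\theta_1$ and $\theta_2$ (or, equivalently, the one-line words of $\si^k_1$ and $\si^k_2$) and never verify that the resulting cycle-types are $(n-2,k)$ and $(n+k-1)$. Saying ``I would present them explicitly \dots\ and verify the two product cycle-types by tracing'' is a promissory note for the entire content of the lemma. The substitute you offer --- a parity count via the sign of a product of two long cycles --- only controls the number of cycles modulo $2$; it cannot rule out, say, three or four cycles, and it says nothing about the cycles having lengths exactly $n-2$ and $k$. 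So the congruence $n\equiv 2\pmod 4$ is invoked in the right spirit but does not, by itself, ``guarantee \dots\ no stray extra cycles appearing.''

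For comparison, the paper resolves this obstacle with a concrete two-stage construction. For $k=1$ it writes down $\si^1_1=4321[\inc_{h-1},1,\inc_{h-2},1]$ and $\si^1_2=4321[\inc_{h-1},1,\inc_{h-1},1]$ with $h=n/2$ odd, and computes the cycle decompositions explicitly (the oddness of $h$ is used to check that $\si^1_2$ is a single $n$-cycle, not merely that the cycle count has the right parity). For $k\ge 2$ it applies $k-1$ times an operator $\grow_h$ that inserts a new maximal element after $h$ in the cycle containing $h$, yielding $\si^k_i=462135[\inc_{h-1},1,\inc_{h-2 \text{ or } h-1},1,1,\inc_{k-2}]$, so that the $k$-cycle is grown out of the fixed point $h$ of $\si^1_1$ while the same operation merely lengthens the big cycle of $\si^1_2$. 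Note also that the skeleton used there ($4321$, with \emph{two} long increasing blocks interleaved) is not of the single-inflated-point form you propose, and it is not clear your $M\in\{2,3\}$ framework can realize the unequal split $(n-2,k)$ at all; in any case, that would have to be demonstrated, and it is exactly the demonstration that is missing.
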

In the above lemma, note that $(n-2,k)$ is not strictly speaking a partition, since it is not known how $k$ compares to $n-2$. 
Here and everywhere, we therefore identify a partition with all its rearrangements. 

\begin{proof}
  We first look at the case $k=1$. Set $h=n/2$, which is an odd integer. %\todo{$h$ was denoted $a$ earlier.}
  Consider the permutations $\si^1_1$ of size $n-1$ and $\si^1_2$ of size $n$ given in one-line notation by 
  \begin{align*}
    \si^1_1&= h\!+\!1\ \,\,h\!+\!2\ \,\,\cdots\ \,\,n\!-\!1\ \,\,h\ \,\,2\ \,\,3\ \,\,\cdots\ \,\,h\!-\!1\ \,\,1 ; \\
    \si^1_2&= h\!+\!2\ \,\,h\!+\!3\ \,\,\cdots\ \,\,n\ \,\, h\!+\!1\ \,\,2\ \,\,3\ \,\,\cdots\ \,\,h\ \,\,1.
  \end{align*}
  Their diagrams are represented on \cref{fig:LargeCycleEatsEverything}.
  In cyclic notation, using that $h$ is odd in the case of $\si^1_2$, we get:
  \begin{align*}
    \si^1_1&= \big(h\,\big)\, \big(1\ \,\,h\!+\!1\ \,\,2\ \,\,h\!+\!2\ \,\,3\ \,\,\cdots\ \,\,h\!-\!1\ \,\,n\!-\!1\big) ; \\
    \si^1_2&= (1\ \,\,h\!+\!2\ \,\,3\ \,\,h\!+\!4\ \,\,5\ \,\,\cdots\ \,\,n\!-\!1\ \,\,h\ \,\,h\!+\!1\ \,\,2\ \,\,h\!+\!3\ \,\,4\ \,\,\cdots\ \,\,n\!-\!2\ \,\,h\!-\!1\ \,\,n\big),
  \end{align*}
  so that $\ct(\si^1_1)=(n-2,1)$ and $\ct(\si^1_2)=(n)$ as wanted.
  Moreover we can write $\si^1_1=4321[\inc_{h-1},1,\inc_{h-2},1]$, while 
  $\si^1_2=4321[\inc_{h-1},1,\inc_{h-1},1]$. 
  \cref{Prop:MagicLemma} proves that $\si^1_1$ and $\si^1_2$ are indistinguishable as soon as $h-2 \ge 2^\ell-1$
  or equivalently $n \ge 2(2^\ell+1)$.
  \begin{figure}[t]
    \[\hspace{-3mm} \begin{tikzpicture}[scale=.6]
      %%% first \si^1_1
      \draw (0,0)--(4,0)--(4,4)--(0,4)--(0,0);
      \draw[dashed] (0,2) -- (4,2);
      \draw[dashed] (2,0) -- (2,4);
      \absdot{(2,2)}{};
      \absdot{(3.9,0.1)}{};
      \absdot{(3.7,1.8)}{};
      \absdot{(2.2,.3)}{};
      \draw[dotted] (2.2,.3) -- (3.7,1.8);
      \absdot{(.1,2.2)}{};
      \absdot{(1.8,3.9)}{};
      \draw[dotted] (.1,2.2) -- (1.8,3.9);
      \node at (2,-0.2) {$h$};
      \node at (-.2,2) {$h$};
      
      %%% now \si_1^2
      \begin{scope}[xshift=5cm]
      \draw (0,0)--(4.2,0)--(4.2,4.2)--(0,4.2)--(0,0);
      \draw[dashed] (0,2.2) -- (4.2,2.2);
      \draw[dashed] (2,0) -- (2,4.2);
      \absdot{(2,2.2)}{};
      \absdot{(4.1,0.1)}{};
      \absdot{(3.9,2)}{};
      \absdot{(2.2,.3)}{};
      \draw[dotted] (2.2,.3) -- (3.9,2);
      \absdot{(.1,2.4)}{};
      \absdot{(1.8,4.1)}{};
      \draw[dotted] (.1,2.4) -- (1.8,4.1);
      \node at (2,-0.2) {$h$};
      \node at (-.25,2.2) {$h^{\!+}$};
      \end{scope}

      %%% now \si^2_1:=\grow_a(\si^1_1)
      \begin{scope}
        [xshift=10cm]
      \draw (0,0)--(4.2,0)--(4.2,4.2)--(0,4.2)--(0,0);
      \draw[dashed] (0,2) -- (4.2,2);
      \draw[dashed] (2,0) -- (2,4.2);
      \absdot{(2,4.1)}{};
      \absdot{(4.1,2)}{};
      \absdot{(3.9,0.1)}{};
      \absdot{(3.7,1.8)}{};
      \absdot{(2.2,.3)}{};
      \draw[dotted] (2.2,.3) -- (3.7,1.8);
      \absdot{(.1,2.2)}{};
      \absdot{(1.8,3.9)}{};
      \draw[dotted] (.1,2.2) -- (1.8,3.9);
      \node at (2,-0.2) {$h$};
      \node at (-.2,2) {$h$};
      \end{scope}

      %%% now \si^3_1:=\grow_a^2(\si^1_1)
      \begin{scope}
        [xshift=15cm]
      \draw (0,0)--(4.4,0)--(4.4,4.4)--(0,4.4)--(0,0);
      \draw[dashed] (0,2) -- (4.4,2);
      \draw[dashed] (2,0) -- (2,4.4);
      \absdot{(2,4.3)}{};
      \absdot{(4.1,2)}{};
      \absdot{(4.3,4.1)}{};
      \absdot{(3.9,0.1)}{};
      \absdot{(3.7,1.8)}{};
      \absdot{(2.2,.3)}{};
      \draw[dotted] (2.2,.3) -- (3.7,1.8);
      \absdot{(.1,2.2)}{};
      \absdot{(1.8,3.9)}{};
      \draw[dotted] (.1,2.2) -- (1.8,3.9);
      \node at (2,-0.2) {$h$};
      \node at (-.2,2) {$h$};
      \end{scope}
    \end{tikzpicture} \vspace{-6mm}\]
    \caption{From left to right: the permutations $\si^1_1$, $\si^1_2$,
    $\si^2_1:=\grow_h(\si^1_1)$ and \hbox{$\si^3_1:=\grow^2_h(\si^1_1)$} (we write $h^{\!+}:=h\!+\!1$).}
    \label{fig:LargeCycleEatsEverything}
  \end{figure}
  
  For larger values of $k$, we introduce an operator $\grow_h$ on permutations:
  by definition, for a permutation $\si$ of size $M$,
  the permutation $\grow_h(\si)$ has size $M+1$
  and is obtained from the word notation of $\si$ by replacing 
  $\si(h)$ with a new maximal element $M+1$
  and by appending the former value of $\si(h)$ to the right of $\si$.
  For an example, see \cref{fig:LargeCycleEatsEverything}.
  In cycle notation, this simply amounts to saying that $\grow_h(\si)$ is 
  obtained from $\si$ by inserting $M+1$ after $h$ in the cycle containing $h$.
  In particular, the size of the cycle containing $h$ is increased by $1$,
  while sizes of other cycles are left unchanged.

  We then set $\si^k_i=\grow_h^{\,k-1}(\si^1_i)$ (for $i=1,2$).
  Because of the above discussion on the effect of the operator $\grow_h$ on the cycle structure,
  it is clear that $\ct(\si^k_1)=(n-2,k)$ and $\ct(\si^k_2)=(n+k-1)$, as wanted.
  Moreover, if $k \ge2$, then $\si^k_1=462135[\inc_{h-1},1,\inc_{h-2},1,1,\inc_{k-2}]$, while 
  $\si^k_2=462135[\inc_{h-1},1,\inc_{h-1},1,1,\inc_{k-2}]$. 
  \cref{Prop:MagicLemma} ensures that they are indistinguishable as soon as $n \ge 2(2^\ell+1)$.
\end{proof}

\begin{corollary}
  Let $\la$ be a partition, $n,k$ be integers with $k \ge 1$ and $n \ge 2^\ell+2$.
  Then $E$ contains one/all permutation(s) of cycle-type $\la \cup (n-2,k)$,
  if and only if it contains one/all permutation(s) of cycle-type $\la \cup (n+k-1)$.
  \label{Corol:LargeCycleEatsEverything}
\end{corollary}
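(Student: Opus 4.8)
The plan is to follow the exact template of the two preceding corollaries, deducing this one from \cref{Lem:LargeCycleEatsEverything} together with \cref{cor:magic}. There is, however, a mismatch to resolve first: \cref{Lem:LargeCycleEatsEverything} produces indistinguishable permutations of cycle-types $(n-2,k)$ and $(n+k-1)$ only when $n \equiv 2 \pmod{4}$ and $n \ge 2(2^\ell+1) = 2^{\ell+1}+2$, whereas the corollary asserts the equivalence under the weaker hypothesis $n \ge 2^\ell+2$, with no congruence constraint. The key idea is that a single application of \cref{Corol:N1_N2} lets us freely resize a part as long as it stays $\ge 2^\ell$, and this resizing can absorb both the congruence requirement and the stronger lower bound at once.

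Concretely, given $n \ge 2^\ell + 2$ and $k \ge 1$, I would first fix an auxiliary integer $n'$ with $n' \equiv 2 \pmod{4}$ and $n' \ge 2^{\ell+1}+2$ (for instance $n'$ of the form $4t+2$ with $t$ large). Since $n-2 \ge 2^\ell$ and $n'-2 \ge 2^\ell$, I apply \cref{Corol:N1_N2} with its partition taken to be $\la \cup (k)$ and its two part-sizes taken to be $n-2$ and $n'-2$; this gives that $E$ contains one/all permutation(s) of cycle-type $\la \cup (n-2,k)$ if and only if it contains one/all of cycle-type $\la \cup (n'-2,k)$. Here I use that partitions are multisets, so designating and resizing the single part of size $n-2$ is unambiguous even if it coincides with $k$ or with parts of $\la$.

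Now $n'$ satisfies the hypotheses of \cref{Lem:LargeCycleEatsEverything}, which yields indistinguishable permutations of cycle-types $(n'-2,k)$ and $(n'+k-1)$; feeding these into \cref{cor:magic} shows that $E$ contains one/all permutation(s) of cycle-type $\la \cup (n'-2,k)$ if and only if it contains one/all of cycle-type $\la \cup (n'+k-1)$. Finally, since both $n+k-1$ and $n'+k-1$ are at least $2^\ell$, a second application of \cref{Corol:N1_N2} --- this time keeping the partition $\la$ fixed and resizing the single part $n'+k-1$ down to $n+k-1$ --- converts $\la \cup (n'+k-1)$ back to $\la \cup (n+k-1)$. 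Chaining the three ``if and only if'' statements yields the corollary. The only real obstacle is spotting that the corollary's hypotheses are strictly weaker than the lemma's, and realising that \cref{Corol:N1_N2} is exactly the tool that simultaneously removes the congruence condition and relaxes the bound; everything after that is routine bookkeeping about partitions viewed as multisets.
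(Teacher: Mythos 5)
Your proof is correct and follows essentially the same route as the paper: the paper likewise fixes an auxiliary $n_1\equiv 2\pmod 4$ with $n_1\ge 2(2^\ell+1)$, applies \cref{Corol:N1_N2} (with partition $\la\cup(k)$) to pass from $\la\cup(n-2,k)$ to $\la\cup(n_1-2,k)$, then \cref{Lem:LargeCycleEatsEverything} with \cref{cor:magic}, then \cref{Corol:N1_N2} again to return to $\la\cup(n+k-1)$. Your $n'$ is exactly the paper's $n_1$, and your bound checks ($n-2\ge 2^\ell$ and $n+k-1\ge 2^\ell$) match what the paper uses implicitly.
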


\begin{proof}
  For $n$ satisfying $n \equiv 2\ (\!\!\!\!\mod 4)$ and $n \ge 2(2^\ell+1)$, 
  the result follows from \cref{Lem:LargeCycleEatsEverything,cor:magic}. 
  We fix such a value $n_1$ of $n$.

  For a general $n \ge 2^\ell+2$, using the above case and twice \cref{Corol:N1_N2},
  we have:\\
  $E$ contains one/all permutation(s) of cycle-type $\la \cup (n-2,k)$\\
  if and only if it contains one/all permutation(s) of cycle-type $\la \cup (n_1-2,k)$,\\
  if and only if it contains one/all permutation(s) of cycle-type $\la \cup (n_1+k-1)$,\\
  if and only if it contains one/all permutation(s) of cycle-type $\la \cup (n+k-1)$.
\end{proof}

We can now proceed to the proof of
\revision{the first main result of this section}.

\begin{proof}
  [Proof of \cref{Thm:Intersection_True_Or_False}]
Let \[M= \sum_{k=2}^{2^\ell-1} k \cdot {\tfrac{2^\ell}{k-1}}.\] 
Throughout the proof, we fix a cyclic permutation $\zeta$ of size at least $2^\ell$. 
\medskip

Recall that for a partition $\la=(\la_1, \dots, \la_q)$, we denote by $|\la| = \sum_{i=1}^q \la_i$ its size. 
In addition, we denote by $\ell(\la)=q$ its number of parts. 

{\em Claim 1.}
Let $\tau$ be a permutation having a cycle of size at least $2^\ell$.
Then $E$ contains $\tau$ if and only if it contains $\zeta$.
\smallskip

{\em Proof of Claim 1.} 
By assumption, the cycle-type of $\tau$ is $(n_2) \cup \lambda$ for some integer $n_2 \ge 2^\ell$ and some partition $\lambda$. 
From Corollaries~\ref{Corol:N1_N2} (applied with an empty $\lambda$) and~\ref{Corol:LargeCycleEatsEverything}, it follows that 
\[\begin{tabular}{l} $E$ contains $\zeta$ \\
      $\Leftrightarrow$ $E$ contains one/all permutation(s) of cycle-type $(n_2+|\la|+\ell(\la))$\\
      $\Leftrightarrow$ $E$ contains one/all permutation(s) of cycle-type $(n_2+|\la|+\ell(\la)-\la_1-1,\la_1)$\\
      $\Leftrightarrow$ $E$ contains one/all permutation(s) of cycle-type $(n_2+|\la|+\ell(\la)-\la_1-\la_2-2,\la_1,\la_2)$\\
         \dots \\
      $\Leftrightarrow$ $E$ contains one/all permutation(s) of cycle-type $(n_2) \cup \la$\\
      $\Leftrightarrow$ $E$ contains $\tau$, 
    \end{tabular}\]
proving Claim~1.
\medskip

{\em Claim 2.}
Let $\tau$ be a permutation such that there exists $k$ 
between $2$ and $2^\ell-1$ such that $\tau$ has $m$ cycles of size $k$ with $(k-1) \, m \ge 2^\ell$.
Then $E$ contains $\tau$ if and only if it contains $\zeta$.
\smallskip

{\em Proof of Claim 2.} 
By assumption, the cycle-type of $\tau$ is $(k^{m}) \cup \la$ 
for some integers $k,m$ with $(k-1) \, m \ge 2^\ell$ and some partition $\la$.
From \cref{Corol:KM}, $\tau$ is in $E$ if and only if $E$ contains one/all permutation(s) of cycle-type $\la \cup (k\, m+1)$. 
Since $k\, m +1 \geq 2^\ell$, we deduce from our first claim that this happens exactly when $E$ contains $\zeta$,
proving Claim~2. 
\medskip

\revision{We come back to the proof of the theorem.}

\begin{itemize}
  \item 
Assume first that $E$ contains $\zeta$.
We show that $E$ contains all permutations whose support has size at least $M$. 
Let $\tau$ be any permutation whose support has size at least $M$. 

\revision{We consider first the case when} $\tau$ contains a cycle of size at least $2^\ell$. Since $E$ contains $\zeta$,
it follows from our first claim that $E$ contains $\tau$. 

\revision{We now consider the case when}
$\tau$ does not contain any cycle of size at least $2^\ell$. 
Denoting $m_k$ the number of cycles of size $k$ of $\tau$, 
since the support of $\tau$ is of size at least $M$, we have $\sum_{k=2}^{2^\ell-1}k\,m_k \geq M$. 
Because of the choice of $M$, this implies the existence of a $k$ such that $m_k \ge \tfrac{2^\ell}{k-1}$. 
And because $\zeta$ belongs to $E$, it follows from Claim 2 that $\tau$ belongs to $E$. 
  \item 
Assume now that $E$ does not contain $\zeta$.
From Claims 1 and 2, we know that for any permutation $\tau$ in $E$,
\begin{itemize}
  \item $\tau$ does not have a cycle of size $2^{\ell}$ or more;
  \item for each $k$ with $2\leq k \leq 2^\ell-1$,
    the number of cycles of size $k$ in $\tau$ is less than $\tfrac{2^\ell}{k-1}$.
\end{itemize}
Then the support of any permutation $\tau$ in $E$ is smaller than $M$,
concluding the proof of \cref{Thm:Intersection_True_Or_False}.
 \qedhere 
\end{itemize}
\end{proof}

\subsection{Characterization of \inter sets}
\label{Sect:Proof_Boolean}

% The goal of this section is to prove \cref{Thm:IntersectionBoolean}.
% We first prove that, for any partition $\la$, $\CCC_\la$ and $\DDD_\la$ are \inter sets.
%and then we extend this property to all sets in the Boolean algebra they generate. 
%V: Since this is trivial, I don't see the point of announcing it.

% 
% We first need one more lemma in the spirit of the previous section.
% 
% \begin{lemma}
%   Let $k$ and $k'$ be non-negative integers with $k,k' \ge 2^\ell$.
%   There exist indistinguishable permutations $\si_1$ and $\si_2$ 
%   with $\ct(\si_1)=(1^k)$ and $\ct(\si_2)=(1^{k'})$.
%   \label{Lem:MoreOnes}
% \end{lemma}
% 
% \begin{proof}
%   Simply take the identity on $k$ and $k'$ elements, respectively.
% \end{proof}
% 
% Using \cref{cor:magic}, it follows that: 
% 
% \begin{corollary}
%   Let $\la$ be a partition and $k,k'$ be integers with $k,k' \ge 2^\ell$.
%   Then $E$ contains one/all permutation(s) of cycle-type $\la \cup (1^k)$,
%   if and only if it contains one/all permutation(s) of cycle-type $\la \cup (1^{k'})$.
%   \label{Corol:MoreOnes}
% \end{corollary}
The goal of this section is to prove \cref{Thm:IntersectionBoolean}.
Recall that $\CCC_\la$ (resp. $\DDD_\la$) denotes the set
of permutations that have cycle-type $\la$ (resp. $\la \cup (1^k)$ for some $k$).

We have seen in Section~\ref{sec:expressibilityC_laD_la} with Lemmas~\ref{prop:CLa_BiFO_TOTO} and~\ref{prop:DLa_BiFO_TOTO} 
that, for any partition $\la$, the property of belonging to $\CCC_\la$ (resp. $\DDD_\la$) is expressible in \TOTO. 
Moreover, $\CCC_\la$ is a conjugacy class, so being in  $\CCC_\la$ is clearly expressible in \TOOB. 
Finally, being in $\DDD_\la$ can be translated as follows:
% \begin{itemize}
%   \item 
  there are $|\la|$ distinct elements which forms a subpermutation of cycle-type $\la$,
%   \item 
  and all other elements are fixed points, i.e. satisfy $xRx$.
% \end{itemize}
With this formulation, it is clear that being in $\DDD_\la$ is expressible in \TOOB. 

It follows that 
\begin{lemma}
  For any partition $\la$, $\CCC_\la$ and $\DDD_\la$ are \inter sets.
  \label{Lem:CLa_DLa_BiFO}
\end{lemma}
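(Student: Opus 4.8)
The plan is to verify the two halves of the definition of a \inter set separately. The \TOTO half is already in hand: \cref{prop:CLa_BiFO_TOTO} (resp.\ \cref{prop:DLa_BiFO_TOTO}) provides a \TOTO sentence whose models are exactly the permutations of $\CCC_\la$ (resp.\ $\DDD_\la$). So the only work left is to exhibit, for each partition $\la$, a \TOOB sentence defining $\CCC_\la$ and one defining $\DDD_\la$.

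For $\CCC_\la$ I would write the cycle structure out explicitly. Writing $\la=(\la_1,\dots,\la_q)$, I introduce $|\la|$ existentially quantified variables grouped into $q$ blocks of sizes $\la_1,\dots,\la_q$. The sentence asserts that all these variables are pairwise distinct, that within the $j$-th block the variables are cyclically related by $R$ (so the block is forced to be an exact $\la_j$-cycle, since $R$ is a bijection), and finally, via a universal quantifier, that every element of the domain equals one of these $|\la|$ variables. This last clause pins the size down to exactly $|\la|$, so the models are precisely the permutations whose cycle decomposition consists of the prescribed disjoint cycles, i.e.\ exactly $\CCC_\la$. This is just an explicit witness for the fact, recorded after \cref{prop:sameTOOB_conjugatePerm}, that every conjugacy class is \TOOB-definable.

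For $\DDD_\la$ I would reuse the same existential part describing $q$ disjoint cycles of sizes $\la_1,\dots,\la_q$, but replace the size-fixing universal clause by the weaker clause stating that every element of the domain is either one of the $|\la|$ chosen variables or a fixed point (i.e.\ satisfies $xRx$). A model of this sentence is a permutation in which the chosen $|\la|$ elements are closed under $R$ and carry cycle-type $\la$, while every remaining element is a fixed point; such permutations are exactly those of cycle-type $\la\cup(1^k)$ for some $k\ge 0$, that is, $\DDD_\la$. Conversely, any permutation of cycle-type $\la\cup(1^k)$ satisfies the sentence by choosing the $|\la|$ variables to witness the cycles of $\la$ (a part of size $1$ being witnessed by a fixed point), leaving the remaining $k$ fixed points to be absorbed by the universal clause.

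There is no genuine obstacle here; the two points needing a little care are routine. First, one checks that a cyclic chain of $R$-relations among distinct variables really does force an exact cycle of that length — this is immediate because $R$ is a bijection, so each variable already has a unique image and preimage. Second, one must treat parts of $\la$ equal to $1$ uniformly, as clauses $xRx$, and verify that the fixed-point bookkeeping in the $\DDD_\la$ case matches $\DDD_\la=\biguplus_{k\ge 0}\CCC_{\la\cup(1^k)}$. Pairing the \TOTO sentences from \cref{prop:CLa_BiFO_TOTO,prop:DLa_BiFO_TOTO} with these two \TOOB sentences then shows that $\CCC_\la$ and $\DDD_\la$ are \inter sets, as claimed.
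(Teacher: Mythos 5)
Your proposal is correct and follows essentially the same route as the paper: the \TOTO half is delegated to \cref{prop:CLa_BiFO_TOTO,prop:DLa_BiFO_TOTO}, and the \TOOB half is handled by describing the prescribed cycles explicitly with $|\la|$ existential variables, adding either a size-fixing universal clause (for $\CCC_\la$) or a ``everything else is a fixed point'' clause (for $\DDD_\la$). You merely spell out the \TOOB sentences in more detail than the paper, which simply asserts they are ``clearly expressible''.
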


\begin{proof}[Proof of \cref{Thm:IntersectionBoolean}]
Call $\BA$ the Boolean algebra generated by
the $\CCC_\la$'s and the $\DDD_\la$'s. 
That the elements of $\BA$ are \inter sets is easy. 
It follows from \cref{Lem:CLa_DLa_BiFO}, since the family of \inter sets is closed by taking finite unions, finite intersections and complements

\medskip

Conversely, consider a \inter set $E$.
We want to prove that $E$ is in $\BA$.
By assumption,
there exists a \TOTO sentence,
whose quantifier depth we denote by $\ell$, of which $E$ is the set of models.

From \cref{Thm:Intersection_True_Or_False},
possibly replacing $E$ by its complement, we can assume that 
there is a bound $M$ on the size of the support of permutations in $E$.
In other words the cycle-type of a permutation in $E$ writes 
$\mu \cup (1^k)$ with $|\mu| \le M$ and $k \ge 0$.
From \cref{prop:sameTOOB_conjugatePerm}, two conjugate permutations are either both in $E$
or both outside $E$, so that $E$ is a (disjoint) union of conjugacy classes:
\begin{equation}
  E = \biguplus_{\la \in \Lambda} \CCC_\la, 
  \label{Eq:E_BigUplus}
\end{equation}
for some subset $\La$ of $\{\mu \cup (1^k), |\mu| \le M \text{ and }k \ge 0\}$.
Note that \cref{Eq:E_BigUplus} does not prove that $E$ is in $\BA$,
since the union may be infinite.

Observe that the decomposition $\la=\mu \cup (1^k)$ is unique if
we require that $\mu$ has no parts equal to $1$ (which we write as $m_1(\mu)=0$).
Thus \cref{Eq:E_BigUplus} can be rewritten as
\begin{equation}
  E= \biguplus_{\genfrac{}{}{0pt}{}{\mu \text{ s.t.}}{|\mu| \le M,\, m_1(\mu)=0}} E_\mu, 
\text{ where }
E_\mu:= \biguplus_{\genfrac{}{}{0pt}{}{k \ge 0}{\mu \cup (1^k) \in \Lambda}}  \CCC_{\mu \cup (1^k)}. 
% \label{Eq:E_BigUplus_Finite}
\end{equation}
The set of partitions $\mu$ with $|\mu| \le M$ is finite,
so that it is enough to prove that each $E_\mu$ lies in $\BA$.
For this, it is enough that $I_\mu = \{k \ge 0 \text{ s.t. } \mu \cup (1^k) \in \Lambda\}$ is finite or co-finite 
(then giving that $E_\mu$ is a finite union of $\CCC_{\mu \cup (1^k)}$ 
or the set-difference of $\DDD_\mu$ with a finite union of $\CCC_{\mu \cup (1^k)}$). 

\medskip

We claim that if $k,k'\geq 2^\ell$, then $k \in I_\mu$ if and only if $k' \in I_\mu$. 
Proving this claim will conclude our proof of Theorem~\ref{Thm:IntersectionBoolean}. 
Recall that by definition, $k \in I_\mu$ if and only if
$E$ contains one/all permutation(s) of cycle-type $\mu \cup (1^k)$. 
We know (see \cref{Prop:monotone_equivalent}) that the identity permutations of size $k$ and $k'$,
whose cycle-types are  $(1^k)$ and $(1^{k'})$ respectively,
are $\ell$-indistinguishable. 
From \cref{cor:magic}, there exist $\ell$-indistinguishable permutations $\si$ and $\si'$
with cycle-type $\mu \cup (1^k)$ and $\mu \cup (1^{k'})$, respectively.
Then $\si$ is in $E$ if and only if $\si'$ is in $E$
proving that $k \in I_\mu$ if and only if $k' \in I_\mu$.
% Earlier presentation of the end of the proof
% 
% Fix $\mu$ as above ($|\mu| \le M,\, m_1(\mu)=0$).
% Take two permutations $\si$ and $\si'$ of cycle-types $\mu \cup (1^k)$ and $\mu \cup (1^{k'})$,
% respectively, where $k,k' \ge 2^\ell$.
% From \cref{Corol:MoreOnes}, 
% $\si$ and $\si'$ are either both in $E$ or both outside $E$.
% 
% Since this is true for any $k,k' \ge \ell$,
% we conclude that either all permutations of cycle-type $\mu \cup (1^k)$ with $k \ge \ell$
% are in $E$ or none of them belong to $E$
% ($\mu$ is fixed here, only $k$ varies).
% In the first case, we have
% \[E_\mu = \DDD_\mu\, \setminus\, \left( \bigcup_{k \in I} \CCC_{\mu \cup (1^k)} \right),\]
% for some subset $I$ of $[\ell]$. In the second case,
% \[E_\mu = \bigcup_{k \in I} \CCC_{\mu \cup (1^k)},\]
% for some subset $I$ of $[\ell]$.
% In both cases, $I$ must be finite since it is a subset of the finite set $[\ell]$
% and we conclude that $E_\mu$ is in $\BA$.
% 
% Formula \eqref{Eq:E_BigUplus_Finite} implies that $E$ is also in $\BA$,
% completing the proof of \cref{Thm:IntersectionBoolean}.
\end{proof}

\section*{Acknowledgements}
The authors are thankful to Marc Noy for insightful discussions on the topic.
In particular, VF's first contact with first-order logic 
and expressibility questions was through a talk given by Marc Noy
on logic and random graphs in the workshop
``Recent Trends in Algebraic and Geometric Combinatorics'' in Madrid 
in November '13.

MB is partially supported by the Swiss National Science Foundation, under grant number 200021-172536.

\bibliographystyle{plain}
\bibliography{fo}

MA: Department of Computer Science, University of Otago, Owheo Building, 133 Union Street East, Dunedin 9016, New Zealand.\\
malbert@cs.otago.ac.nz

MB, VF: Institut f\"ur Mathematik, Universit\"at Z\"urich, Winterthurerstrasse 190, 8057 Z\"urich, Switzerland.\\
$\{$mathilde.bouvel,valentin.feray$\}$@math.uzh.ch
\end{document}